\documentclass[11pt,reqno]{amsart}
\usepackage{tikz}
\textheight    23cm
\textwidth     15.cm
\addtolength{\textheight}{-0.75in}
\oddsidemargin   .4cm
\evensidemargin  .4cm
\parskip 6pt
\usepackage{epstopdf}
\usepackage{epsfig}
\usepackage{caption}
\usepackage{subcaption}
\usepackage{math}
\usepackage{tikz}
\usepackage{tikz-cd}
\usetikzlibrary{shapes,arrows}
\usepackage{cleveref}
\usepackage{algorithm}
\usepackage{algorithmic}
\tikzstyle{decision} = [diamond, draw, fill=blue!20, 
    text width=4.5em, text badly centered, node distance=3cm, inner sep=0pt]
\tikzstyle{block} = [rectangle, draw, fill=blue!20, 
    text width=5em, text centered, rounded corners, minimum height=4em]
\tikzstyle{line} = [draw, -latex']
\tikzstyle{cloud} = [draw, ellipse,fill=red!20, node distance=3cm,
    minimum height=2em]   
\usetikzlibrary{positioning}
\tikzset{main node/.style={circle,fill=blue!20,draw,minimum size=1cm,inner sep=0pt},  }

\newcommand{\ts}{\mathsf{T}}
\newcommand{\NT}{\mathrm{W}}
\newcommand{\JX}[1]{{\color{blue}{$^{\text{JX}}$[#1]}}}

\newcommand{\xz}[1]{{\color{red}{$^{\text{XZ}}$[#1]}}}

\begin{document}
\title[Numerical analysis on Neural projected dynamics]{Numerical analysis on Neural network projected schemes for approximating one dimensional Wasserstein Gradient flows}
\author[Zuo]{Xinzhe Zuo}
\email{zxz@math.ucla.edu}
\address{Department of Mathematics, University of California, Los Angeles, CA, 90095.}
\author[Zhao]{Jiaxi Zhao}
\email{jiaxi.zhao@u.nus.edu}
\address{Department of Mathematics, National University of Singapore.}
\author[Liu]{Shu Liu}
\email{shuliu@math.ucla.edu}
\address{Department of Mathematics, University of California, Los Angeles, CA, 90095.}
\author[Osher]{Stanley Osher}
\email{sjo@math.ucla.edu}
\address{Department of Mathematics, University of California, Los Angeles, CA, 90095.}
\author[Li]{Wuchen Li}
\email{wuchen@mailbox.sc.edu}
\address{Department of Mathematics, University of South Carolina, Columbia, SC, 29208.}
\begin{abstract}
We provide a numerical analysis and computation of neural network projected schemes for approximating one dimensional Wasserstein gradient flows. We approximate the Lagrangian mapping functions of gradient flows by the class of two-layer neural network functions with ReLU (rectified linear unit) activation functions. The numerical scheme is based on a projected gradient method, namely the Wasserstein natural gradient, where the projection is constructed from the $L^2$ mapping spaces onto the neural network parameterized mapping space. We establish theoretical guarantees for the performance of the neural projected dynamics. We derive a closed-form update for the scheme with well-posedness and explicit consistency guarantee for a particular choice of network structure. General truncation error analysis is also established on the basis of the projective nature of the dynamics. Numerical examples, including gradient drift Fokker-Planck equations, porous medium equations, and Keller-Segel models, verify the accuracy and effectiveness of the proposed neural projected algorithm. 
\end{abstract}
\thanks{ Xinzhe Zuo and Jiaxi Zhao contributed equally. Jiaxi Zhao, Xinzhe Zuo, and Shu Liu are partially supported by AFOSR YIP award No. FA9550-23-1-008; Xinzhe Zuo, Shu Liu, and Stanley Osher are partially funded by AFOSR MURI FA9550-18-502 and ONR N00014-20-1-2787; Wuchen Li’s work is partially supported by AFOSR YIP award No. FA9550-23-1-008, NSF DMS-2245097, and NSF RTG: 2038080.}
\keywords{Optimal transport; Information Geometry; Natural gradient; Neural network functions; Convergence analysis.}
\maketitle

\section{Introduction}
Simulating gradient flows of free energies is a central problem in the computational physics of complex systems \cite{carrillo2021lagrangian} and data science \cite{amari1998natural,amari2016information}. In physics, gradient flows often arise from first-order principles, such as the Onsager principle \cite{Ons}. The Onsager gradient flows are widely used in phase fields, chemistry, and biology modeling. In recent years, a particular type of Onsager gradient flow, known as Wasserstein gradient flow, has been widely studied in optimal transport communities \cite{ambrosio2005gradient,otto2001,vil2008}. It studies an infinite-dimensional pseudo-Riemannian metric in the probability distribution space known as the density manifold. The gradient flow in the Wasserstein space naturally captures the free energy dissipation properties. Depending on the choices of free energies, the Wasserstein gradient flow contains a vast class of differential equations, such as gradient drift Fokker-Planck equations, porous medium equations, and Keller-Segel models. These models are widely used in population dynamics and sampling-related optimization problems. 

In recent years, machine learning has brought a class of new methods in computational physics, where free energies are identified with the loss functions \cite{chu2020equivalence, nusken2023geometry}. Meanwhile, computing Wasserstein gradient flows of loss functions in terms of samples also finds their various applications, such as generative artificial intelligence \cite{pmlr-v70-arjovsky17a} and transport map-based sampling methods \cite{DBLP:conf/iclr/0011SKKEP21}. In these applications, one often relies on the Lagrangian mapping functions to describe the Wasserstein gradient flows and deep neural networks to approximate the mapping functions due to their high expressivity and adaptivity from the compositional structure. While empirical successes of this framework have been observed in various applications \cite {DBLP:conf/iclr/0011SKKEP21, pmlr-v70-arjovsky17a}, very few theoretical results exist to explain the underlying mechanism.

Moreover, projected dynamics in neural network space are widely used to approximate Wasserstein gradient flows \cite{gaby2023neural,doi:10.1137/20M1344986}. These dynamics restrict the space of probabilities onto a finite-dimensional subspace parameterized by neural network mapping functions. For this reason, we call it the neural projected gradient dynamics. This approach originates from the natural gradient method in information geometry \cite{amari1998natural} and extends the framework set by \cite{li2018natural}. Some basic questions about its accuracy and efficiency remain: \textit{Even in one-dimensional space, how well do the neural projected dynamics approximate the Wasserstein gradient flow? What is the accuracy of the neural network approximation in Lagrangian mapping functions?} 

In this paper, we study the numerical analysis and computational neural network projected schemes for one-dimensional Wasserstein gradient flows. 
The main result is sketched below. By formulating gradient flows in Lagrangian coordinates, the proposed numerical scheme takes the form of a `preconditioned' gradient descent, where the preconditioner is the metric tensor of the statistical manifold of the parameter space. Theoretically, we first provide the derivation of the analytic solution for the inverse neural mapping metric. It is based on a special class of the ReLU network in \cref{thm:ReLU-inverse-metric}. We use the analytic form of the projected gradient flow formula to prove the consistency of the numerical scheme. Then, we prove in \cref{thm:consistent} that the numerical schemes derived from the neural projected dynamics are of first or second-order consistency for the general Wasserstein gradient directions. These include cases of the heat flow and the Fokker-Planck equation. Furthermore, viewing our neural network model as a moving mesh method, we show in \cref{prop:mesh-quality-analysis} that the mesh will not degenerate during the simulation. 

In numerics, the advantages of the proposed method are twofold. First, using a two-layer neural network as our basis function, the proposed method can be regarded as a `moving-mesh' method in Lagrangian coordinate, which demonstrates very promising performance even when the number of parameters of the neural network is very limited. In particular, our numerical examples can achieve an accuracy of $10^{-3}$ with less than 100 neurons. Second, using the Wasserstein gradient flow formulation, the proposed method is very easy to implement since it can make use of the automatic differentiation feature from popular machine learning libraries such as PyTorch.

Nowadays, the computation of Wasserstein gradient flows (WGFs) has attracted great interests from researchers in various communities such as mathematics, physics, statistics, and machine learning.  Classical numerical methods \cite{chang1970practical} have been introduced to directly evaluate the probability density function. Recently, algorithms that approximate the Lagrangian mapping functions associated with WGFs have been invented. We refer the readers to \cite{carrillo2021lagrangian} and references therein for related discussions. These treatments automatically preserve non-negativity and total mass. Together with the fast-developing deep learning techniques, they inspire a series of research on composing scalable, sampling-friendly computational methods for WGFs in higher-dimensional spaces \cite{doi:10.1137/20M1344986, mokrov2021large, fan2021variational, hu2022energetic, lee2023deep}. Recently, deep learning-based algorithms for computing the Lagrangian coordinates of the Wasserstein Hamiltonian flows, or more generally mean field control problems, have also been introduced in \cite{wu2023parameterized, neklyudov2023computational,ruthotto2020machine}.

Our treatment of projecting the WGFs onto the parameter space is also known as the natural gradient method, which are first introduced in \cite{amari1998natural} (w.r.t. Fisher-Rao metric) and \cite{chen2020optimal} (w.r.t. Wasserstein metric). Here the projected matrix is often named information matrix, namely Fisher information matrix and Wasserstein information matrix, depending on the usage of metrics in probability space. This method recently finds its application in large-scale optimization problems \cite{nurbekyan2023efficient}. In recent research \cite{du2021evolutional, bruna2024neural,gaby2023neural}, the authors aim to calculate general evolution equations by directly leveraging the neural network representation of the time-dependent solution. They endow the evolution of the equation in the functional space into the parameter space of the neural network to obtain a finite-dimensional ordinary differential equation, which can be readily integrated via the Runge-Kutta solvers. Numerical properties of the ReLU neural network families have been investigated in \cite{sci_2020}. 

Compared to previous studies, we study the numerical analysis of neural network projected dynamics for approximating WGFs. In one-dimensional space, we provide the error analysis for the neural projected dynamics with a two-layer neural network. We numerically verify the proposed error analysis. In particular, we formulate a class of explicit schemes from the neural network projected dynamics. This study continues the study of the Wasserstein information matrix on neural network models; see related discussions in \cite{li2023wasserstein, li2023scaling, doi:10.1137/20M1344986}.

The paper is organized as follows. In Section \ref{sec2}, we briefly review the formulation of Wasserstein gradient flows of free energies in both Eulerian and Lagrangian coordinates. We formulate the projected Wasserstein gradient flows over neural network models in Section \ref{sec3}. In Section \ref{sec4}, we conduct the numerical analysis of the proposed neural projected dynamics in two-layer neural network functions. In Section \ref{sec5}, we verify the accuracy of the proposed algorithm with numerical examples in Fokker-Planck equations, porous medium equations, and Keller-Segel models. 

\section{Review of Wasserstein gradient flows and Lagrangian coordinates}\label{sec2}
In this section, we prepare the theoretical foundations of Wasserstein gradient flows with a focus on Lagrangian description (diffeomorphism mapping functions) and the associated microscopic particle dynamics. See details in \cite{ambrosio2005gradient,vil2008}. 

\subsection{Wasserstein gradient flows}\label{subsection: intro WGF }
Suppose $\Omega$ is a domain in the Euclidean space $\mathbb{R}^d$. Denote the probability space
\[ \mathcal{P}(\Omega) = \left\{p(\cdot)\in C^{\infty}:~ \int_\Omega p(x)dx = 1, \quad p(\cdot)\geq 0\right\}.\]
Given an energy functional $\mathcal{F}(\cdot):\Omega\rightarrow \mathbb{R}$, we consider the following evolution equation associated with $\mathcal{F}(\cdot)$, 
\begin{equation}
\partial_t p(t,x)=\nabla_x\cdot(p(t,x)\nabla_x\frac{\delta}{\delta p}\mathcal{F}(p)), \quad p(\cdot, 0) = p_0,  \label{WGF}
\end{equation}
with Neumann boundary condition $p(t, x)\nabla_x\frac{\delta}{\delta p}\mathcal F(p)\cdot \boldsymbol{n} = 0$ where $\boldsymbol{n}$ is the outward pointing vector on boundary $\partial \Omega$. $\frac{\delta}{\delta p}$ is the $L^2$ first variation operator w.r.t. density variable $p$. The mass of $p(t, \cdot)$ is conserved and always equals $1$.
An important fact about \eqref{WGF} is that this equation can be treated as the gradient flow of $\mathcal{F}$ on $\mathcal{P}(\Omega)$. To be more specific, by endowing the probability space $\mathcal P(\Omega)$ with the $L^2$ Wasserstein metric $g_W$, we can view $(\mathcal P(\Omega), g_W)$ as a Riemannian manifold, and \eqref{WGF} is the gradient flow on such manifold with respect to $g_W$. 

Let us briefly review several facts. We first define the metric $g_W$ at arbitrary $p\in\mathcal P(\Omega)$, which is identified via the continuity equation (that is, tangent vectors) whose driving vector field belongs to the closure of all gradient fields $\nabla_x\psi:\Omega\rightarrow \mathbb{R}^d$ with $\psi\in C^\infty(\Omega)$ in $L^2(p)$-norm. Consider a smooth curve $\{p_i(t,\cdot)\}_{t\in (-\epsilon, \epsilon)}$ ($i=1,2$) passing through $p$ at $t=0$ on $\mathcal P(\Omega)$. Suppose the probability evolution $p_i(t, \cdot)$ is driven by the gradient field $\nabla_x\psi_i(\cdot)$ at $t=0$, i.e., $\psi_i(\cdot)$ solves
\[ \partial_t p_i (0, x) + \nabla_x\cdot(p_i(0, x)\nabla\psi_i(x)) = 0, \quad i=1,2.\]
We define the $L^2$ Wasserstein metric $g_W(\cdot, \cdot)$ at $p$ as a symmetric, positive-definite bilinear form, 
\[ g_W(\partial_t p_1(0,\cdot), \partial_t p_2(0,\cdot)) = \int_\Omega \nabla_x\psi(x) \cdot \nabla_x \psi_2(x) p(x)~dx. \]

Recall the definition of the gradient of a smooth function $f$ on a Riemannian manifold $(M, g)$ as
\[ g(\mathrm{grad} f(x), \dot x(0)) = \frac{d}{dt} f(x(t)), \]
for any smooth curves $\{x(t)\}_{}t\in(-\epsilon, \epsilon)$ passing through $x$ at $t=0$. Switching back to our case, for the functional $\mathcal F$ defined on $(\mathcal P(\Omega), g_W)$, we define the gradient of $\mathcal F$ w.r.t. Wasserstein metric $g_W$ at $p$ as
\[ g_W(\textrm{grad}_W\mathcal F(p), \partial_tp(0, \cdot)) = \frac{d}{dt}\mathcal F(p(t, \cdot))\Bigg|_{t=0}. \]
Here $\{ p(t, \cdot)\}_{t\in(-\epsilon, \epsilon)}$ is arbitrary curve on $\mathcal P(\Omega)$ with $p(0, \cdot)=p(\cdot)$. Suppose $ p(t, \cdot)$ is guided by the gradient field $\nabla_x\psi$ at time $t=0,$ Then the right-hand side can be computed as
\begin{align*} 
  \frac{d}{dt} \mathcal F(p(t, \cdot)) = \int_\Omega  \frac{\delta \mathcal{F}( p(0, \cdot))}{\delta p}(x) \partial_t p(0, x)~dx & = \int_\Omega \frac{\delta \mathcal{F}(p)}{\delta p}(x) (-\nabla_x\cdot(p(x)\nabla_x\psi(x))) ~ dx \\
  & = \int_\Omega \nabla \frac{\delta \mathcal{F}(p)}{\delta p}(x) \cdot \nabla_x \psi(x) p(x)dx. 
\end{align*}
Recall the definition of the metric $g_W$, it is not difficult to verify that the gradient field associated with $\mathrm{grad}_W \mathcal F(p)$ is $\nabla_x \frac{\delta}{\delta p}\mathcal F(p)$. Thus, 
\[ \mathrm{grad}_W \mathcal F(p) = -\nabla_x \cdot(p(t, x)\nabla_x \frac{\delta \mathcal F(p)}{\delta p}(x)), \]
and the Wasserstein gradient flow $\partial_t p = -\mathrm{grad}_W \mathcal F(p)$ can be formulated as equation \eqref{WGF}.

We provide several examples of WGFs. In these examples, we assume $\Omega = \mathbb{R}^d$.
\begin{itemize}[leftmargin=*]
\item (Fokker-Planck equation) Consider
\begin{align*}
  \mathcal F(p) = \int_\Omega V(x) p(x)dx + \gamma \int_\Omega  p(x)\log p(x)dx.
\end{align*}
Then the Wasserstein gradient of $\mathcal F$ equals
\begin{equation*}
\begin{split}
     \mathrm{grad}_W \mathcal F(p) =& -\nabla_x \cdot(p(x)\nabla_x (V(x) + \gamma(\log p(x)+1)))\\ 
     =& -\nabla\cdot(p(x)\nabla_x V(x)) - \gamma \Delta_x p(x). 
\end{split}
     \end{equation*} 
The corresponding WGF is the Fokker-Planck equation
\begin{equation}
  \partial_t p(t,x) = \nabla_x \cdot(p(t, x) \nabla_x V(x)) + \gamma\Delta_x  p(t, x).\label{Fokker-Planck equ }
\end{equation}

\item (Porous medium equation) Consider
\begin{equation*}
  \mathcal F(p) = \frac{p^m}{m-1}.
\end{equation*}
One computes 
\[ \mathrm{grad}_W \mathcal F(p) = -\nabla_x \cdot (p(t, x) \nabla_x ( \frac{m}{m-1} p(x)^{m-1} )) = -\nabla_x\cdot (\nabla_x(p(x)^m)) = -\Delta_x  p(x)^m. \]
Thus, the corresponding WGF yields the porous medium equation
\begin{equation}
    \partial_t p(t, x) = \Delta_x p(t, x)^m.  \label{Porous medium equ }
\end{equation}

\item (Keller-Segel equation) Another well-known WGF is by choosing $\mathcal{F}$ as the sum of the internal energy and the interaction energy
\begin{equation*}
  \mathcal F(p) = \int_\Omega U(p(x))~dx + \frac12\iint_{\Omega \times \Omega } W(|x-y|)p(x)p(y)~dxdy,
\end{equation*}
where $U$ is a certain smooth function defined on $\mathbb{R}_{+}$, and $W(\cdot)\in C(\mathbb{R}_+;\mathbb{R})$ is a kernel function. 

We calculate
\[ \mathrm{grad}_W\mathcal F(p) = -\nabla_x \cdot(p(x)\nabla_x( U'(p(x)) + W*p(x) )), \]
where we denote the convolution $W*p(x) = \int_{\Omega} W(|x-y|)p(y)~dy$. The WGF associated with this functional is the Keller-Segel equation
\begin{equation}
  \partial_t p(t, x) = \nabla_x\cdot(p(t, x) \nabla_x U'(p(t, x))) + \nabla_x\cdot(p(t, x)\nabla_x(W*p_t(x))).    \label{Keller-Segel equ }
\end{equation}

\end{itemize}

\subsection{Lagrangian coordinates \& Particle dynamics}
Consider a mapping function $T\colon Z\rightarrow \Omega$. Here $z\in Z$ is an input space, $\Omega\subset\mathbb{R}^d$ is the domain on which WGF is defined. To alleviate our discussion, we assume $Z=\Omega$. Let us further assume $T\in C^\infty(Z,\Omega)$, and the Jacobian matrix $D_zT(z)$ is non-singular for all $z\in Z$, i.e., $\mathrm{det}(D_zT(z))\neq 0$ on $Z$. This also guarantees that $T$ is injective. Given a smooth reference probability density $p_{\mathrm{r}}\in \mathcal{P}(Z)$, we denote the pushforwarded probability density of $p_r$ by $T$ as 
\begin{equation*}
p=T_\# p_r, 
\end{equation*}
where $T_\#:\mathcal P(Z) \rightarrow \mathcal P(\Omega)$ is the pushforward operator defined as
\[ \int_{\Omega} f(x) T_\# p_r(x) ~dx = \int_{  Z  } f(T(z)) p_r(z)~dz, \quad \textrm{for all } f\circ T \in L^1(p_r). \]
The density function of $p$ satisfies
\begin{equation}\label{MA}
  p(T(z))\mathrm{det}(D_zT(z))=p_{\mathrm{r}}(z). \quad \forall~ z\in Z. \quad \textrm{i.e.,} ~~ p(x) = \frac{p_r}{\mathrm{det}(D_z T)}\circ T^{-1}(x) \quad \forall ~ x\in\Omega. 
\end{equation}
Such pushforward map $T$ used for constructing probability distribution $p$ is usually called the \textit{Lagrangian coordinate}. We now imitate the derivation of the WGF to help formulate its counterpart under the Lagrangian coordinate.

We denote $\mathcal O$ as the space of smooth, $L^2(p_r)$ integrable pushforward maps with non-zeros Jacobian, i.e.,
\begin{equation*}
  \mathcal O = \left\{T\in C^\infty(Z, \Omega) ~ : ~ \mathrm{det}(D_zT)\neq 0, ~~ \int_Z |T(z)|^2p_z(z)~dz<\infty \right\}.
\end{equation*}
Then the pushforward operation $\#:\mathcal O \rightarrow \mathcal P(\Omega)$ introduces a submersion from the space of pushforward maps (diffeomorphisms) to the space of probability densities. 

In order to derive the Wasserstein gradient flows (WGFs) on the space $\mathcal O$ of pushforward maps instead of the probability space $\mathcal P(\Omega)$, we first build up certain metric $\langle\cdot,\cdot\rangle$ on $\mathcal{O}$ that corresponds to the Wasserstein metric $g_W$. As illustrated in \cite{otto2001}, $g_W$ is obtained by pulling back the $L^2(p_r)$ norm on $\mathcal O$ via submersion $\#$. Thus, a way of choosing the metric is
\begin{equation*} 
  \langle \mathbf{u}_1, \mathbf{u}_2 \rangle = \int_Z \mathbf{u}_1(z)\cdot\mathbf{u}_2(z)p_r(z)~dz, \quad \forall ~ \mathbf{u}_1, \mathbf{u}_2\in L^2(p_r)~\bigcap ~ C^\infty(Z , \Omega).
\end{equation*}

Now for any smooth functional $\mathcal F:\mathcal P(\Omega)\rightarrow\mathbb{R}$, the composition $\mathcal F^{\#} 
\triangleq \mathcal F\circ \# :\mathcal O \rightarrow \mathbb R$ defines its corresponding functional on $\mathcal O$. Follow similar arguments presented in \ref{subsection: intro WGF }, we compute the gradient of $\mathcal F^\#$ with respect to the metric $\langle \cdot ,  \cdot \rangle$ as
\[ \mathrm{grad}_{\langle\cdot, \cdot\rangle}\mathcal F^\#(T) = \frac{1}{p_{\mathrm{r}}(\cdot)}\frac{\delta \mathcal{F}^\#(T)}{\delta T}(\cdot). \]
Here, $\frac{\delta}{\delta T}$ is the $L^2(m)$ ($m$ denotes the Lebesgue measure) first variational w.r.t. the pushforward map $T$.

Thus, the gradient flow of $\mathcal{F}^\#$ on $\mathcal O$ is formulated as
\begin{equation*}
  \partial_t T(t, \cdot) = -\mathrm{grad}_{\langle\cdot, \cdot\rangle}\mathcal F^\#(T(t, \cdot)) = - \frac{1}{p_{\mathrm{r}}(\cdot)}\frac{\delta \mathcal{F}^\#(T(t, \cdot))}{\delta T}(\cdot).
\end{equation*}
The variation $\frac{\delta}{\delta T}$ is calculated as
\[ \frac{\delta \mathcal{F}^\#( T )}{\delta T}(z) = \left( \nabla_x \frac{\delta \mathcal{F}(T_\# p_r)}{\delta p}\right)\circ T(z) p_r(z). \]
The above equation can also be written as
\begin{equation}
  \partial_t T(t, z) = - \left( \nabla_x \frac{\delta \mathcal{F}(T(t,\cdot)_\# p_r)}{\delta p}\right)\circ T(t, z).  \label{Lagrangian GF}
\end{equation}
If we denote $p(t, \cdot) = T(t, \cdot)_\# p_r$, one can verify that $p(t, \cdot)$ exactly solves equation \eqref{WGF} for WGF with $p_0=T(0, \cdot)_\# p_r$, which justifies the equivalence between the gradient flow \eqref{Lagrangian GF} in Lagrangian coordinates (i.e., the map $T(t, \cdot)$) and the WGF \eqref{WGF} expressed by using Eulerian coordinate (i.e., the density function $p(t, \cdot)$).

Such gradient flow \eqref{Lagrangian GF} on the space of diffeomorphisms also forms a microscopic picture of particle dynamics of the WGF \eqref{WGF}. For any random reference sample $z\sim p_r$, by setting $\mathbf{x}_t = T(t, z)$, it is not hard to verify that $\mathbf{x}_t$ evolves w.r.t. the dynamic
\begin{equation} 
  \frac{d\mathbf{x}_t}{dt} = -\left(\nabla_x \frac{\delta}{\delta p} \mathcal F( p_t)\right)(\mathbf{x}_t), \quad \mathbf{x}_0=T(0, z), \quad z\sim p_r.  \label{particle dynamic WGF}   
\end{equation}
Here we denote $ p_t = T(t, \cdot)_\# p_r$. $ p_t$ can be equivalently treated as the probability density of the random particle $\mathbf{x}_t$. In this dynamic, the movement of a single agent $\mathbf{x}_t$ is determined by the instant population density $ p_t$ evaluated at $\mathbf{x}_t$. Such an approach offers a microscopic and deterministic interpretation of various diffusive processes possessing WGF structures.

The aforementioned examples of WGF can be formulated as the gradient flows under Lagrangian coordinates \eqref{Lagrangian GF} as well as the particle dynamics \eqref{particle dynamic WGF}. We summarize this in the following Table \ref{tab: Lagrangian coordinates GF particle dynamic }. We assume $T(0, \cdot)_\#p_r = p_0$ as the initial condition for \eqref{Lagrangian GF}, and $\mathbf{x}_0\sim p_0$ as the initial distribution of the random particle $\mathbf{x}_t$ in \eqref{particle dynamic WGF}. We denote $ p_t=T(t, \cdot)_\#p_r$ in equation \eqref{Lagrangian GF}. Accordingly, we denote $ p_t$ as the probability density of the stochastic particle $\mathbf{x}_t$ in the dynamic \eqref{particle dynamic WGF}.

\begingroup

\setlength{\tabcolsep}{10pt} 
\renewcommand{\arraystretch}{1.5} 
\begin{table}[htb!]
    \centering
    \begin{tabular}{c|c}
    \hline
       WGF  & \begin{tabular}{@{}c@{}}
       Gradient flow in Lagrangian coordinates \\  
       Particle dynamic  
       \end{tabular}  \\
         \hline\hline
       Fokker-Planck \eqref{Fokker-Planck equ }  & 
       \begin{tabular}{@{}c@{}}
       $\partial_t T(t, z) = - \nabla_x (V + \gamma \log  p_t)\circ T(t, z)$  \\  
       $\frac{d\mathbf{x}_t}{dt} = - \nabla_x V(\mathbf{x}_t) -\gamma \nabla_x\log  p_t(\mathbf{x}_t)$ 
       \end{tabular} \\
       \hline
       Porous-medium \eqref{Porous medium equ }  & 
       \begin{tabular}{@{}c@{}}
       $\partial_t T(t, z) = -\frac{m}{m-1}  p_t(T(t, z))^{m-1}\nabla_x p_t\circ T(t, z)$  \\  
       $\frac{d\mathbf{x}_t}{dt}  =  -  \frac{m}{m-1}  p_t(\mathbf{x}_t)^{m-1}\nabla p_t(\mathbf{x}_t)$ 
       \end{tabular} \\
       \hline
       Keller-Segel \eqref{Keller-Segel equ }  & 
       \begin{tabular}{@{}c@{}}
       $\partial_t T(t, z) = -  \nabla_x( U'( p_t) + W* p_t)\circ T(t, z)$   \\  
       $\frac{d\mathbf{x}_t}{dt} = -\nabla_x U'( p_t(\mathbf{x}_t))- \nabla_x W* p_t(\mathbf{x}_t)$
       \end{tabular} \\
       \hline
    \end{tabular}
    \caption{Gradient flows under Lagrangian coordinates \& Particle dynamics associated with the WGFs.}
    \label{tab: Lagrangian coordinates GF particle dynamic }
\end{table}
\endgroup

\section{Neural projected Wassersetin gradient flows and their algorithms}\label{sec3}
As discussed in Section \ref{sec2}, instead of the direct evaluation of the density function of the Wasserstein gradient flow, it suffices to compute the time-dependent Lagrangian mapping $T(t, \cdot)$. In this research, we approximate $T(t, \cdot)$ via neural networks parametrized by time-dependent parameter $\{\theta_t\}$. The evolution of $\theta_t$ is obtained by projecting the gradient flow \eqref{Lagrangian GF} onto the parameter space $\Theta$. In this section, we briefly review the basic definitions of neural network mapping functions. We next study a metric space for neural mapping functions and formulate several neural mapping dynamics for {$\{\theta_t\}$}. 
\subsection{Neural network activation functions}
We first provide the definition of a neural network mapping function. Consider a mapping function 
\begin{equation*}
f\colon Z\times \Theta\rightarrow \Omega,
\end{equation*}
where $Z\subset\mathbb{R}^{l}$ is the latent space, $\Omega\subset\mathbb{R}^d$ is the sample space and $\Theta\subset\mathbb{R}^D$ is the parameter space. In this paper, we consider the following network structure
\begin{equation*}
f(\theta, z)=\frac{1}{N}\sum_{i=1}^Na_i\sigma\Big(z-b_i\Big),
\end{equation*}
where $\theta=(a_i, b_i)\in\mathbb{R}^{D}$, $D=(l+1)N$. Here $N$ is the number of hidden units (neurons). $a_i\in \mathbb{R}$ is the weight of unit $i$. $b_i\in\mathbb{R}^l$ is an offset (location variable). $\sigma\colon\mathbb{R}\rightarrow\mathbb{R}$ is an activation function, which satisfies 
$\sigma(0)=0$, $1\in \partial \sigma(0)$. From now on, we assume that $f$ is invertible, monotone, and is continuous w.r.t. both $z$ and $\theta$ variables. 

\noindent For example, let $N=d=1$, and $b_1=0$. Define a two layer neural network by 
\begin{center}
\begin{tikzpicture}[->,shorten >=1pt,auto,node distance=2cm,
        thick,main node/.style={circle,fill=blue!20,draw,minimum size=0.5cm,inner sep=0pt]} ]
   \node[main node] (1) {$z$};
    \node[main node] (2) [right of=1]  {$\sigma$};
    \node[main node](3)[right of=2]{$x$};
    \path[-]
    (1) edge node {} (2)
    (2) edge node{} (3);
\end{tikzpicture}
\end{center}
The following neural network mapping functions have been widely used. 
\begin{example}[Linear]
Denote $\sigma(x)=x$. Consider
\begin{equation*}
f(\theta, z)=\theta z, \quad \theta\in\mathbb{R}_+.
\end{equation*}
\end{example}
\begin{example}[ReLU]
Denote $\sigma(x)=\max\{x, 0\}$. Consider
\begin{equation*}
f(\theta, z)=\theta\max\{ z,0\}, \quad \theta\in\mathbb{R}_+.
\end{equation*}
\end{example}
\begin{example}[Sigmoid]
Denote $\sigma(x)=\frac{1}{1+e^{-2x}}$. Consider
\begin{equation*}
f(\theta, z)=\frac{\theta}{1+e^{-2 z}},\quad \theta\in\mathbb{R}_+. 
\end{equation*}
\end{example} 
In Section \ref{sec4} (theoretical results) and Section \ref{sec5} (numerical examples), we focus mainly on the case where $l=d=1$, $D=2N$. And $\sigma(\cdot)$ is the ReLU activation function.  
\subsection{Neural mapping models and energies}
In this subsection, we consider the following probability density functions generated by neural network mapping functions. We call them the neural mapping models. 
\begin{definition}[Neural mapping models]
Let us define a fixed input reference probability density $p_{\mathrm{r}}\in\mathcal{P}(Z)=\Big\{p(z)\in C^{\infty}(Z)\colon \int_Zp_{\mathrm{r}}(z)dz=1,~p(z)\geq 0\Big\}$. 
Denote a probability density generated by a neural network mapping function by the pushforward operator:
\begin{equation*}
p={f_\theta}_\#p_{\mathrm{r}}\in\mathcal{P}(\Omega),
\end{equation*}
 In other words, $p$ satisfies the following Monge-Amp\`{e}re equation by 
\begin{equation}\label{equ:monge-ampere}
p(f(\theta,z))\mathrm{det}(D_zf(\theta,z))=p_{\mathrm{r}}(z)\,,
\end{equation}
where $D_zf(\theta,z)$ is the Jacobian of the mapping function $f(\theta, z)$ w.r.t. variable $z$. 
\end{definition}
\begin{definition}[Neural mapping energies]
Given an energy functional $\mathcal{F}\colon \mathcal{P}(\Omega)\rightarrow\mathbb{R}$,  
we can construct a neural mapping energy $F\colon \Theta\rightarrow \mathbb{R}$ by
\begin{equation*}
F(\theta)=\mathcal{F}({f_{\theta}}_{\#}p_{\mathrm{r}}).
\end{equation*}
\end{definition}
Many applications in machine learning and scientific computing can be cast into the following optimization problem
\begin{equation*}
\min_{\theta\in \Theta}F(\theta).
\end{equation*}
Here, $F$ often measures the closeness between the neural mapping model and the target or data density distribution. 
Several concrete examples of neural mapping energies $F$ are given below. 
For simplicity of presentation, we often write the integration operator w.r.t. density $p_{\mathrm{r}}$ over domain $Z$ by the expectation operator $\mathbb{E}_{z\sim p_{\mathrm{r}}}$. Later in Section \ref{sec:neural-projected-Wflow}, we provide several examples of the energy functional $\mathcal F$ including the potential, the interaction (E.g. maximum mean discrepancy ) 
and the internal (information entropy/divergence) functionals. They are commonly used in machine learning and optimal transport communities; see details in \cite[Section 9]{ambrosio2005gradient}. 

To summarize, the neural mapping energies are functionals $\mathcal{F}$ written in terms of the mapping functions $f(\theta, z)$. This allows us to perform optimization on the finite dimensional space $\Theta$ instead of the infinite dimensional space $\mathcal P(\Omega)$. 
\subsection{Neural mapping metric space}
We next consider a mapping space parameterized by a neural mapping function $f(\theta, \cdot)$. 
We can measure the difference between two neural mapping functions by the $L^2$ distance thanks to the following definition. 
\begin{definition}[Neural mapping distance]
Define a distance function $\mathrm{Dist}_{\mathrm{W}}\colon \Theta\times\Theta\rightarrow\mathbb{R}$ as  

\begin{equation*}
\begin{split}
\mathrm{Dist}_{\mathrm{W}}({f_{\theta^0}}_\#p_{\mathrm{r}}, {f_{\theta^1}}_\#p_{\mathrm{r}})^2=&\int_Z\|f(\theta^0, z)-f(\theta^1, z)\|^2p_{\mathrm{r}}(z)dz\\
=&\sum_{m=1}^d\mathbb{E}_{z\sim p_{\mathrm{r}}}\Big[\|f_m(\theta^0, z)-f_m(\theta^1, z)\|^2\Big],
\end{split}
\end{equation*}
where $\theta^0$, $\theta^1\in\Theta$ are two sets of neural network parameters and $\|\cdot\|$ is the Euclidean norm in $\mathbb{R}^d$. 
\end{definition}
In the above definition, $\mathrm{Dist}_{\mathrm{W}}$ represents a distance function for two given neural mapping functions $f(\theta^0,\cdot)$ and $f(\theta^1,\cdot)$. 
In fact, the $L^2$ distance between neural mapping functions induces a metric on neural network parameters. Similar Riemannian geometry for feed-forward neural networks is also studied in \cite{ollivier2015riemannian}.

We next consider the Taylor expansion of the distance function. Let $\Delta\theta\in\mathbb{R}^D$, 
\begin{equation*}
\begin{split}
&\mathrm{Dist}_{\mathrm{W}}({f_{\theta+\Delta\theta}}_\#p_{\mathrm{r}}, {f_{\theta}}_\#p_{\mathrm{r}})^2\\=&\sum_{m=1}^d\mathbb{E}_{z\sim p_{\mathrm{r}}}\Big[\|f_m(\theta+\Delta\theta, z)-f_m(\theta, z)\|^2\Big]\\
=&\sum_{m=1}^d \sum_{i=1}^D\sum_{j=1}^D\mathbb{E}_{z\sim p_{\mathrm{r}}}\Big[\partial_{\theta_i}f_m(\theta, z)\partial_{\theta_j}f_m(\theta, z)\Big]\Delta\theta_i\Delta\theta_j+o(\|\Delta\theta\|^2)\\
=&\Delta\theta^{\ts}G_{\mathrm{W}}(\theta)\Delta\theta+o(\|\Delta\theta\|^2). 
\end{split}
\end{equation*}
Here $G_{\NT}$ is a Gram-type matrix function. We summarize its definition below.  
\begin{definition}[Neural mapping metric]\label{def:neural-metric}
Define a matrix function  $G_{\NT}\colon \Theta\rightarrow\mathbb{R}^{D\times D}$. Denote
$G_\NT(\theta)=(G_{\NT}(\theta)_{ij})_{1\leq i,j\leq D}$, such that 
\begin{equation*}
G_{\NT}(\theta)_{ij}=\sum_{m=1}^d\mathbb{E}_{z\sim p_{\mathrm{r}}}\Big[\partial_{\theta_i}f_m(\theta, z)\partial_{\theta_j}f_m(\theta, z)\Big].
\end{equation*}
We also write 
\begin{equation*}
G_\NT(\theta)=\mathbb{E}_{z\sim p_{\mathrm{r}}}\Big[\nabla_\theta f(\theta, z) \nabla_\theta f(\theta, z)^{\ts}\Big],
\end{equation*}
where we denote $\nabla_\theta f(\theta, z)=(\partial_{\theta_i}f_m(\theta, z))_{1\leq i\leq D, 1\leq m\leq d }\in\mathbb{R}^{D\times d}$.  
\end{definition}
From now on, we call $(\Theta, G_{\NT})$ the {\em neural mapping metric space}. Here we always assume that $G_{\NT}(\theta)$ is a positive definite matrix in $\mathbb{R}^{D\times D}$.

\subsection{Neural mapping dynamics}
In this subsection, we derive some analogies of Wasserstein gradient flows in the neural mapping metric space $(\Theta, G_{\mathrm{W}})$. 
Shortly, we apply them to define the neural mapping dynamics and compare them with their counterparts in $L^2$ mapping metric space and $L^2$ Wasserstein metric probability space. 
From now on, we assume that $f$ is smooth w.r.t. parameter $\theta$. This is not true for the ReLU activation function, which will be studied in detail in later sections.  

The next proposition provides gradient operators of a function $F\in C^{2}(\Theta; \mathbb{R})$ in the neural mapping metric space $(\Theta, G_{\mathrm{W}})$. 

\begin{proposition}[Neural mapping gradient operators]\label{prop35}
The gradient operator of $F$ in $(\Theta, G_{\mathrm{W}})$, $\mathrm{grad}_{\mathrm{W}}F(\theta)=(\mathrm{grad}_{\mathrm{W}} F(\theta)_k)_{k=1}^D$, is given by 
\begin{equation*}
\mathrm{grad}_{\mathrm{W}} F(\theta)_k=\sum_{i=1}^DG^{-1}_{\mathrm{W}}(\theta)_{ki}\partial_{\theta_i} F(\theta).
\end{equation*}
\end{proposition}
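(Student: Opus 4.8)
The plan is to derive the formula for the gradient operator directly from the defining property of the Riemannian gradient, namely that for all smooth curves $\{\theta(t)\}$ through $\theta$ at $t=0$,
\begin{equation*}
G_{\mathrm{W}}(\theta)\big(\mathrm{grad}_{\mathrm{W}} F(\theta), \dot\theta(0)\big) = \frac{d}{dt}F(\theta(t))\Big|_{t=0},
\end{equation*}
exactly as recalled for the Wasserstein case in Section \ref{subsection: intro WGF }. The left-hand side, since $G_{\mathrm{W}}(\theta)$ is a bilinear form on $\mathbb{R}^D$, equals $\mathrm{grad}_{\mathrm{W}} F(\theta)^{\ts} G_{\mathrm{W}}(\theta)\,\dot\theta(0)$. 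The right-hand side, by the chain rule, equals $\nabla_\theta F(\theta)^{\ts}\dot\theta(0) = \sum_{i=1}^D \partial_{\theta_i}F(\theta)\,\dot\theta_i(0)$.

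First I would observe that $\dot\theta(0)$ is an arbitrary vector in $\mathbb{R}^D$ (one can realize any direction by a linear curve $\theta(t) = \theta + tv$), so matching the two expressions for every such direction forces the identity of covectors
\begin{equation*}
G_{\mathrm{W}}(\theta)\,\mathrm{grad}_{\mathrm{W}} F(\theta) = \nabla_\theta F(\theta).
\end{equation*}
Next, since $G_{\mathrm{W}}(\theta)$ is assumed positive definite (hence invertible) for every $\theta$ — this is the standing assumption stated right after Definition \ref{def:neural-metric} — I can left-multiply by $G_{\mathrm{W}}^{-1}(\theta)$ to obtain $\mathrm{grad}_{\mathrm{W}} F(\theta) = G_{\mathrm{W}}^{-1}(\theta)\nabla_\theta F(\theta)$, whose $k$-th component is $\sum_{i=1}^D G_{\mathrm{W}}^{-1}(\theta)_{ki}\,\partial_{\theta_i}F(\theta)$, which is exactly the claimed formula.

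There is essentially no hard analytic step here: the whole content is the standard linear-algebra fact that in a Riemannian manifold the gradient is the metric-inverse applied to the differential, specialized to the flat chart $\Theta\subset\mathbb{R}^D$ where the tangent space at every point is canonically $\mathbb{R}^D$ and $G_{\mathrm{W}}(\theta)$ plays the role of the metric matrix. The only point that deserves a line of care is the justification that the bilinear form $G_{\mathrm{W}}(\theta)$ obtained from the Taylor expansion of $\mathrm{Dist}_{\mathrm{W}}^2$ is indeed the Riemannian metric tensor to be used in the definition of the gradient — but this is the content of the preceding derivation of the neural mapping metric space, so I would simply invoke it. If anything, the mild "obstacle" is purely expository: one should note that the $o(\|\Delta\theta\|^2)$ remainder in the distance expansion plays no role in the first-order identity above, since the gradient is defined via first derivatives of $F$ along curves, not via the distance function itself.
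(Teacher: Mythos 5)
Your proposal is correct and follows essentially the same route as the paper: the paper's proof also compares the first-order (in $t$) terms of $F$ along an arbitrary smooth curve, obtains $(G_{\mathrm{W}}(\theta)\,\mathrm{grad}_{\mathrm{W}}F(\theta),\dot\theta)=(\nabla_\theta F(\theta),\dot\theta)$ for all $\dot\theta\in\mathbb{R}^D$, and inverts $G_{\mathrm{W}}(\theta)$ using the standing positive-definiteness assumption. Your remarks on the arbitrariness of $\dot\theta(0)$ and the irrelevance of the $o(\|\Delta\theta\|^2)$ remainder are consistent with, and slightly more explicit than, the paper's presentation.
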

\begin{proof}
We briefly derive the gradient operator of $F$ in $(\Theta, G_\NT)$ below. Suppose $\theta(t)=\theta_t$ is a smooth curve passing through the point $\theta(0)=\theta$.  
Consider a Taylor expansion of $F(\theta_t)$ at $t=0$ by 
\begin{equation}\label{lq}
\begin{split}
F(\theta_t)=&F(\theta)+t\cdot \frac{d}{dt}F(\theta_t)|_{t=0}+o(t)\\
=&F(\theta)+t\cdot (G_{\mathrm{W}}(\theta)\cdot \mathrm{grad}_{\mathrm{W}} F(\theta), \dot\theta)+o(t), 
\end{split}
\end{equation}
where we denote $\frac{d}{dt}\theta_t|_{t=0}=\dot\theta$. Comparing linear terms of $t$ in \eqref{lq}, we have 
\begin{equation*}
\begin{split}
(G_{\mathrm{W}}(\theta)\cdot\mathrm{grad}_{\mathrm{W}} F(\theta), \dot\theta)=&\frac{d}{dt}F(\theta_t)|_{t=0}\\=&(\nabla_\theta F(\theta), \dot\theta),
\end{split}
\end{equation*}
for any $\dot \theta\in T_\theta\Theta=\mathbb{R}^d$.
Thus 
\begin{equation*}
\mathrm{grad}_{\mathrm{W}} F(\theta)=G^{-1}_{\mathrm{W}}(\theta)\nabla_\theta F(\theta).
\end{equation*}
\end{proof}
We are ready to present the neural mapping gradient flow, which will be used for our first-order algorithm in neural mapping optimization problems.  
\begin{proposition}[Neural mapping gradient flows]\label{prop:NGF}
Consider an energy functional $\mathcal{F}\colon \mathcal{P}(\Omega)\rightarrow\mathbb{R}$. Then the gradient flow of function $F(\theta)=\mathcal{F}({f_{\theta}}_{\#}p_{\mathrm{r}})$ in $(\Theta, G_{\mathrm{W}})$ is given by 
\begin{equation}\label{NGD}
\frac{d\theta}{dt}=-\mathrm{grad}_{\mathrm{W}}F(\theta).
\end{equation}
In particular, 
\begin{equation*}
\begin{split}
\frac{d\theta_i}{dt}=&-\sum_{j=1}^D\sum_{m=1}^d\Big(\mathbb{E}_{z\sim p_{\mathrm{r}}}\Big[\nabla_\theta f(\theta, z)\nabla_\theta f(\theta, z)^{\ts}\Big]\Big)_{ij}^{-1}\cdot\\
&\hspace{2cm}\mathbb{E}_{\tilde z\sim p_{\mathrm{r}}}\Big[\nabla_{x_m}\frac{\delta}{\delta p}\mathcal{F}(p)(f(\theta,\tilde z))\cdot \partial_{\theta_j}f_m(\theta,\tilde z)\Big],
\end{split}
\end{equation*}
where $\frac{\delta}{\delta p(x)}$ is the $L^2$--first variation w.r.t. variable $p(x)$, $x=f(\theta,z)$.
\end{proposition}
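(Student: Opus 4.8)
The plan is to combine Proposition \ref{prop35} with the chain rule for the first variation of $F(\theta) = \mathcal F({f_\theta}_\# p_{\mathrm r})$. First I would note that the gradient flow \eqref{NGD} is, by Proposition \ref{prop35}, simply $\frac{d\theta}{dt} = -G_{\mathrm W}^{-1}(\theta)\nabla_\theta F(\theta)$, so the only real content is to compute the Euclidean gradient $\nabla_\theta F(\theta)$ and identify it with the stated expectation. Writing $p = {f_\theta}_\# p_{\mathrm r}$ and perturbing $\theta \mapsto \theta + \epsilon e_j$, I would differentiate $F(\theta+\epsilon e_j)$ in $\epsilon$ at $\epsilon=0$; since $\mathcal F$ is smooth on $\mathcal P(\Omega)$, the chain rule gives $\partial_{\theta_j}F(\theta) = \int_\Omega \frac{\delta}{\delta p}\mathcal F(p)(x)\, \partial_{\theta_j} p(x)\, dx$, where $\partial_{\theta_j}p$ denotes the variation of the pushed-forward density.

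The key step is to rewrite $\int_\Omega \frac{\delta}{\delta p}\mathcal F(p)(x)\,\partial_{\theta_j}p(x)\,dx$ as an expectation against $p_{\mathrm r}$ in the latent variable. The cleanest route is to avoid differentiating the density directly: instead, observe that for any test function $\phi$, $\int_\Omega \phi(x)\, p(x)\,dx = \mathbb E_{z\sim p_{\mathrm r}}[\phi(f(\theta,z))]$, and differentiating this identity in $\theta_j$ with $\phi = \frac{\delta}{\delta p}\mathcal F(p)$ held fixed (legitimate because the variation picks up exactly the $\partial_{\theta_j}p$ term to leading order) yields
\[
\partial_{\theta_j}F(\theta) = \mathbb E_{\tilde z\sim p_{\mathrm r}}\Big[\nabla_x \tfrac{\delta}{\delta p}\mathcal F(p)(f(\theta,\tilde z))\cdot \partial_{\theta_j}f(\theta,\tilde z)\Big] = \sum_{m=1}^d \mathbb E_{\tilde z\sim p_{\mathrm r}}\Big[\nabla_{x_m}\tfrac{\delta}{\delta p}\mathcal F(p)(f(\theta,\tilde z))\,\partial_{\theta_j}f_m(\theta,\tilde z)\Big].
\]
Here the gradient $\nabla_x$ appears because $\partial_{\theta_j}$ acts on the argument $f(\theta,\tilde z)$ of $\frac{\delta}{\delta p}\mathcal F(p)$. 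Substituting this into $\frac{d\theta_i}{dt} = -\sum_j G_{\mathrm W}^{-1}(\theta)_{ij}\,\partial_{\theta_j}F(\theta)$ and inserting the explicit form $G_{\mathrm W}(\theta) = \mathbb E_{z\sim p_{\mathrm r}}[\nabla_\theta f(\theta,z)\nabla_\theta f(\theta,z)^{\ts}]$ from Definition \ref{def:neural-metric} gives precisely the claimed componentwise formula.

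The main obstacle is the careful justification of differentiating under the expectation/integral and of the heuristic "freeze $\phi$" argument — that is, showing that $\frac{d}{d\epsilon}\big|_0 \mathbb E_{z\sim p_{\mathrm r}}[\phi_\epsilon(f(\theta+\epsilon e_j,z))]$ with $\phi_\epsilon = \frac{\delta}{\delta p}\mathcal F(p_\epsilon)$ reduces to the term with $\phi$ frozen, because the contribution from differentiating $\phi_\epsilon$ vanishes by the very definition of the first variation (it integrates $\frac{\delta}{\delta p}\mathcal F$ against a total mass-zero perturbation, which equals $\frac{d}{d\epsilon}\mathcal F$ — there is a subtle factor-of-matching to check). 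Under the standing smoothness and invertibility assumptions on $f$ and the regularity of $\mathcal F$, these manipulations are valid, and the Neumann-type boundary terms do not appear since everything is phrased in Lagrangian coordinates on $Z$. Given that the excerpt treats $f$ as smooth in $\theta$ here (the ReLU non-smoothness being deferred), the proof is essentially the chain rule plus Proposition \ref{prop35}, and I would keep the write-up to a few lines.
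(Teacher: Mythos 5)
Your proposal is correct and takes essentially the same approach as the paper: the paper likewise reduces the claim to Proposition \ref{prop35} plus the identity $\partial_{\theta_j}F(\theta)=\mathbb{E}_{z\sim p_{\mathrm{r}}}\big[\nabla_x\tfrac{\delta}{\delta p}\mathcal{F}(p)(f(\theta,z))\cdot\partial_{\theta_j}f(\theta,z)\big]$, which it derives by expressing $\partial_{\theta_j}\rho_\theta$ through the continuity equation and integrating by parts --- precisely the strong form of your frozen-test-function differentiation of the pushforward identity, followed by the same change of variables. One small remark: the cancellation you worry about in your final paragraph is not needed, since the chain rule already evaluates $\tfrac{\delta}{\delta p}\mathcal{F}$ at the base density, so no $\phi_\epsilon$ ever gets differentiated.
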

\begin{proof}
As the neural mapping metric is given in \cref{def:neural-metric}, it suffices to calculate the formula for the Euclidean gradient $\partial_{\theta_j}F(\theta)$ as follows:

\begin{equation*}
    \begin{aligned}
        \partial_{\theta_j}F(\theta) = & \ \int_{\Omega}\partial_{\theta_j}\rho_\theta(x)  \frac{\delta}{\delta p}\mathcal{F}(\rho_\theta)(x)dx      \\
        = & \ \int_{\Omega} - \nabla_x \cdot \left[\rho_\theta(x) \partial_{\theta_j}f(\theta, f(\theta, \cdot)^{-1}(x)))\right]\frac{\delta}{\delta p}\mathcal{F}(\rho_\theta)(x) dx   \\
        = & \ \int_{\Omega} \partial_{\theta_j}f(\theta, f(\theta, \cdot)^{-1}(x)) \cdot \nabla_x \left(\frac{\delta}{\delta p}\mathcal{F}(\rho_\theta)\right)(x) \rho_\theta(x) dx      \\
        = & \ \mathbb{E}_{z\sim p_{\mathrm{r}}}\Big[ \partial_{\theta_j}f(\theta,z) \cdot \nabla_x\left(\frac{\delta}{\delta p}\mathcal{F}(p)\right)(f(\theta,z)) \Big]\,.
    \end{aligned}
\end{equation*}
Here we denote $\rho_\theta = f_{\theta\#}p_{\mathrm{r}}$. 
\end{proof}

\subsection{Neural projected Wasserstein flows}\label{sec:neural-projected-Wflow}
The dynamics in parameter space can be formulated in terms of mappings and probability densities. For simplicity of discussion, we demonstrate that the neural mapping gradient flow is a projected Wasserstein gradient flow. Here the projection is from the full mapping space into a neural parameterized mapping space. Concretely, we present the following reformulations of equation \eqref{NGD}, which are in terms of mapping functions and probability density functions. The proof is based on the gradient flow equation in \cref{prop:NGF} and the application of the chain rule.

\begin{proposition}[Neural projected Wasserstein gradient flows]
Dynamic \eqref{NGD} in term of mapping functions $f(\theta,z)=(f_m(\theta,z))_{m=1}^d$ leads to   
\begin{equation*}
\begin{split}
\frac{\partial}{\partial t}f_m(\theta(t), z)=&-\sum_{i=1}^D\sum_{j=1}^D\sum_{n=1}^d\partial_{\theta_i}f_m(\theta,z)\Big(\mathbb{E}_{\tilde z\sim p_{\mathrm{r}}}\Big[\nabla_\theta f(\theta, \tilde z)\nabla_\theta f(\theta, \tilde z)^{\ts}\Big]\Big)_{ij}^{-1}\cdot\\
&\hspace{2.5cm}\mathbb{E}_{\tilde z\sim p_{\mathrm{r}}}\Big[\nabla_{x_n}\frac{\delta}{\delta p(x)}\mathcal{F}(p)(f(\theta,\tilde z))\cdot \partial_{\theta_j}f_n(\theta,\tilde z)\Big].
\end{split}
\end{equation*}

\end{proposition}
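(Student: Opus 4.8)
The statement is purely a restatement of the neural mapping gradient flow \eqref{NGD} at the level of the mapping functions $f_m(\theta(t),\cdot)$, so the plan is to apply the chain rule to $\frac{\partial}{\partial t}f_m(\theta(t),z)$ and then substitute the explicit formula for $\frac{d\theta}{dt}$ supplied by \cref{prop:NGF}. First I would write, by the chain rule in the $\theta$ variable,
\begin{equation*}
\frac{\partial}{\partial t}f_m(\theta(t),z)=\sum_{i=1}^D\partial_{\theta_i}f_m(\theta(t),z)\,\frac{d\theta_i}{dt},
\end{equation*}
which is valid under the standing smoothness assumption on $f$ in $\theta$ (the ReLU case, where this fails, is explicitly deferred to later sections, so I may assume it here).

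Next I would insert the componentwise expression for $\frac{d\theta_i}{dt}=-\mathrm{grad}_{\mathrm W}F(\theta)_i$ from \cref{prop:NGF}, namely
\begin{equation*}
\frac{d\theta_i}{dt}=-\sum_{j=1}^D G_{\mathrm W}^{-1}(\theta)_{ij}\,\partial_{\theta_j}F(\theta),
\end{equation*}
together with the Euclidean-gradient identity derived in the proof of \cref{prop:NGF},
\begin{equation*}
\partial_{\theta_j}F(\theta)=\sum_{n=1}^d\mathbb{E}_{\tilde z\sim p_{\mathrm r}}\Big[\partial_{\theta_j}f_n(\theta,\tilde z)\,\nabla_{x_n}\tfrac{\delta}{\delta p}\mathcal F(p)(f(\theta,\tilde z))\Big],
\end{equation*}
and using $G_{\mathrm W}(\theta)=\mathbb{E}_{\tilde z\sim p_{\mathrm r}}[\nabla_\theta f(\theta,\tilde z)\nabla_\theta f(\theta,\tilde z)^{\ts}]$ from \cref{def:neural-metric}. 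Collecting the three sums over $i$, $j$, $n$ and pulling $\partial_{\theta_i}f_m(\theta,z)$ inside gives exactly the claimed triple-sum identity.

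There is really no substantive obstacle here: the proposition is a bookkeeping corollary of \cref{prop:NGF}, and the only thing to be careful about is keeping the two expectations over independent dummy variables ($z$ fixed for the $\partial_{\theta_i}f_m$ factor, $\tilde z$ integrated for both the metric and the gradient factors) properly labeled so that the nested structure matches the statement. If I wanted to be thorough I would note that the interchange of differentiation and expectation used to obtain $\partial_{\theta_j}F$ requires a dominated-convergence-type hypothesis on $\nabla_\theta f$ and $\nabla_x\frac{\delta}{\delta p}\mathcal F$, which is consistent with the blanket regularity assumptions already in force; and I would remark that the companion reformulation in terms of the density $\rho_\theta=f_{\theta\#}p_{\mathrm r}$ follows by the same computation combined with the Monge--Amp\`ere relation \eqref{equ:monge-ampere}. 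The hardest part, such as it is, is purely notational: making sure the ReLU non-smoothness caveat is flagged so that the chain-rule step is not applied outside its validity.
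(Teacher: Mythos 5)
Your proposal is correct and follows exactly the route the paper indicates: the paper gives no separate proof for this proposition, stating only that it follows from the gradient flow equation in \cref{prop:NGF} together with the chain rule, which is precisely the chain-rule-plus-substitution computation you carry out (with the Euclidean gradient formula from the proof of \cref{prop:NGF} and the definition of $G_{\mathrm{W}}$ in \cref{def:neural-metric}). Your additional remarks on dummy-variable bookkeeping and the ReLU smoothness caveat are consistent with the paper's standing assumptions and do not change the argument.
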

We present several examples of neural mapping Wasserstein gradient flows from proposition \ref{prop:NGF}. 
\begin{example}[Neural projected linear transport equation]\label{ex4} 
Consider a linear energy given by 
\begin{equation*}
\mathcal{F}(p)=\int_\Omega V(x)p(x)dx. 
\end{equation*}
In this case, the neural projected gradient flow satisfies 
\begin{equation}\label{eq:gf_linear}
\frac{d\theta}{dt}=-G_{\mathrm{W}}^{-1}(\theta)\cdot\mathbb{E}_{\tilde z\sim p_{\mathrm{r}}}\Big[\nabla_\theta V(f(\theta,\tilde z))\Big].
\end{equation}
In details, 
\begin{equation*}
\frac{d\theta_i}{dt}=-\sum_{j=1}^D\Big(\mathbb{E}_{z\sim p_{\mathrm{r}}}\Big[\nabla_\theta f(\theta, z)\nabla_\theta f(\theta, z)^{\ts}\Big]\Big)_{ij}^{-1}\cdot\mathbb{E}_{\tilde z\sim p_{\mathrm{r}}}\Big[\nabla_xV(f(\theta,\tilde z))\cdot \partial_{\theta_j}f(\theta, \tilde z)\Big]. 
\end{equation*}
\end{example}
\begin{example}[Neural projected interaction transport equation]\label{ex5}
Consider an interaction energy given by
\begin{equation*}
\mathcal{F}(p)=\frac{1}{2}\int_\Omega\int_\Omega W(x_1,x_2)p(x_1)p(x_2)dx_1dx_2.
\end{equation*}
In this case, the neural mapping gradient flow satisfies 
\begin{equation}\label{eq:gf_interaction}
\frac{d\theta}{dt}=-\frac{1}{2}G_{\mathrm{W}}^{-1}(\theta)\cdot\mathbb{E}_{(z_1,z_2)\sim p_{\mathrm{r}}\times p_{\mathrm{r}}}\Big[\nabla_\theta W(f(\theta,z_1), f(\theta, z_2))\Big].
\end{equation}
In details, 
\begin{equation*}
\begin{split}
\frac{d\theta_i}{dt}=&-\sum_{j=1}^D\Big(\mathbb{E}_{z\sim p_{\mathrm{r}}}\Big[\nabla_\theta f(\theta, z)\nabla_\theta f(\theta, z)^{\ts}\Big]\Big)_{ij}^{-1}\cdot\\
&\hspace{1.2cm}\mathbb{E}_{(z_1,z_2)\sim p_{\mathrm{r}}\times p_{\mathrm{r}}}\Big[\nabla_{x_1}W(f(\theta,z_1), f(\theta,z_2))\cdot \partial_{\theta_j}f(\theta, z_1)\Big]. 
\end{split}
\end{equation*}
\end{example}
\begin{example}[Neural projected negative entropy]\label{ex6}
Consider a negative entropy functional given by
\begin{equation*}
\mathcal{F}(p)=\int_\Omega U(p(x))dx.
\end{equation*}
In this case, the neural mapping gradient flow satisfies 
\begin{equation}\label{eq:gf_entropy}
\frac{d\theta}{dt}=-G_{\mathrm{W}}^{-1}(\theta)\cdot\mathbb{E}_{z\sim p_{\mathrm{r}}}\Big[\nabla_\theta \hat U(\frac{p_{\mathrm{r}}(z)}{\mathrm{det}(D_zf(\theta,z))})\Big]\,,
\end{equation}
where $\hat U(p) = U(p)/p$. This is because: 
\begin{equation*}
\begin{split}
\mathcal{F}({f_{\theta}}_\#p_{\mathrm{r}})=&\int_\Omega U(p(f(\theta,z)))df(\theta,z)\\
=&\int_Z U(\frac{p_{\mathrm{r}}(z)}{\mathrm{det}(D_zf(\theta,z))})\frac{\mathrm{det}(D_zf(\theta,z))}{p_{\mathrm{r}}(z)}p_{\mathrm{r}}(z)dz\\
=&\mathbb{E}_{z\sim p_{\mathrm{r}}}\Big[\hat U(\frac{p_{\mathrm{r}}(z)}{\mathrm{det}(D_zf(\theta,z))})\Big]\,.
\end{split}
\end{equation*}
The choice $U(p) = p\log (p)$ and $\hat U(p)= \log(p)$ corresponds to the negative entropy. This belongs to the family of internal energy.
In details, 
\begin{equation*}
\begin{split}
\frac{d\theta_i}{dt}=&-\sum_{j=1}^D\Big(\mathbb{E}_{z\sim p_{\mathrm{r}}}\Big[\nabla_\theta f(\theta, z)\nabla_\theta f(\theta, z)^{\ts}\Big]\Big)_{ij}^{-1}\cdot\\
&\hspace{1.2cm}\mathbb{E}_{z\sim p_{\mathrm{r}}}\Big[-\mathrm{tr}\Big(D_zf(\theta,z)^{-1}\colon\partial_{\theta_j}D_zf(\theta, z)\Big)\hat U'(\frac{p_{\mathrm{r}}(z)}{\mathrm{det}(D_zf(\theta,z))})\frac{p_{\mathrm{r}}(z)}{\mathrm{det}(D_zf(\theta,z))}\Big]. 
\end{split}
\end{equation*}
Here we denote $\mathrm{tr}(A\colon B)=\mathrm{tr}(AB)$, for matrices $A$, $B\in\mathbb{R}^{d\times d}$. 
\end{example}
The above examples are projected Wasserstein gradient flows in neural mapping metric space. In particular, Examples \ref{ex4}, \ref{ex5},  \ref{ex6} correspond to the following classical PDEs, respectively.  
\begin{align}
\partial_tp(t,x)=&\nabla_x\cdot\Big(p(t,x)\nabla_x V(x)\Big),\label{eq:transport_pde}\\
\partial_tp(t,x)=&\nabla_x\cdot\Big(p(t,x)\int_\Omega \nabla_xW(x,y)p(t,y)dy\Big),\label{eq:interaction_pde}\\
\partial_tp(t,x)=&\nabla_x\cdot\Big(p(t,x)\nabla_xU'(p(t,x))\Big).
\label{eq:negative_entropy_pde}\,
\end{align}
The above dynamics include potential transport, interaction transport, and porous medium equations. The Fokker-Planck equation is a combination of the above first and third equations.
\subsection{Algorithm}
In this section, we discuss the implementations of gradient flows projected onto the parameter space. We apply the forward Euler discretization of the natural gradient flow \eqref{NGD}. 
Let $h>0$ be the step size. Then the update is given by 
\begin{equation}\label{eq:euler}
\theta^{k+1}=\theta^k-h\Big(\tilde G_{\mathrm{W}}(\theta^k)\Big)^{-1}\nabla_\theta \tilde F(\theta^k)\,,
\end{equation}
where $\tilde G_{\mathrm{W}}(\theta)=(\tilde G_{\mathrm{W}}(\theta)_{ij})_{1\leq i,j\leq D}\in\mathbb{R}^{D\times D}$, $\nabla_\theta\tilde F(\theta)$ are empirical estimates of the matrix $G_{\mathrm{W}}$ and the gradient $\nabla F(\theta)=\{\partial_{\theta_j}F(\theta)\}_{j=1}^D$, respectively. In details, if $(z_i)_{l=1}^M\sim p_{\mathrm{r}}$, where $M$ is the number of empirical samples, then
\begin{equation*}
\tilde G_{\mathrm{W}}(\theta)_{ij}=\frac{1}{M}\sum_{l=1}^{M}\sum_{m=1}^d\partial_{\theta_i}f_m(z_l,\theta)\partial_{\theta_j}f_m(z_l,\theta)\,.
\end{equation*}
In practice, the condition number of $\tilde G_{\mathrm{W}}(\theta)$ could be very large and it is more stable to use instead the pseudoinverse of $\tilde G_{\mathrm{W}}(\theta)$ in \eqref{eq:euler}. 
Therefore, the update is  
\begin{equation*}
\theta^{k+1}=\theta^k-h\tilde G_{\mathrm{W}}(\theta)^\dagger\nabla_\theta \tilde F(\theta^k)\,.
\end{equation*}
When the reference measure is a one-dimensional standard Gaussian distribution, $G_{\mathrm{W}}(\theta)$ can be explicitly computed for our choice of neural network. In this case, we have 
\begin{equation*}
\theta^{k+1}=\theta^k-h G_{\mathrm{W}}(\theta)^\dagger\nabla_\theta \tilde F(\theta^k)\,.
\end{equation*}
We summarize the above explicitly update formulas below. 
	\begin{algorithm}[H] \label{alg}
		\begin{algorithmic}
			\caption{Projected Wasserstein gradient flows}
			\STATE{\textbf{Input:} Initial parameters $\theta\in\mathbb{R}^D$; stepsize $h>0$, total number of steps $L$, samples $\{z_i\}_{i=1}^M\sim p_{\mathrm{r}}$ for estimating $\tilde G_{\mathrm{W}}(\theta)$ and $\nabla_\theta \tilde F(\theta)$.}
			\STATE
			\FOR{$k=1,2,\ldots, L$ }
\STATE{\begin{equation*}
\theta^{k+1}=\theta^k-h\tilde G_{\mathrm{W}}(\theta)^\dagger\nabla_\theta \tilde F(\theta^k); \textrm{\quad (when $G_{\mathrm{W}}(\theta) $ is unknown)} 
\end{equation*}}
\OR 
\STATE{\begin{equation*}
\theta^{k+1}=\theta^k-h G_{\mathrm{W}}(\theta)^\dagger\nabla_\theta \tilde F(\theta^k); \textrm{\quad (when $G_{\mathrm{W}}(\theta) $ is known)} 
\end{equation*}}
			\ENDFOR
		\end{algorithmic}
	\end{algorithm}

\section{Numerical analysis on neural network projected gradient flows}\label{sec4}
In this section, we establish theoretical guarantees for the performance of the neural projected dynamics. We start by deriving an analytic formula for the inverse of the neural mapping metric of a special ReLU family in \cref{sec:analytic-inv}. Based on the closed-form projected dynamics equations, we can establish the truncated error analysis for the projected dynamics in \cref{sec:special-truncated}. The analysis of truncated error for general dynamics is presented in \cref{sec:general-truncated}.

\subsection{Analytic formula for the inverse of neural mapping metric}\label{sec:analytic-inv}
In this section, we consider the following special case of the ReLU model in 1D. We first rewrite the neural network mapping function into the following form:
\begin{equation}\label{equ:ReLU-network}
f(\theta, z)=\frac{1}{N}\sum_{i=1}^Na_i\sigma(z-b_i), \quad \sigma(z) = \left\{\begin{aligned}
    & 0, \quad z < 0,       \\
    & z, \quad z \geq 0.
\end{aligned}\right.
\end{equation}
In particular, we combine $a_i, b_i$ into one parameter in the 1D case. Under this reparameterization, $a_i$s represent the slopes of each ReLU component and $b_i$s are the intercepts. To make the last assumption on this ReLU network mapping function which facilitates the analytic formula of the neural mapping metric, we require all the slope parameters to stay non-negative, i.e. $a_i \geq 0$. Although this is an artificial assumption to enforce analyticity, it is natural in the sense that positive slope parameters induce monotone mapping function. Meanwhile, solutions of the Monge problems in 1D are known to be monotone. In \cref{fig:ReLU}, we plot a typical ReLU mapping function.

\begin{figure}[ht]
	\centering
        \label{fig:ReLU}
  	\includegraphics[width=0.6\linewidth]{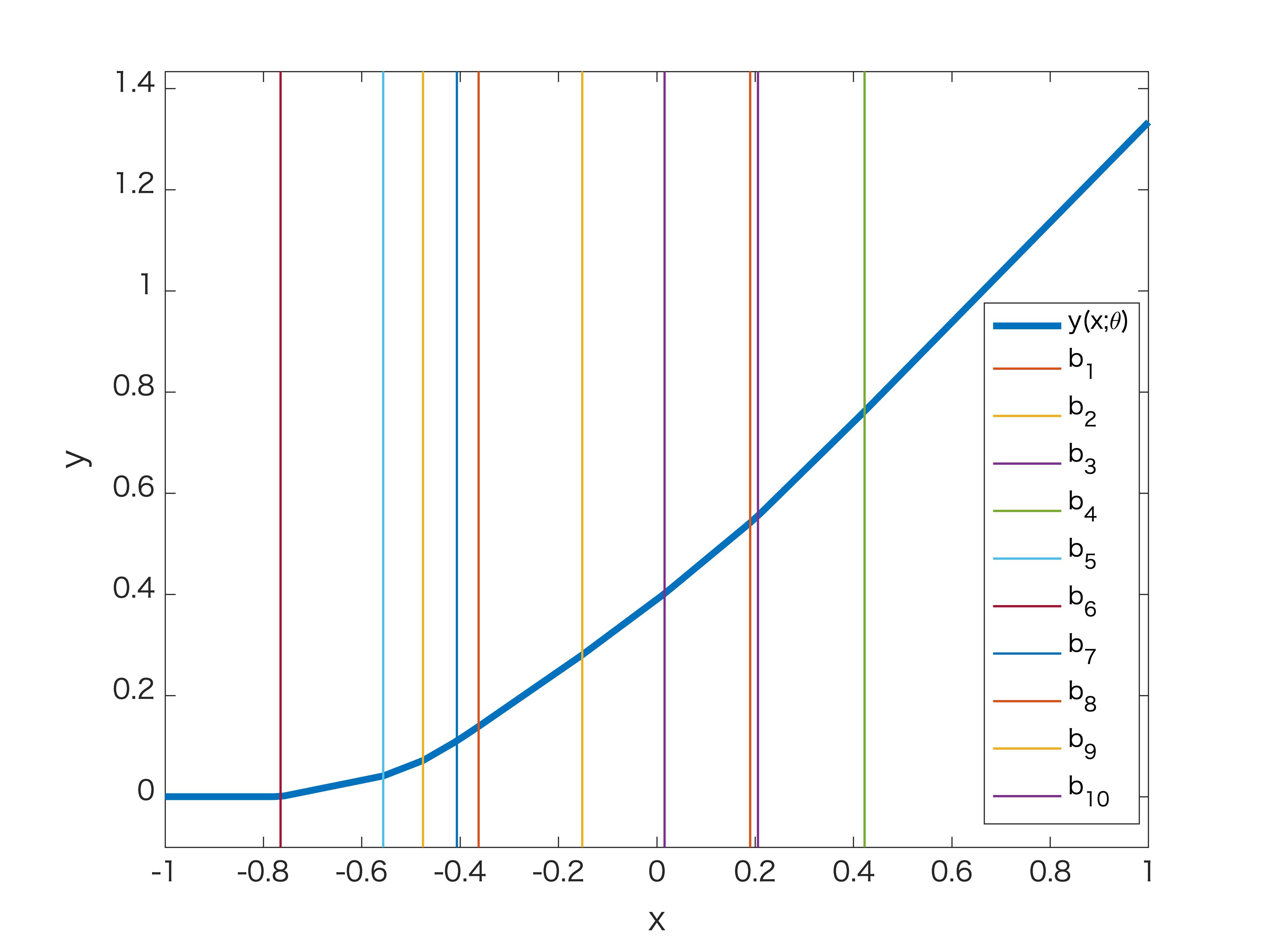}
  	\caption{ReLU network mapping function considered in this section. The figure plots a typical monotone map parameterized by the ReLU network where the parameter $a_i$ is required to be positive.}
  	\label{ReLU_2}
	\end{figure}
We start with the analytic formula for the neural mapping metric, assuming the reference measure is given by $p_{\mathrm{r}}(\cdot)$ with associated cumulative distribution function $\mathfrak{F}_0(\cdot)$. 

\begin{proposition}[Neural mapping metric of two-layer ReLU network]\label{prop:metric-2layer}
	The neural mapping metric of the two-layer ReLU network with reference measure $p_{\mathrm{r}}(\cdot)$ is given as
	\bequ\label{WIM-2lay}
		\begin{aligned}
		&G_{\mathrm{W}} = \frac{1}{N^2}\begin{pmatrix}
			G_{\mathrm{W}}^{bb} & G_{\mathrm{W}}^{bw} 		\\
			\lp G_{\mathrm{W}}^{bw} \right)^T & G_{\mathrm{W}}^{ww}
		\end{pmatrix},		\\
		&G_{\mathrm{W}}^{bb} = \begin{pmatrix}	
			a_1^2\lp 1 - \mathfrak{F}_0\lp b_1 \right)\right) & a_1a_2\lp 1 - \mathfrak{F}_0\lp b_2 \right)\right) & \cdots & a_1a_N\lp 1 - \mathfrak{F}_0\lp b_N \right)\right)		\\
			a_1a_2\lp 1 - \mathfrak{F}_0\lp b_2 \right)\right) & a_2^2\lp 1 - \mathfrak{F}_0\lp b_2 \right)\right) &  \cdots & a_2a_N\lp 1 - \mathfrak{F}_0\lp b_N \right)\right)		\\
			 \vdots & \vdots & \ddots & \vdots 		\\
			 a_1a_{N - 1}\lp 1 - \mathfrak{F}_0\lp b_{N - 1} \right)\right) & a_2a_{N - 1}\lp 1 - \mathfrak{F}_0\lp b_{N - 1} \right)\right) &  \cdots & a_Na_{N - 1}\lp 1 - \mathfrak{F}_0\lp b_N \right)\right)	\\
			 \\
			a_1a_N\lp 1 - \mathfrak{F}_0\lp b_N \right)\right) & a_2a_N\lp 1 - \mathfrak{F}_0\lp b_N \right)\right) &  \cdots & a_N^2\lp 1 - \mathfrak{F}_0\lp b_N \right)\right)
		\end{pmatrix},		\\
		\\
		&G_{\mathrm{W}}^{ba} = -\begin{pmatrix}	
			a_1\int_{b_1}^{\infty} \lp z - b_1 \right) p_{\mathrm{r}}(z) dz & a_1\int_{b_2}^{\infty} \lp z - b_2 \right) p_{\mathrm{r}}(z) dz & \cdots & a_1\int_{b_N}^{\infty} \lp z - b_N \right) p_{\mathrm{r}}(z) dz		\\
			\\
			a_2\int_{b_2}^{\infty} \lp z - b_1 \right) p_{\mathrm{r}}(z) dz & a_2\int_{b_2}^{\infty} \lp z - b_2 \right) p_{\mathrm{r}}(z) dz & \cdots & a_2\int_{b_N}^{\infty} \lp z - b_N \right) p_{\mathrm{r}}(z) dz		\\
			 \vdots & \vdots & \ddots & \vdots 		\\
			 
			a_N\int_{b_N}^{\infty} \lp z - b_1 \right) p_{\mathrm{r}}(z) dz & a_N\int_{b_N}^{\infty} \lp z - b_2 \right) p_{\mathrm{r}}(z) dz & \cdots & a_N\int_{b_N}^{\infty} \lp z - b_N \right) p_{\mathrm{r}}(z) dz
		\end{pmatrix},		\\
				&\lp G_{\mathrm{W}}^{aa} \right)_{ij} =\int_{\max\{b_i, b_j\}}^{\infty} \lp z - b_j \right) \lp z - b_i \right) p_{\mathrm{r}}(z) dz.
		\end{aligned}
	\eequ
\end{proposition}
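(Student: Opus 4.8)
The plan is to reduce the claimed identity to a direct computation of the Gram matrix in \cref{def:neural-metric}. Since $d=1$ here, that metric reads $G_{\mathrm W}(\theta)_{ij}=\mathbb E_{z\sim p_{\mathrm r}}[\partial_{\theta_i}f(\theta,z)\,\partial_{\theta_j}f(\theta,z)]$, so the whole statement follows once the derivatives of $f(\theta,z)=\frac1N\sum_{i}a_i\sigma(z-b_i)$ with respect to the slopes $a_i$ and the breakpoints $b_i$ are in hand.

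First I would fix $z$ outside the finite set $\{b_1,\dots,b_N\}$, where $f(\cdot,z)$ is smooth near $\theta$; since $\sigma'(t)=\mathbf 1_{\{t>0\}}$ for $t\neq0$, this yields $\partial_{a_i}f(\theta,z)=\frac1N(z-b_i)\mathbf 1_{\{z>b_i\}}$ and $\partial_{b_i}f(\theta,z)=-\frac{a_i}{N}\mathbf 1_{\{z>b_i\}}$. Because $p_{\mathrm r}$ is a smooth density, its CDF $\mathfrak F_0$ is continuous and each kink locus $\{z=b_i\}$ is $p_{\mathrm r}$-null, so these a.e.\ derivatives are exactly the ones entering the $L^2(p_{\mathrm r})$ inner product. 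Substituting and ordering the parameters as $(b_1,\dots,b_N,a_1,\dots,a_N)$ produces the $2\times2$ block form with blocks $G_{\mathrm W}^{bb},\,G_{\mathrm W}^{ba},\,G_{\mathrm W}^{aa}$ (and the transpose $(G_{\mathrm W}^{ba})^{\ts}$); using $\mathbf 1_{\{z>b_i\}}\mathbf 1_{\{z>b_j\}}=\mathbf 1_{\{z>\max\{b_i,b_j\}\}}$ and $\mathbb E[\mathbf 1_{\{z>t\}}]=1-\mathfrak F_0(t)$, they come out as
\[
(G_{\mathrm W}^{bb})_{ij}=a_ia_j\bigl(1-\mathfrak F_0(\max\{b_i,b_j\})\bigr),\qquad (G_{\mathrm W}^{ba})_{ij}=-a_i\int_{\max\{b_i,b_j\}}^{\infty}(z-b_j)\,p_{\mathrm r}(z)\,dz,
\]
\[
(G_{\mathrm W}^{aa})_{ij}=\int_{\max\{b_i,b_j\}}^{\infty}(z-b_i)(z-b_j)\,p_{\mathrm r}(z)\,dz,
\]
together with the scalar factor $1/N^2$ pulled out front. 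This is the stated formula; the matrices displayed in the proposition are simply written under the labelling $b_1\le\cdots\le b_N$, so that $\max\{b_i,b_j\}=b_{\max\{i,j\}}$.

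The only genuinely non-routine point is that the paper's running assumption that $f$ is smooth in $\theta$ fails for the ReLU activation, so I have to justify that the Gram formula $G_{\mathrm W}(\theta)_{ij}=\mathbb E[\partial_{\theta_i}f\,\partial_{\theta_j}f]$ is still correct with the above a.e.\ derivatives. I expect this to be the main (though mild) obstacle, and would handle it by returning to the second-order expansion of $\mathrm{Dist}_{\mathrm W}^2$ that defines $G_{\mathrm W}$: for each fixed $z\notin\{b_1,\dots,b_N\}$ the map $\theta\mapsto f(\theta,z)$ is locally Lipschitz and smooth at $\theta$, its difference quotients over a small ball around $\theta$ are dominated by an affine function of $|z|$ (which is $p_{\mathrm r}$-integrable under the same finite-moment condition that makes the integrals in the statement well-defined), and the union of the kink loci is $p_{\mathrm r}$-null; dominated convergence then gives $\mathrm{Dist}_{\mathrm W}({f_{\theta+\Delta\theta}}_\#p_{\mathrm r},{f_\theta}_\#p_{\mathrm r})^2=\Delta\theta^{\ts}G_{\mathrm W}(\theta)\Delta\theta+o(\|\Delta\theta\|^2)$ with $G_{\mathrm W}$ as computed. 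Everything else is the elementary integral evaluation displayed above.
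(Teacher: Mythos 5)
Your proposal is correct and follows essentially the same route as the paper's own proof: compute the a.e.\ parameter derivatives of the ReLU map, note the kink points are $p_{\mathrm r}$-null, and evaluate the Gram integrals block by block. The only difference is that you add a dominated-convergence justification for using the a.e.\ derivatives in the second-order expansion of $\mathrm{Dist}_{\mathrm W}^2$, a point the paper dispatches with the remark that the singular points ``can be omitted from the measure-theoretical perspective.''
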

\begin{proof}
	We first calculate the derivative of the neural network map $f(\theta, z)$ w.r.t. network parameters $\theta$
	\bequ\label{equ:ReLU-network-derivative}
		\begin{aligned}
		& \partial_{b_i} f(\theta, z) = \lbb \begin{aligned}
			& \ 0, \quad z < b_i, 		\\
			& -\frac{a_i}{N}, \quad z > b_i,
		\end{aligned}\right.	\quad \partial_{a_i} f(\theta, z) = \frac{1}{N}\sigma\lp z - b_i \right),
		\end{aligned}
	\eequ
	while the value at the singular point $b_i$ does not exist and can be omitted from the measure-theoretical perspective. According to \cref{def:neural-metric}, one can evaluate different blocks of the metric tensor as the following integral
	\bequn	
		\begin{aligned}
			\lp G_{\mathrm{W}}^{bb} \right)_{ij} & = \int_{\mbR} \partial_{b_i} f(\theta, z) \partial_{b_j} f(\theta, z) p_{\mathrm{r}}(z)dz = \frac{a_ia_j}{N^2}\lp 1 - \mathfrak{F}_0\lp \max\{b_i, b_j\} \right)\right),		\\
			\lp G_{\mathrm{W}}^{ba} \right)_{ij} & = \int_{\mbR} \partial_{b_i} f(\theta, z) \partial_{a_j} f(\theta, z) p_{\mathrm{r}}(z)dz = -\frac{a_i}{N^2}\int_{\max\{b_i, b_j\}}^{\infty} \lp z - b_j \right) p_{\mathrm{r}}(z) dz, 		\\
			\lp G_{\mathrm{W}}^{aa} \right)_{ij} & = \int_{\mbR} \partial_{a_i} f(\theta, z) \partial_{a_j} f(\theta, z) p_{\mathrm{r}}(z)dz = \frac{1}{N^2}\int_{\max\{b_i, b_j\}}^{\infty} \lp z - b_j \right) \lp z - b_i \right) p_{\mathrm{r}}(z) dz.
		\end{aligned}
	\eequn
\end{proof}

For general reference measure $p_{\mathrm{r}}(\cdot)$, the matrix elements of the $G_{\mathrm{W}}^{ba}, G_{\mathrm{W}}^{aa}$ relate to the first and second moments of the measure which may not have an analytic formula. Here, we consider a special neural mapping model with a Gaussian reference measure, thus rendering the metric with analytic elements.

\begin{corollary}\label{cor:metric-2layerGaussian}
    With the same setting as \cref{prop:metric-2layer} and Gaussian reference measure, the matrix element of the neural mapping metric can be written analytically as 
    \begin{equation}
        \begin{aligned}
            &\lp G_{\mathrm{W}}^{ba} \right)_{ij} = p_{\mathrm{r}}(b_i) -  b_j(1-\mathfrak{F}_0(b_i)), \quad b_i > b_j. \\
            &\lp G_{\mathrm{W}}^{aa} \right)_{ij} = b_ib_j(1-\mathfrak{F}_0(b_i)) - b_jp_{\mathrm{r}}(b_i)+ (1-\mathfrak{F}_0(b_i)), \quad b_i > b_j.
        \end{aligned}
    \end{equation}
    The other half of the elements can be obtained via switching $b_i, b_j$.
\end{corollary}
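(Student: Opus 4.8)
The plan is to specialize the general integral formulas from \cref{prop:metric-2layer} to the standard Gaussian reference $p_{\mathrm{r}}(z)=\frac{1}{\sqrt{2\pi}}e^{-z^2/2}$, so that the only work is evaluating the two truncated integrals $\int_{b}^{\infty}(z-b_j)p_{\mathrm{r}}(z)\,dz$ and $\int_{b}^{\infty}(z-b_i)(z-b_j)p_{\mathrm{r}}(z)\,dz$ in closed form, where $b=\max\{b_i,b_j\}$. By symmetry of the matrices it suffices to treat $b_i>b_j$, so $b=b_i$, which is exactly the case stated; the remaining entries follow by swapping indices.

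First I would record the two elementary Gaussian identities that do all the work: $\int_{b}^{\infty} z\,p_{\mathrm{r}}(z)\,dz = p_{\mathrm{r}}(b)$ (since $z\,p_{\mathrm{r}}(z) = -p_{\mathrm{r}}'(z)$) and $\int_{b}^{\infty} z^2\,p_{\mathrm{r}}(z)\,dz = b\,p_{\mathrm{r}}(b) + (1-\mathfrak{F}_0(b))$ (integrate by parts using $z^2 p_{\mathrm{r}} = -z\,p_{\mathrm{r}}' $, then apply the first identity), together with the trivial $\int_b^\infty p_{\mathrm{r}}(z)\,dz = 1-\mathfrak{F}_0(b)$. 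These three facts are standard moment computations for the truncated normal; I would state them as a short lemma or inline and not belabor the integration-by-parts.

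Next I would substitute. For $(G_{\mathrm{W}}^{ba})_{ij}$ with $b_i>b_j$, \cref{prop:metric-2layer} gives (up to the $a_i/N^2$ prefactor, which I take to be absorbed into the normalization used in the corollary statement) $\int_{b_i}^\infty (z-b_j)p_{\mathrm{r}}(z)\,dz = \int_{b_i}^\infty z\,p_{\mathrm{r}}(z)\,dz - b_j\int_{b_i}^\infty p_{\mathrm{r}}(z)\,dz = p_{\mathrm{r}}(b_i) - b_j(1-\mathfrak{F}_0(b_i))$, matching the claim. For $(G_{\mathrm{W}}^{aa})_{ij}$, expand $(z-b_i)(z-b_j) = z^2 - (b_i+b_j)z + b_ib_j$ and integrate term by term over $[b_i,\infty)$: the $z^2$ term gives $b_i p_{\mathrm{r}}(b_i) + (1-\mathfrak{F}_0(b_i))$, the linear term gives $-(b_i+b_j)p_{\mathrm{r}}(b_i)$, and the constant term gives $b_ib_j(1-\mathfrak{F}_0(b_i))$; summing, the $b_i p_{\mathrm{r}}(b_i)$ terms cancel and one is left with $b_ib_j(1-\mathfrak{F}_0(b_i)) - b_j p_{\mathrm{r}}(b_i) + (1-\mathfrak{F}_0(b_i))$, as stated. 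Finally I would note that the other half of the entries is obtained by exchanging $b_i\leftrightarrow b_j$, since $\max\{b_i,b_j\}$ is symmetric and the integrands in \cref{prop:metric-2layer} are symmetric under $(i,j)\mapsto(j,i)$ in the $aa$ block and transpose-related in the $ba$ block.

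There is essentially no hard part here — the corollary is a direct evaluation — but the one point requiring care is bookkeeping of the $\frac{a_i}{N^2}$ (and $\frac{1}{N^2}$) prefactors and the $a_i$ slope factors appearing in \cref{prop:metric-2layer}: the statement of \cref{cor:metric-2layerGaussian} drops them, so I would either reinstate them for strict consistency or explicitly say that we are displaying the ``reduced'' matrix element with the slope/normalization factors stripped off, exactly as in the $G_{\mathrm{W}}^{aa}$ display of \cref{prop:metric-2layer}. I would also double-check the boundary term at $z=b$ in each integration by parts (it contributes the $p_{\mathrm{r}}(b)$ and $b\,p_{\mathrm{r}}(b)$ terms) and confirm the sign conventions so that the final expressions reproduce the stated formulas verbatim.
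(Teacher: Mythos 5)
Your proposal is correct and follows essentially the same route as the paper: both proofs reduce the corollary to evaluating the truncated Gaussian integrals $\int_{b_i}^{\infty}(z-b_j)p_{\mathrm{r}}(z)\,dz$ and $\int_{b_i}^{\infty}(z-b_i)(z-b_j)p_{\mathrm{r}}(z)\,dz$ via the identity $z\,p_{\mathrm{r}}(z)=-p_{\mathrm{r}}'(z)$ and the resulting first/second truncated moments. Your remark about the dropped $a_i/N^2$ prefactors (and sign) is a fair observation of a bookkeeping convention the paper itself adopts silently, but it does not change the substance of the argument.
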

\begin{proof}
    The proof is obtained by elementary integration calculation
    \begin{equation}
        \begin{aligned}
            & \ \int_{b_i}^{\infty} \lp z - b_j \right)  p_{\mathrm{r}}(z) dz      \\
            = & \ p_{\mathrm{r}}(z)\Big|_{\infty}^{b_i} - b_j(1-\mathfrak{F}_0(b_i)) = p_{\mathrm{r}}(b_i) -  b_j(1-\mathfrak{F}_0(b_i)),     \\
            & \ \int_{b_i}^{\infty} \lp z - b_j \right) \lp z - b_i \right) p_{\mathrm{r}}(z) dz      \\
            = & \ b_ib_j(1-\mathfrak{F}_0(b_i)) - (b_i+b_j)p_{\mathrm{r}}(b_i) + b_ip_{\mathrm{r}}(b_i) + (1-\mathfrak{F}_0(b_i)) \\
            = & \ b_ib_j(1-\mathfrak{F}_0(b_i)) - b_jp_{\mathrm{r}}(b_i)+ (1-\mathfrak{F}_0(b_i)).
        \end{aligned}
    \end{equation}
\end{proof}
Now, we focus on the upper right corner $G_{\mathrm{W}}^{bb}$ of the neural mapping metric. We will establish an analytical formula for the inverse of this matrix.

\begin{theorem}[Analytic inverse of the neural mapping metric]\label{thm:ReLU-inverse-metric}
    The inverse matrix of the $G_{\mathrm{W}}^{bb}$ block in \cref{prop:metric-2layer} can be written analytically as 
    \begin{equation}
        \begin{aligned}
            \frac{1}{N^2}\left( G_{\mathrm{W}}^{-1}(\mathbf{b}) \right)_{ij} = \left\{
            \begin{aligned}
                \frac{1}{a_i^2}\lp \frac{1}{\mathfrak{F}_0(b_{i})-\mathfrak{F}_0(b_{i-1})} + \frac{1}{\mathfrak{F}_0(b_{i+1})-\mathfrak{F}_0(b_i)}\right), \quad i = j \neq 1, N, \\
                \frac{1}{a_i^2}\lp \frac{1}{\mathfrak{F}_0(b_{N})-\mathfrak{F}_0(b_{N-1})} + \frac{1}{1-\mathfrak{F}_0(b_N)}\right), \quad i = j = N, \\
                \frac{1}{a_i^2}\frac{1}{\mathfrak{F}_0(b_{2})-\mathfrak{F}_0(b_{1})}, \quad i = j = 1,\\
                -\frac{1}{a_ia_{i-1}}\frac{1}{\mathfrak{F}_0(b_i) - \mathfrak{F}_0(b_{i-1})}, \quad j = i-1,      \\
                -\frac{1}{a_ia_{i+1}}\frac{1}{\mathfrak{F}_0(b_{i+1}) - \mathfrak{F}_0(b_{i})}, \quad j = i+1,      \\
                0, \qquad \qquad o.w.
            \end{aligned}\right.
        \end{aligned}
    \end{equation}
\end{theorem}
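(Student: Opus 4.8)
The plan is to recognize $G_{\mathrm{W}}^{bb}$ as a diagonally rescaled ``min matrix'' (the covariance matrix of a Brownian‑motion–type vector), whose precision matrix is classically tridiagonal, and then to read off the six cases. Throughout, I assume as in the display of \cref{prop:metric-2layer} that the neurons are labelled so that $b_1<b_2<\cdots<b_N$; this is no loss of generality, since relabelling neurons conjugates $G_{\mathrm{W}}^{bb}$ by a permutation matrix, and under this convention $\max\{b_i,b_j\}=b_{\max(i,j)}$. Writing $D_a=\mathrm{diag}(a_1,\dots,a_N)$, \cref{prop:metric-2layer} gives $G_{\mathrm{W}}^{bb}=\tfrac{1}{N^2}D_a M D_a$ with $M_{ij}=1-\mathfrak{F}_0(b_{\max(i,j)})$, so that $\tfrac1{N^2}\bigl((G_{\mathrm{W}}^{bb})^{-1}\bigr)_{ij}=\tfrac{1}{a_ia_j}(M^{-1})_{ij}$ and it suffices to invert $M$. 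Since $i\mapsto 1-\mathfrak{F}_0(b_i)$ is strictly decreasing (because $p_{\mathrm{r}}>0$ on its support, which is also what makes $G_{\mathrm{W}}$ invertible), we have $M_{ij}=\min\{1-\mathfrak{F}_0(b_i),\,1-\mathfrak{F}_0(b_j)\}$.

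Next I would produce an explicit triangular factorization of $M$. Set $d_k=\mathfrak{F}_0(b_{k+1})-\mathfrak{F}_0(b_k)$ for $1\le k\le N-1$ and $d_N=1-\mathfrak{F}_0(b_N)$; each $d_k>0$. A telescoping sum gives $1-\mathfrak{F}_0(b_m)=\sum_{k=m}^{N}d_k$ for every $m$, hence $M_{ij}=\sum_{k\ge \max(i,j)}d_k=\sum_{k=1}^N U_{ik}\,d_k\,U_{jk}$, i.e. $M=U D_d U^{\ts}$, where $U$ is the upper‑triangular all‑ones matrix ($U_{ik}=1$ if $k\ge i$, else $0$) and $D_d=\mathrm{diag}(d_1,\dots,d_N)$. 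A one‑line check shows $U^{-1}=V$, the upper‑bidiagonal matrix with $V_{ii}=1$, $V_{i,i+1}=-1$, and zeros elsewhere. Therefore $M^{-1}=(U^{\ts})^{-1}D_d^{-1}U^{-1}=V^{\ts}D_d^{-1}V$, a product of a lower‑bidiagonal, a diagonal, and an upper‑bidiagonal matrix, which is automatically tridiagonal.

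Finally I would read off entries from $(M^{-1})_{ij}=\sum_k V_{ki}\,d_k^{-1}\,V_{kj}$: since $V_{k\ell}\ne0$ only for $k\in\{\ell-1,\ell\}$, the sum runs over $k\in\{i-1,i\}\cap\{j-1,j\}$. This yields $(M^{-1})_{ii}=d_{i-1}^{-1}+d_i^{-1}$ for $1<i<N$, $(M^{-1})_{11}=d_1^{-1}$, $(M^{-1})_{NN}=d_{N-1}^{-1}+d_N^{-1}$, $(M^{-1})_{i,i-1}=(M^{-1})_{i-1,i}=-d_{i-1}^{-1}$, and $(M^{-1})_{ij}=0$ for $|i-j|\ge2$. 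Substituting the values of $d_k$ and multiplying by $\tfrac{1}{a_ia_j}$ then reproduces the six cases in the statement. The only real obstacle is the boundary bookkeeping: at $i=1$ only the $k=1$ term survives, while at $i=N$ one of the two surviving terms is the special tail term $d_N=1-\mathfrak{F}_0(b_N)$ rather than a difference of $\mathfrak{F}_0$‑values — this asymmetry is exactly what produces the three distinct diagonal formulas, and it must be tracked carefully. (An alternative, less illuminating route is to verify $G_{\mathrm{W}}^{bb}\cdot(\text{claimed inverse})=I$ directly, exploiting the nested structure of $1-\mathfrak{F}_0(b_{\max(i,j)})$; I prefer the factorization route since it also explains where the formula comes from.)
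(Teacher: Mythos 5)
Your proposal is correct and follows essentially the same route as the paper: both factor $G_{\mathrm{W}}^{bb}=\tfrac{1}{N^2}D\,M\,D$ with $D=\mathrm{diag}(a_1,\dots,a_N)$ and $M_{ij}=1-\mathfrak{F}_0(b_{\max(i,j)})$, reduce to inverting the middle matrix $M$, and conjugate back by $D^{-1}$. The only difference is that where the paper declares the tridiagonal inverse of $M$ ``direct to check,'' you actually derive it via the bidiagonal factorization $M=U D_d U^{\ts}$ (with $U$ the upper-triangular all-ones matrix), a correct and slightly more explicit justification of that verification step rather than a different argument.
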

\begin{proof}
    First, we decompose the neural mapping metric into the following matrix product
    \begin{equation}
        G_{\mathrm{W}} = \frac{1}{N^2}D\begin{pmatrix}
            1-\mathfrak{F}_0(b_1) & 1-\mathfrak{F}_0(b_2) & \cdots & 1-\mathfrak{F}_0(b_N)    \\
            1-\mathfrak{F}_0(b_2) & 1-\mathfrak{F}_0(b_2) & \cdots & 1-\mathfrak{F}_0(b_N)    \\
            \vdots & \vdots & \ddots & \vdots \\
            1-\mathfrak{F}_0(b_N) & 1-\mathfrak{F}_0(b_N) & \cdots & 1-\mathfrak{F}_0(b_N)    \\
        \end{pmatrix} D,
    \end{equation}
    where $D = \diag(a_1, a_2, \cdots, a_N)$ is a diagonal matrix. Then, it is direct to check that the middle matrix has the following tri-diagonal analytic inverse below: 
    \begin{equation}
        \begin{pmatrix}
            \frac{1}{\mathfrak{F}_0(b_2)-\mathfrak{F}_0(b_1)} & -\frac{1}{\mathfrak{F}_0(b_2)-\mathfrak{F}_0(b_1)} & 0 & \cdots & 0     \\
            -\frac{1}{\mathfrak{F}_0(b_2)-\mathfrak{F}_0(b_1)} & \frac{1}{\mathfrak{F}_0(b_2)-\mathfrak{F}_0(b_1)} + \frac{1}{\mathfrak{F}_0(b_3)-\mathfrak{F}_0(b_2)} & -\frac{1}{\mathfrak{F}_0(b_3)-F(b_2)} & \cdots & 0     \\
            \vdots & \vdots & \vdots & \ddots & \vdots \\
            0 & 0 & 0 & \cdots & \frac{1}{F(b_N) - F(b_{N-1})} + \frac{1}{1-F(b_N)}
        \end{pmatrix}.
    \end{equation}
    Multiplying this matrix with the inverse of the diagonal matrix $D$ on both sides concludes this proof.
\end{proof}
This analytic form of the inverse metric will be used intensively in the next subsection to prove the consistency of the numerical scheme based on the ReLU neural network.

\subsection{Truncated error analysis of the neural projected Wasserstein gradient flows based on analytic formula}\label{sec:special-truncated}
In this section, we perform the numerical analysis of the neural mapping projected Wasserstein flows introduced in \cref{sec:neural-projected-Wflow} based on the analytic formula in \cref{sec:analytic-inv}. Because of the analytic inverse of the neural mapping metric, the right-hand side of the Wasserstein projected gradient flow can be calculated explicitly, and one can thus talk about its consistency and order of accuracy following the same spirit as classical numerical analysis. We perform this derivation for the Wasserstein projected gradient flows of the potential functional explicitly.

Let us first recall that the formula for neural projected Wasserstein gradient flow is given by 
\begin{equation}\label{equ:projected-gradient-flow}
\frac{d\theta}{dt}=-G_{\mathrm{W}}^{-1}(\theta)\cdot\mathbb{E}_{\tilde z\sim p_{\mathrm{r}}}\Big[\nabla_\theta V(f(\theta,\tilde z))\Big].
\end{equation}
We have the following analytic formula for the projected gradient flow in the ReLU network model that we introduced in \cref{sec:analytic-inv}.
\begin{proposition}[Wasserstein gradient flow of potential functionals in ReLU network]\label{prop:potential-flow-formula}
    The projected potential flow in the ReLU network model \cref{equ:ReLU-network} has the following form:
    \begin{equation}
        \begin{aligned}
            \dot{b}_i & = \frac{N}{a_i}\lb \frac{\mbE_{z\sim p_{\mathrm{r}}}[V'(f(b, z))\mathbf{1}_{[b_{i}, b_{i+1}]}]}{\mathfrak{F}_0(b_{i+1}) - \mathfrak{F}_0(b_{i})} -  \frac{\mbE_{z\sim p_{\mathrm{r}}}[V'(f(b, z))\mathbf{1}_{[b_{i-1}, b_{i}]}]}{\mathfrak{F}_0(b_{i}) - \mathfrak{F}_0(b_{i-1})}\rb,       \quad i \neq 1, N,      \\
            \dot{b}_N & = \frac{N}{a_N}\lb \frac{\mbE_{z\sim p_{\mathrm{r}}}[V'(f(b, z))\mathbf{1}_{[b_{N}, \infty)}]}{1 - \mathfrak{F}_0(b_{N})} -  \frac{\mbE_{z\sim p_{\mathrm{r}}}[V'(f(b, z))\mathbf{1}_{[b_{N-1}, b_{N}]}]}{\mathfrak{F}_0(b_{N}) - \mathfrak{F}_0(b_{N-1})}\rb ,        \\
            \dot{b}_1 & = \frac{N}{a_1}\frac{\mbE_{z\sim p_{\mathrm{r}}}[V'(f(b, z))\mathbf{1}_{[b_{1}, b_{2}]}]}{\mathfrak{F}_0(b_{2}) - \mathfrak{F}_0(b_{1})}.   \\
        \end{aligned}
    \end{equation}
    Using the trapezoid rule to calculate the integration gives the following spatial discretization, which can be used to simulate the projected gradient flow:
    \begin{equation}\label{equ:potential-flow-discretization}
        \dot{b}_i = \frac{N}{2a_i}\lp V'(f(b, b_{i+1})) - V'(f(b, b_{i-1})) \right).
    \end{equation}
\end{proposition}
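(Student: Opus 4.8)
The plan is to substitute two ingredients already in hand — the parameter derivatives of the ReLU map from \eqref{equ:ReLU-network-derivative}, and the tridiagonal inverse of the metric block from \cref{thm:ReLU-inverse-metric} — directly into the projected gradient flow \eqref{equ:projected-gradient-flow}, and then read off the quadrature. Throughout, the flow is restricted to the knot parameters $\mathbf{b}=(b_1,\dots,b_N)$ (the slopes $a_i$ held fixed, consistent with the moving-mesh viewpoint), so the relevant metric is the block $G_{\mathrm{W}}^{bb}$, and I use the standing ordering $b_1<b_2<\cdots<b_N$, which is what makes that block invertible with the stated inverse.

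\emph{Step 1: Euclidean gradient.} By the chain rule together with $\partial_{b_j}f(\theta,z)=-\tfrac{a_j}{N}\mathbf{1}_{\{z>b_j\}}$ from \eqref{equ:ReLU-network-derivative}, one has $\partial_{b_j}F(\theta)=\mathbb{E}_{z\sim p_{\mathrm{r}}}\big[V'(f(\theta,z))\,\partial_{b_j}f(\theta,z)\big]=-\tfrac{a_j}{N}\,v_j$, where $v_j:=\mathbb{E}_{z\sim p_{\mathrm{r}}}\big[V'(f(\theta,z))\mathbf{1}_{\{z>b_j\}}\big]$. Hence $\dot{\mathbf{b}}=-\big(G_{\mathrm{W}}^{bb}\big)^{-1}\nabla_{\mathbf{b}}F$ reads $\dot b_i=\sum_{j}\big(G_{\mathrm{W}}^{bb}\big)^{-1}_{ij}\,\tfrac{a_j}{N}\,v_j$.

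\emph{Step 2: apply the analytic inverse.} From the proof of \cref{thm:ReLU-inverse-metric}, $G_{\mathrm{W}}^{bb}=\tfrac1{N^2}DMD$ with $D=\mathrm{diag}(a_1,\dots,a_N)$ and $M$ the tridiagonally-invertible middle matrix there, so $\big(G_{\mathrm{W}}^{bb}\big)^{-1}=N^2D^{-1}M^{-1}D^{-1}$. Substituting, the right $D^{-1}$ cancels the $a_j$ in $\tfrac{a_j}{N}v_j$, the left $D^{-1}$ produces the global prefactor $\tfrac1{a_i}$, and the powers of $N$ collapse to a single $N$, leaving $\dot b_i=\tfrac{N}{a_i}\sum_j (M^{-1})_{ij}\,v_j$. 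Since the nonzero entries of $M^{-1}$ are $\pm 1/(\mathfrak{F}_0(b_{k+1})-\mathfrak{F}_0(b_k))$ arranged in the standard second-difference pattern (with the extra bottom-right term $1/(1-\mathfrak{F}_0(b_N))$), row $i$ of this product telescopes to $\dfrac{v_i-v_{i+1}}{\mathfrak{F}_0(b_{i+1})-\mathfrak{F}_0(b_i)}-\dfrac{v_{i-1}-v_i}{\mathfrak{F}_0(b_i)-\mathfrak{F}_0(b_{i-1})}$ for interior $i$ (one-sided at $i=1$, and with the added term $v_N/(1-\mathfrak{F}_0(b_N))$ at $i=N$). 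Finally, because $b_{i-1}<b_i<b_{i+1}$ one has $\mathbf{1}_{\{z>b_i\}}-\mathbf{1}_{\{z>b_{i+1}\}}=\mathbf{1}_{[b_i,b_{i+1}]}$ off a null set, so $v_i-v_{i+1}=\mathbb{E}_{z\sim p_{\mathrm{r}}}[V'(f(b,z))\mathbf{1}_{[b_i,b_{i+1}]}]$, and likewise for the neighboring difference and for $v_N=\mathbb{E}_{z\sim p_{\mathrm{r}}}[V'(f(b,z))\mathbf{1}_{[b_N,\infty)}]$; this gives exactly the three displayed cases.

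\emph{Step 3: trapezoid discretization.} Each quotient $\dfrac{\mathbb{E}_{z\sim p_{\mathrm{r}}}[V'(f(b,z))\mathbf{1}_{[b_i,b_{i+1}]}]}{\mathfrak{F}_0(b_{i+1})-\mathfrak{F}_0(b_i)}$ is the $p_{\mathrm{r}}$-conditional average of $V'(f(b,\cdot))$ over $[b_i,b_{i+1}]$; under the substitution $u=\mathfrak{F}_0(z)$ it equals the ordinary average of $u\mapsto V'(f(b,\mathfrak{F}_0^{-1}(u)))$ on $[\mathfrak{F}_0(b_i),\mathfrak{F}_0(b_{i+1})]$, and the trapezoid rule approximates that average by $\tfrac12\big(V'(f(b,b_i))+V'(f(b,b_{i+1}))\big)$. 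Inserting this for the two consecutive intervals appearing in $\dot b_i$ makes the shared term $\tfrac12 V'(f(b,b_i))$ cancel, leaving $\dot b_i=\tfrac{N}{2a_i}\big(V'(f(b,b_{i+1}))-V'(f(b,b_{i-1}))\big)$, which is \eqref{equ:potential-flow-discretization}.

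The main obstacle is less the (mostly mechanical) algebra than two points that must be stated carefully: (i) the derivation relies essentially on the knots remaining ordered $b_1<\cdots<b_N$ — this is what validates the tridiagonal inverse of \cref{thm:ReLU-inverse-metric} and what collapses the indicator differences into clean intervals; non-degeneracy of this mesh is precisely the content of \cref{prop:mesh-quality-analysis}; and (ii) the boundary intervals — the semi-infinite $[b_N,\infty)$ and the absent $[b_0,b_1]$ — require a one-sided/truncation convention for the trapezoid rule, so \eqref{equ:potential-flow-discretization} is to be understood at interior knots, with the endpoints governed by the corresponding boundary terms in the exact formula. One also has to keep the $N$- and $a_i$-normalizations straight, since \cref{thm:ReLU-inverse-metric} reports $\tfrac1{N^2}(G_{\mathrm{W}}^{-1})$ rather than the inverse itself.
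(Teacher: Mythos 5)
Your proposal is correct and follows essentially the same route as the paper's proof: compute the Euclidean gradient $\partial_{b_j}F=-\tfrac{a_j}{N}\mathbb{E}_{z\sim p_{\mathrm{r}}}[V'(f(b,z))\mathbf{1}_{[b_j,\infty)}]$, contract with the tridiagonal analytic inverse from \cref{thm:ReLU-inverse-metric} so the indicator functions telescope into interval averages, and then apply the trapezoid quadrature so the shared midpoint terms cancel. Your added remarks on the $DMD$ normalization, the ordering of the knots, and the boundary conventions only make explicit what the paper leaves implicit.
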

\begin{proof}
    It suffices to calculate the gradient of the linear potential functional in this model. Let us start with the calculation of the functional form of the potential energy in the ReLU network mapping model as follows
    \begin{equation}
        \begin{aligned}
            \mbE_{x \sim f_{b \#}p_{\mathrm{r}}}[V(x)] = & \ \mbE_{z\sim p_{\mathrm{r}}}[V(f(b, z))],
        \end{aligned}
    \end{equation}
    where we use the change of the integration variable above. Therefore, the gradient of this functional w.r.t. $b$ can be simplified to
    \begin{equation}
        \begin{aligned}
            \partial_{b_i}\mbE_{z\sim p_{\mathrm{r}}}[V(f(b, z))] = & \ \mbE_{z\sim p_{\mathrm{r}}}[\partial_{b_i}V(f(b, z))] = -\frac{a_i}{N}\mbE_{z\sim p_{\mathrm{r}}}[V'(f(b, z))\mathbf{1}_{[b_i, \infty)}(z)],
        \end{aligned}
    \end{equation}
    where we use $\mathbf{1}_A$ to denote the characteristic function on the interval $A$. Now, plugging this result into the projected gradient flow \cref{equ:projected-gradient-flow} with the analytical formula for the inverse matrix $G_{\mathrm{W}}^{-1}$ in \cref{thm:ReLU-inverse-metric}, we obtain
    \begin{equation}
        \begin{aligned}
            \dot{b_i} = & \ \frac{N^2}{a_i^2}\lp \frac{1}{\mathfrak{F}_0(b_{i})-\mathfrak{F}_0(b_{i-1})} + \frac{1}{\mathfrak{F}_0(b_{i+1})-\mathfrak{F}_0(b_i)}\right)\frac{a_i}{N}\mbE_{z\sim p_{\mathrm{r}}}[V'(f(b, z))\mathbf{1}_{[b_i, \infty)}(z)]     \\
            & \ - \frac{N^2}{a_ia_{i-1}}\frac{1}{\mathfrak{F}_0(b_i) - \mathfrak{F}_0(b_{i-1})}\frac{a_{i-1}}{N}\mbE_{z\sim p_{\mathrm{r}}}[V'(f(b, z))\mathbf{1}_{[b_{i-1}, \infty)}(z)] \\
            & \ - \frac{N^2}{a_ia_{i+1}}\frac{1}{\mathfrak{F}_0(b_{i+1}) - \mathfrak{F}_0(b_{i})}\frac{a_{i+1}}{N}\mbE_{z\sim p_{\mathrm{r}}}[V'(f(b, z))\mathbf{1}_{[b_{i+1}, \infty)}(z)]       \\
            = & \ \frac{N}{a_i}\lb \frac{\mbE_{z\sim p_{\mathrm{r}}}[V'(f(b, z))\mathbf{1}_{[b_{i}, b_{i+1}]}(z)]}{\mathfrak{F}_0(b_{i+1}) - \mathfrak{F}_0(b_{i})} -  \frac{\mbE_{z\sim p_{\mathrm{r}}}[V'(f(b, z))\mathbf{1}_{[b_{i}, b_{i+1}]}(z)]}{\mathfrak{F}_0(b_{i}) - \mathfrak{F}_0(b_{i-1})} \rb.
        \end{aligned}
    \end{equation}
    Taking a close look at the terms inside the brackets, one finds that they are calculating the average value of $V'$ inside the intervals $[b_{i-1}, b_i], [b_i, b_{i+1}]$ weighted by the base distribution $p_{\mathrm{r}}(\cdot)$. Lastly, in order to complete the spatial discretization, one needs to choose a quadrature rule to calculate the integration in the above formula. One example is the trapezoid rule:
    \begin{equation*}
        \mbE_{z\sim p_{\mathrm{r}}}[V'(f(b, z))\mathbf{1}_{[b_{i}, b_{i+1}]}(z)] \approx (\mathfrak{F}_0(b_{i+1}) - \mathfrak{F}_0(b_{i}))\frac{V'(f(b, b_i)) + V'(f(b, b_{i+1}))}{2},
    \end{equation*}
    which provides the desired discretization. Special attention should be paid to the boundary node $b_1, b_N$ to obtain their corresponding evolution equation and discretization.
\end{proof}
Given this spatial discretization, we can analyze the order of consistency of it, which is treated in the following proposition.
\begin{proposition}[Consistency of the projected gradient flow]\label{prop:potential-flow-consistency}
	Assume potential functional satisfies $\norml V'' \normr_{\infty} < \infty$. The spatial discretization \cref{equ:potential-flow-discretization} is of first-order accuracy both in the mapping and the density coordinates.
\end{proposition}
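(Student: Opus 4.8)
The plan is to substitute the neural‑network state $(a,b(t))$ into both the exact dynamics and the discrete scheme \eqref{equ:potential-flow-discretization}, and to estimate the mismatch cell by cell using only the Lipschitz bound $\mathrm{Lip}(V')=\|V''\|_\infty$. Throughout I keep the standing assumptions of \cref{sec:analytic-inv}: $a_i>0$ and $b_1<\cdots<b_N$, so $f(b,\cdot)$ is a strictly increasing, piecewise‑linear map with breakpoints $b_k$, slope $s_k=\frac1N\sum_{j\le k}a_j$ on $(b_k,b_{k+1})$, and image nodes $x_k:=f(b,b_k)$ with $x_{k+1}-x_k=s_k(b_{k+1}-b_k)$. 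Because $f(b,z)$ is affine on each $(b_k,b_{k+1})$, the function $z\mapsto V'(f(b,z))$ is $(\|V''\|_\infty s_k)$‑Lipschitz there; this is the only regularity I will use, and it is precisely what caps the final order at one.

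The first ingredient is a weighted‑trapezoid estimate on each cell: since the $p_{\mathrm r}$‑weighted average of $V'(f(b,\cdot))$ over $[b_k,b_{k+1}]$ lies within $\|V''\|_\infty(x_{k+1}-x_k)$ of $V'(x_k)$, while $\tfrac12(V'(x_k)+V'(x_{k+1}))$ also lies within $\tfrac12\|V''\|_\infty(x_{k+1}-x_k)$ of $V'(x_k)$, the trapezoid rule of \cref{prop:potential-flow-formula} commits an $O(\|V''\|_\infty(x_{k+1}-x_k))$ error; the semi‑infinite cell $[b_N,\infty)$ is treated identically up to a term controlled by the tail of $p_{\mathrm r}$. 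The second, and crucial, ingredient is a telescoping identity that prevents the $N$ per‑coordinate errors from accumulating to $O(Nh)$. From $\partial_{b_i}f(b,z)=-\tfrac{a_i}{N}\mathbf 1_{\{z>b_i\}}$ one gets, for $z\in(b_k,b_{k+1})$, the Lagrangian velocity $\partial_t f(b,z)=-\tfrac1N\sum_{i=1}^{k}a_i\dot b_i$; inserting the discrete velocities $a_i\dot b_i=\tfrac N2(V'(x_{i+1})-V'(x_{i-1}))$ (with the one‑sided conventions at $i=1,N$) telescopes to $\partial_t f(b,z)=-\tfrac12(V'(x_k)+V'(x_{k+1}))$, and the same computation carried out with the weighted averages in place of the trapezoid values reproduces the exact projected velocity, namely the $p_{\mathrm r}$‑weighted mean of $V'(f(b,\cdot))$ on $[b_k,b_{k+1}]$.

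Consistency in the mapping coordinate then follows at once: on $(b_k,b_{k+1})$ the scheme's Lagrangian velocity differs from the exact one $-V'(f(b,z))$ of \eqref{Lagrangian GF} by $V'(f(b,z))-\tfrac12(V'(x_k)+V'(x_{k+1}))$, and since $f(b,z)\in[x_k,x_{k+1}]$ and $V'$ is Lipschitz this is $O(\|V''\|_\infty(x_{k+1}-x_k))=O(\|V''\|_\infty h)$, first order in the physical mesh width (equivalently the latent one, up to the factor $\max_k s_k$); the telescoping simultaneously shows that the quadrature part (the gap between \eqref{equ:potential-flow-discretization} and \cref{prop:potential-flow-formula}) and the projection part (the gap between \cref{prop:potential-flow-formula} and \eqref{Lagrangian GF}) are each $O(h)$. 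For the density coordinate I pass to the integrated continuity equation: the scheme transports mass by the piecewise‑constant field $v_{\mathrm{sch}}(x)=-\tfrac12(V'(x_k)+V'(x_{k+1}))$ on $(x_k,x_{k+1})$, so $\|v_{\mathrm{sch}}+V'\|_{L^\infty}=O(\|V''\|_\infty h)$ on $\mathrm{supp}\,p$; since in one dimension the continuity equation integrates to $\partial_t\mathfrak F(t,x)=p(t,x)V'(x)$ for the WGF and to $\partial_t\mathfrak F(t,x)=-p(t,x)v_{\mathrm{sch}}(x)$ for the scheme, the truncation error in the density/CDF form equals $p(t,x)(V'(x)+v_{\mathrm{sch}}(x))=O(\|V''\|_\infty h)$ — equivalently, tested against any $\phi\in C^1_c$, $|\int\phi\,(\partial_t p+\partial_x(pv_{\mathrm{sch}}))|\le\|\phi'\|_{L^1}\|p\|_\infty\,O(\|V''\|_\infty h)$.

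I expect the main obstacle to be bookkeeping rather than any single inequality: (i) pushing the telescoping cleanly through the two non‑generic nodes $b_1,b_N$ and, above all, the semi‑infinite cell $(b_N,\infty)$, where $f(b,z)$ keeps growing while $v_{\mathrm{sch}}$ stays frozen at $-V'(x_N)$, so that a decay hypothesis on $p_{\mathrm r}$ (automatic for the Gaussian reference of \cref{cor:metric-2layerGaussian}) is needed to keep that cell's contribution $O(h)$; and (ii) being explicit about which mesh width controls which coordinate, since the natural bound produces $x_{k+1}-x_k$ and converting to the latent‑space width costs a factor $\max_k s_k$, which must be absorbed into the constant using the non‑degeneracy of the mesh guaranteed by \cref{prop:mesh-quality-analysis}.
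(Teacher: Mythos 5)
Your proposal is correct, and it splits naturally into a part that coincides with the paper and a part that takes a genuinely different route. For the mapping coordinate you do exactly what the paper does: expand $\partial_t f(b,z)=\sum_i\dot b_i\,\partial_{b_i}f(b,z)$ with $\partial_{b_i}f=-\tfrac{a_i}{N}\mathbf 1_{[b_i,\infty)}$, telescope the discrete velocities to the piecewise-constant field $-\tfrac12\bigl(V'(f(b,b_k))+V'(f(b,b_{k+1}))\bigr)$ on each cell, and compare with $-V'(f(b,z))$ via the Lipschitz bound on $V'$; your version is more explicit about the constants (the factor $\max_k s_k$ converting latent to physical mesh width, and the separate control of the quadrature error versus the projection error), which the paper leaves implicit. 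For the density coordinate your route is different and, in my view, cleaner: the paper differentiates $p(t,\cdot)$ directly in $\theta$, which produces Dirac masses at the image nodes $f(b,b_i)$ and forces the conclusion to be stated "in the weak sense"; you instead integrate the continuity equation up to the CDF level, where the comparison between the scheme and the exact flow reduces to the sup-norm bound $\|v_{\mathrm{sch}}+V'\|_{L^\infty}=O(\|V''\|_\infty h)$ on the transporting velocity fields and no distributional terms appear. This buys a genuinely pointwise (or cleanly tested) first-order statement at the price of working with $\mathfrak F$ rather than $p$. Your flagged obstacles are real and are in fact glossed over by the paper as well: the generic formula at $i=1,N$ requires the one-sided conventions (the paper's "slight cheat"), and the semi-infinite cell $(b_N,\infty)$ — where $x_{k+1}-x_k$ is not small — does need the decay of $p_{\mathrm r}$ to keep its contribution $O(h)$; one further bookkeeping item you might add is the atom of $f_{b\#}p_{\mathrm r}$ at $0$ of mass $\mathfrak F_0(b_1)$ coming from the ReLU collapsing $(-\infty,b_1]$, which affects the CDF comparison on the leftmost cell in the same benign way.
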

\begin{proof}
    We prove this statement from two directions, i.e. consistency in the space of mapping distribution and consistency in the space of mapping function. We have
    \begin{equation}
        \begin{aligned}
            \partial_t f(b(t), z) = & \ \dot{b}^T \partial_b f(b, z)        \\
            = & \ -\sum_{i=1}^N\frac{N}{2a_i}\lp V'(f(b, b_{i+1})) - V'(f(b, b_{i-1})) \right) \frac{a_i}{N}\mathbf{1}_{[b_i, \infty)}(z)      \\
            = & \ -\sum_{i=1}^N\frac{V'(f(b, b_{i+1})) - V'(f(b, b_{i-1}))}{2}\mathbf{1}_{[b_i, \infty)}(z) \\
            = & \ -\frac{V'(f(b, b_{i+1}))+V'(f(b, b_{i}))}{2}, \quad z \in [b_i, b_{i+1}].
        \end{aligned}
    \end{equation}
    In the above derivation, we slightly cheat in the derivation so we can use the consistent formula for the evolution equations for all the nodes $b_i$. It is easy to conclude that our discretization corresponds to the evolution of the mapping function $f$ of constant speed $-\frac{V'(f(b, b_{i+1}))+V'(f(b, b_{i}))}{2}$ on each interval $[b_i, b_{i+1}]$. Now, recall that in mapping space, the Wasserstein gradient flow of the potential function $V(x)$ corresponds to the velocity field $-V'(x)$. Therefore, given that the length of each interval is of order $\Delta b$, we conclude that our spatial discretization is first order consistent on the mapping space.
    
    Next, we prove the statement for the mapping distribution. To do this, we need to derive the evolution equation for the mapping distribution according to \cref{equ:potential-flow-discretization}. We have for $x \in [f(b, b_i), f(b, b_{i+1})]$
    \begin{equation}\label{equ:potential-parametric}
        \begin{aligned}
            & \ \partial_t p(t, x) = \dot{b}^T \partial_b p(t, x)        \\
            = & \ \sum_{i=1}^N\frac{N}{2a_i}\lp V'(f(b, b_{i+1})) - V'(f(b, b_{i-1})) \right) \frac{a_i}{N}\lp \partial_x p(t, x)\mathbf{1}_{[f(b, b_i), \infty)}(x) + p(t, x)\delta_{f(b, b_i)}(x) \right)   \\
            = & \ \partial_x p(t, x)\sum_{i=1}^N\frac{V'(f(b, b_{i+1})) - V'(f(b, b_{i-1}))}{2}\mathbf{1}_{[f(b, b_i), \infty)}(x)       \\
            & \ + p(t, x)\sum_{i=1}^N\frac{V'(f(b, b_{i+1})) - V'(f(b, b_{i-1}))}{2}\delta_{f(b, b_i)}(x) \\
            = & \ \partial_x p(t, x)\frac{V'(f(b, b_{i+1})) + V'(f(b, b_{i}))}{2} - p(t, x)\frac{V'(f(b, b_{i+1})) + V'(f(b, b_{i-1}))}{2}\delta_{f(b, b_i)}(x).
        \end{aligned}
    \end{equation}
    A quick method to derive the formula of $\partial_b p(t, x)$ is to view it as a probability flow corresponds to the cotangent vector $\partial_{b}f$ and then use the Wasserstein metric to calculate via a continuity equation as in \cref{prop:NGF}. Recall that the potential gradient flow in the density manifold is given by
    \begin{equation}\label{equ:potential-global}
        \partial_t p(t, x) = \nabla \cdot (p(t, x)\nabla V(x)) = \partial_xp(t, x)\partial_x V(x) + \partial_{xx} p(t, x) V(x).
    \end{equation}
    Comparing \cref{equ:potential-parametric} and \cref{equ:potential-global}, it is not difficult to recognize that the first term in \cref{equ:potential-parametric} approximates the continuous counterpart in \cref{equ:potential-global} in the first order. The remaining parts correspond to each other: the approximation is first order not in the strong sense, but rather in the weak sense as there is Dirac measure in \cref{equ:potential-parametric}. Combining the above two parts, we finish the proof. 
\end{proof}
\subsubsection{Projected dynamics of Negative entropy gradient flow}\label{sec:gradient_neg_entropy}
The potential functional can be viewed as a linear functional whose projected gradient flow has a rather simple expression. The corresponding formula has a more complex expression for general nonlinear internal energy, such as entropy. We begin with calculating the negative entropy functional of a neural mapping measure $f_{\#}p_{\mathrm{r}}$:
\begin{equation}\label{equ:entropy}
		\begin{aligned}
			H\lp f_{\#}p_{\mathrm{r}} \right) = & \ \mbE_{x \sim cont\lp f_{\#}p_{\mathrm{r}} \right)}\lb \log  f_{\#}p_{\mathrm{r}}\lp x \right) \rb + \mathfrak{F}_0\lp b_1 \right) \log \mathfrak{F}_0\lp b_1 \right)		\\
			= & \ \mbE_{z \sim cont\lp p_{\mathrm{r}} \right)}\lb \log  f_{\#}p_{\mathrm{r}}\lp f\lp z \right) \right) \rb	+ \mathfrak{F}_0\lp b_1 \right) \log \mathfrak{F}_0\lp b_1 \right)	\\
			= & \ \mbE_{z \sim cont\lp p_{\mathrm{r}} \right)}\lb \log  \frac{p_{\mathrm{r}}\lp  z \right)}{f'\lp z \right)} \rb + \mathfrak{F}_0\lp b_1 \right) \log \mathfrak{F}_0\lp b_1 \right)		\\
			= & \ \mbE_{z \sim cont\lp p_{\mathrm{r}} \right)}\lb \log  p_{\mathrm{r}}\lp  z \right) \rb - \mbE_{z \sim cont\lp p_{\mathrm{r}} \right)}\lb {\log f'\lp z \right)} \rb + \mathfrak{F}_0\lp b_1 \right) \log \mathfrak{F}_0\lp b_1 \right),
		\end{aligned}
	\end{equation}
	where we use the Monge-Amp\`ere equation $f_{\#}p_{\mathrm{r}}\lp f\lp z \right) \right) = \frac{p_{\mathrm{r}}(z)}{f'\lp z \right)}$ in one dimension. Moreover, notice that the last term corresponds to the entropy of the discrete part of distribution $f_{\#} p_{\mathrm{r}}$ as the ReLU mapping function maps $(-\infty, b_1]$ to $0$ and $cont\lp \cdot \right)$ refers to the continuous part of a distribution. Similarly, the relative entropy functional is given by
	\bequn
		\begin{aligned}
			\KL\lp f_{\#}p_{\mathrm{r}} \big\| \nu \right) = & \ \quad\mbE_{z \sim cont\lp p_{\mathrm{r}} \right)}\lb \log  p_{\mathrm{r}}\lp  z \right) \rb - \mbE_{z \sim cont\lp p_{\mathrm{r}} \right)}\lb {\log f'\lp z \right)} \rb 	\\
			& -\ \mbE_{z \sim cont\lp p_{\mathrm{r}} \right)}\lb {\log \nu\lp f\lp z \right) \right)} \rb + \mathfrak{F}_0\lp b_1 \right) \left(\log \mathfrak{F}_0\lp b_1 \right)\right).
		\end{aligned}
	\eequn
		Moreover, the gradient flow of the KL-divergence differs from that of negative entropy
		only by a term that appears in the derivation in the potential functional gradient flow. This
		This also manifests in calculus on the density manifold between the heat and Fokker-Planck equations.
	Now, one calculates the derivative of continuous parts w.r.t. parameter $b_i$
	\bequn
		\begin{aligned}
			\partial_{b_i}\mbE_{x \sim p_{\mathrm{r}}}\lb {\log f'\lp x \right)} \rb & \ = \lbb\begin{aligned}
				& \log \frac{\sum_{j = 1}^{i - 1} a_j}{\sum_{j = 1}^{i} a_j} p_{\mathrm{r}}\lp b_i \right), \quad i \neq 1,		\\
				& - p_{\mathrm{r}}\lp b_1 \right) \frac{\log a_1}{N} , \hspace{1cm} i = 1.
			\end{aligned}\right.		\\
			\partial_{b_i}\mbE_{x \sim p_{\mathrm{r}}}\lb {\log \nu\lp f\lp x \right) \right)} \rb & \ = \mbE_{x \sim p_{\mathrm{r}}}\lb \frac{\nu'\lp y\lp x \right) \right) \partial_{b_i}y\lp x \right)}{\nu\lp y\lp x \right) \right)} \rb.
		\end{aligned}
	\eequn
	The first derivation is based on the observation that the function $\log f'\lp x \right)$ is a step function which changes its value at $b_i$. It takes value $\log \sum_{j = 1}^{i - 1} a_j$ at interval $\lb b_{i - 1}, b_i \rb$. Hence the desired conclusion follows, where $p_{\mathrm{r}}\lp b_i \right)$ comes in since this is the expectation w.r.t. distribution $p_{\mathrm{r}}\lp x \right)$. Therefore, the derivative of the entropy and relative entropy functional reads as follows
	\begin{equation}\label{equ:dHdb}
		\begin{aligned}
			\partial_{b_i} H\lp f_{\#}p_{\mathrm{r}} \right) = & \ \lbb\begin{aligned}
				& -\log \frac{\sum_{j = 1}^{i - 1} a_j}{\sum_{j = 1}^{i} a_j} p_{\mathrm{r}}\lp b_i \right), \hspace{2.1cm} i \neq 1,		\\
				& p_{\mathrm{r}}\lp b_1 \right)\lp \log \mathfrak{F}_0\lp b_1 \right) + 1 + \log \frac{a_1}{N} \right), \quad i = 1.
			\end{aligned}\right.		\\
			\partial_{b_i} \KL\lp f_{\#}p_{\mathrm{r}} \big\| \nu \right) = & \ \mbE_{x \sim p}\lb \frac{\nu'\lp f\lp x \right) \right) \partial_{b_i}f\lp x \right)}{\nu\lp f\lp x \right) \right)} \rb - \log \frac{\sum_{j = 1}^{i} a_j}{\sum_{j = 1}^{i - 1} a_j} p_{\mathrm{r}}\lp b_i \right).
		\end{aligned}
	\end{equation}
	With all these preparations, we can write out the gradient flow equation of the entropy functional:
\bequ\label{equ:heat-scheme}
	\begin{aligned}
		\dot{b_i} = & \ \frac{1}{\mathfrak{F}_0\lp b_{i} \right) - \mathfrak{F}_0\lp b_{i - 1} \right)}\lp \frac{\log\frac{\sum_{j = 1}^{i-1} a_j}{\sum_{j = 1}^{i} a_j} p_{\mathrm{r}}\lp b_i \right)}{a_i^2} - \frac{\log\frac{\sum_{j = 1}^{i - 2} a_j}{\sum_{j = 1}^{i - 1} a_j} p_{\mathrm{r}}\lp b_{i - 1} \right)}{a_ia_{i - 1}} \right) \\
		& \ + \frac{1}{\mathfrak{F}_0\lp b_{i + 1} \right) - \mathfrak{F}_0\lp b_{i} \right)}\lp \frac{\log\frac{\sum_{j = 1}^{i-1} a_j}{\sum_{j = 1}^{i} a_j} p_{\mathrm{r}}\lp b_i \right)}{a_i^2} - \frac{\log\frac{\sum_{j = 1}^{i} a_j}{\sum_{j = 1}^{i+1} a_j} p_{\mathrm{r}}\lp b_{i +1} \right)}{a_ia_{i + 1}} \right),	\quad i = 2, \cdots, N-1,	\\
		\dot{b_1} = & \ -\frac{1}{a_1(\mathfrak{F}_0(b_2) - \mathfrak{F}_0(b_1))}\lp \frac{p_{\mathrm{r}}\lp b_1 \right)\lp \log \mathfrak{F}_0\lp b_1 \right) + 1 + \log \frac{a_1}{N} \right)}{a_1} + \frac{\log \frac{\sum_{j = 1}^{1} a_j}{\sum_{j = 1}^{2} a_j} p_{\mathrm{r}}\lp b_2 \right)}{a_2} \right),		\\
		\dot{b_N} = & \ \frac{\log\frac{\sum_{j = 1}^{N-1} a_j}{\sum_{j = 1}^{N} a_j} p_{\mathrm{r}}\lp b_N \right)}{a_N^2(1 - \mathfrak{F}_0\lp b_{N} \right))} - \frac{1}{\mathfrak{F}_0\lp b_{N} \right) - \mathfrak{F}_0\lp b_{N - 1} \right)}\lp \frac{\log\frac{\sum_{j = 1}^{N - 2} a_j}{\sum_{j = 1}^{N-1} a_j} p_{\mathrm{r}}\lp b_{N - 1} \right)}{a_Na_{N - 1}} - \frac{\log\frac{\sum_{j = 1}^{N-1} a_j}{\sum_{j = 1}^{N} a_j} p_{\mathrm{r}}\lp b_N \right)}{a_N^2} \right).
	\end{aligned}
\eequ
Similar to the proof in \cref{prop:potential-flow-consistency}, one can carefully expand the neural projected dynamics of the entropy functional and prove that it converges to the heat equation in the limit that number of neurons tends to infinity and the gap between neurons nodes tends to zero. 

\subsubsection{Analysis of the long-time existence of the neural-projected heat flow}

In general, the projected Wasserstein gradient flow does not necessarily need to be a linear dynamics even though the original gradient flow is linear, e.g., the projected gradient flow corresponding to the heat equation is highly nonlinear. This poses great difficulties in analyzing and establishing the long-time existence of the projected dynamics, as mentioned in \cite{doi:10.1137/20M1344986}. Specifically, we focus on the nonlinear projected gradient flow of the entropy, which corresponds to the Heat equation in the full space. If we view all nodes $b_i, i \in [N]$ as grid points and view the scheme as an example of the moving mesh method \cite{huang2010adaptive}, then the mesh quality is an important quantity to observe during simulation. One does not want
the mesh quality to decrease too much and even become degenerate during the simulations. Therefore, we consider the well-posedness of the non-linear ODE \cref{equ:heat-scheme}.

\begin{proposition}\label{prop:mesh-quality-analysis}
    The neural projected dynamics \cref{equ:heat-scheme} of the heat flow is well-posed, e.g. the solution extends to arbitrary time.
\end{proposition}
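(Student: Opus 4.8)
The plan is to read \eqref{equ:heat-scheme} as an autonomous ODE on the open \emph{ordered-node domain}
\[
  \mathcal{U} \;=\; \bigl\{\,\mathbf{b}=(b_1,\dots,b_N)\in\mathbb{R}^N \ :\ b_1<b_2<\cdots<b_N\,\bigr\},
\]
and to show its maximal solution cannot reach $\partial\mathcal{U}$ (two nodes coinciding, or a node escaping to $\pm\infty$) in finite time. First I would record that, for a smooth strictly positive reference density $p_{\mathrm{r}}$ --- in particular the Gaussian ---, $\mathfrak{F}_0$ is a smooth increasing diffeomorphism $\mathbb{R}\to(0,1)$, so on $\mathcal{U}$ every denominator in \eqref{equ:heat-scheme} (the increments $\mathfrak{F}_0(b_{i+1})-\mathfrak{F}_0(b_i)$, the end masses $\mathfrak{F}_0(b_1)$ and $1-\mathfrak{F}_0(b_N)$, the partial sums $\sum_{j\le k}a_j$, and the $a_i$) is strictly positive and the right-hand side is $C^\infty$ there. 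Picard--Lindel\"of then provides a unique maximal solution on $[0,T^\ast)$ with continuous dependence on data --- which already gives every part of well-posedness except global existence --- and the escape lemma says that if $T^\ast<\infty$ then the solution eventually leaves every compact subset of $\mathcal{U}$.

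It is convenient to pass to the CDF coordinates $u_i:=\mathfrak{F}_0(b_i)$, in which $\mathcal{U}$ becomes the bounded set $\Delta=\{0<u_1<\cdots<u_N<1\}$ and $\dot u_i=p_{\mathrm{r}}(b_i)\,\dot b_i$. Writing $g_1=u_1$, $g_i=u_i-u_{i-1}$ ($2\le i\le N$), $g_{N+1}=1-u_N$, the escape statement becomes: $\min_i g_i(t)\to 0$ along some sequence $t\uparrow T^\ast$. So the whole proof reduces to an a priori lower bound $\inf_{[0,T^\ast)}\min_i g_i(t)>0$ depending only on the initial configuration and the fixed data $N,(a_i),p_{\mathrm{r}}$. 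I would encode this by the barrier functional
\[
  \Phi(\mathbf{u}) \;=\; -\log u_1 \;-\; \sum_{i=1}^{N-1}\log\bigl(u_{i+1}-u_i\bigr)\;-\;\log\bigl(1-u_N\bigr),
\]
which is finite on $\Delta$ and tends to $+\infty$ precisely as some $g_i\to0$; the target is the differential inequality $\tfrac{d}{dt}\Phi(\mathbf{u}(t))\le C_1+C_2\,\Phi(\mathbf{u}(t))$ along the flow, after which Gr\"onwall keeps $\Phi$ finite on $[0,T^\ast)$ and contradicts the escape, so $T^\ast=\infty$.

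The analytic core is an \emph{entropic-repulsion} estimate. Put $A_k=\sum_{j=1}^k a_j$ and $c_k=\log(A_{k-1}/A_k)<0$ (the $\log$-coefficients appearing in \eqref{equ:heat-scheme}). Since $\log$ is strictly concave, its secant slopes $-c_k/a_k=(\log A_k-\log A_{k-1})/a_k$ are strictly decreasing in $k$, i.e. one gets the elementary but decisive inequality $c_i/a_i>c_{i-1}/a_{i-1}$ for every $i$. Differentiating $g_i$ along \eqref{equ:heat-scheme} and separating the part singular as $g_i\to0$, one obtains, with $\rho_k:=p_{\mathrm{r}}(b_k)>0$,
\[
  \dot g_i \;=\; \frac{1}{g_i}\Bigl(\tfrac{\rho_i}{a_i}+\tfrac{\rho_{i-1}}{a_{i-1}}\Bigr)\Bigl(c_i\tfrac{\rho_i}{a_i}-c_{i-1}\tfrac{\rho_{i-1}}{a_{i-1}}\Bigr)\;+\;\bigl(\text{terms bounded once }g_{i-1},g_{i+1}\text{ are}\bigr),
\]
and when $g_i$ is small $b_i\approx b_{i-1}$, hence $\rho_i\approx\rho_{i-1}$ and the second factor is $\approx\rho_{i-1}(c_i/a_i-c_{i-1}/a_{i-1})>0$: the leading term of $\dot g_i$ is positive, i.e. the flow repels at small interior gaps. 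For the end gaps the mechanism is cleaner: in the $\dot b_1$ equation the bracket $\tfrac{\rho_1}{a_1}\bigl(\log u_1+1+\log\tfrac{a_1}{N}\bigr)+\tfrac{c_2\rho_2}{a_2}$ is negative once $u_1<e^{-1}N/a_1$ (because $\log u_1\to-\infty$ and $c_2<0$), so $\dot u_1>0$ there and a first-crossing argument pins $u_1(t)\ge\min\{u_1(0),\,e^{-1}N/(2a_1)\}$; symmetrically, the Gaussian tail estimate $p_{\mathrm{r}}(b_N)/(1-\mathfrak{F}_0(b_N))\to\infty$ as $b_N\to\infty$ forces the term $\tfrac{c_N\rho_N}{a_N^2(1-u_N)}$ to dominate $\dot u_N$ with a negative sign, bounding $1-u_N$ below. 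Substituting these sign facts into $\tfrac{d}{dt}\Phi$, the $1/g_i^{2}$-type contributions enter with the favourable (negative) sign and dominate the remaining mixed $1/(g_ig_{i+1})$ terms, yielding the desired inequality.

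The step I expect to be the real obstacle is precisely this last one: upgrading ``leading order is repulsive'' to a genuine bound $\dot\Phi\le C_1+C_2\Phi$. The repulsion coefficient $(\rho_i/a_i+\rho_{i-1}/a_{i-1})(c_i\rho_i/a_i-c_{i-1}\rho_{i-1}/a_{i-1})$ degenerates exactly where a node runs to $\pm\infty$ (where $\rho_k\to0$), so the interior-gap and end-gap estimates cannot be decoupled --- the genuinely delicate configuration is a gap $g_i$ collapsing \emph{simultaneously} with $u_1\to0$ or $u_N\to1$ --- and the $\dot b_1,\dot b_N$ equations have a different algebraic shape from the interior ones, so the two boundary terms of $\Phi$ need separate bookkeeping. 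Everything else (the local theory, the change of variables, and the Gr\"onwall step) is routine, and once the single barrier estimate is established, global existence --- hence well-posedness of \eqref{equ:heat-scheme} --- follows.
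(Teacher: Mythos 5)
Your core mechanism is the same one the paper uses: differentiate the gap along \eqref{equ:heat-scheme}, isolate the term singular as the gap collapses, and observe that its sign is repulsive. Your concavity observation $c_i/a_i>c_{i-1}/a_{i-1}$ (secant slopes of $\log$ over $[A_{k-1},A_k]$ decrease) is in fact a clean generalization of the paper's unit-weight computation, where the same positivity appears as $\log\frac{(i-1)^2}{i(i-2)}>0$ in \eqref{equ:common-term}; and your formula $\dot g_i=\frac{1}{g_i}\bigl(\frac{\rho_i}{a_i}+\frac{\rho_{i-1}}{a_{i-1}}\bigr)\bigl(\frac{c_i\rho_i}{a_i}-\frac{c_{i-1}\rho_{i-1}}{a_{i-1}}\bigr)+O(1)$ is exactly the paper's statement that the common term enters $\dot b_i$ with a plus sign and $\dot b_{i-1}$ with a minus sign. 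Where you diverge is in ambition: you try to package the repulsion into a logarithmic barrier $\Phi$ and a Gr\"onwall inequality $\dot\Phi\le C_1+C_2\Phi$, which, if established, would give a genuine a priori bound and hence global existence in the Picard--Lindel\"of sense.

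That final step is a genuine gap, and you correctly flag it yourself. Two concrete issues. First, the inequality $\dot\Phi\le C_1+C_2\Phi$ is stated as a target, not derived: the favourable $1/g_i^2$ terms must be shown to dominate the cross terms $1/(g_ig_{i\pm1})$ \emph{uniformly} over $\Delta$, and no such estimate is given. Second, and more seriously, the repulsion coefficient $\frac{c_i\rho_i}{a_i}-\frac{c_{i-1}\rho_{i-1}}{a_{i-1}}$ is only guaranteed positive when $\rho_i\approx\rho_{i-1}$, i.e., when the gap collapses \emph{in the $b$-coordinate}. In your $u$-coordinates a gap $g_i=u_i-u_{i-1}$ can collapse with $b_i-b_{i-1}$ bounded away from zero (both nodes drifting into a Gaussian tail); in the left tail $\rho_i\gg\rho_{i-1}$ and the coefficient is $\approx c_i\rho_i/a_i<0$, so the leading term of $\dot g_i$ is attractive there, and the barrier argument as sketched fails precisely in the coupled configurations you identify. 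To be fair, the paper's own proof does not close this either: it analyzes only the isolated-collapse scenario with all $a_i=1$ and concludes qualitatively from the leading-order blow-up, so your plan, if completed with a tail estimate decoupling ``collision in $b$'' from ``escape to $\pm\infty$'' (e.g., a separate bound on $\max_i|b_i|$ over finite time intervals), would establish strictly more than the paper actually proves.
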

\begin{proof}

We consider a special scenario when two adjacent nodes $b_i, b_{i+1}$ become 
close to each other while maintaining a relatively large gap with all other nodes, i.e.
\begin{equation}
	o(1) = b_{i+1} - b_i = o(b_p - b_q), \quad \forall q \in [N]\backslash\{i,i+1\}, \quad p = i, i+1.
\end{equation}
WLOG, we assume $b_{i+1} = b_i + \Delta b > b_i$ and reduce the following term which appears both in 
their time derivative expression in \cref{equ:heat-scheme}
\begin{equation}\label{equ:common-term}
	\begin{aligned}
		& \ \frac{1}{\mathfrak{F}_0\lp b_{i} \right) - \mathfrak{F}_0\lp b_{i - 1} \right)}\lp \frac{\log\frac{\sum_{j = 1}^{i-1} a_j}{\sum_{j = 1}^{i} a_j} p_{\mathrm{r}}\lp b_i \right)}{a_i^2} - \frac{\log\frac{\sum_{j = 1}^{i - 2} a_j}{\sum_{j = 1}^{i - 1} a_j} p_{\mathrm{r}}\lp b_{i - 1} \right)}{a_ia_{i - 1}} \right)		\\
		= & \ \lp \frac{1}{p_{\mathrm{r}}(b_i)\Delta b} + O(1)\right)\lp \log\frac{i-1}{i}p_{\mathrm{r}}(b_{i}) - \log\frac{i-2}{i-1}p_{\mathrm{r}}(b_{i-1}) \right) 	\\
		= & \ \lp \frac{1}{p_{\mathrm{r}}(b_i)\Delta b} + O(1)\right)\lp \log\frac{i-1}{i}p_{\mathrm{r}}(b_{i}) - \log\frac{i-2}{i-1}\lp p_{\mathrm{r}}(b_i) + O(\Delta b)\right) \right) 	\\
		= & \ \lp \frac{1}{p_{\mathrm{r}}(b_i)\Delta b} + O(1)\right)\lp \log\frac{i^2-2i+1}{i^2-2i}p_{\mathrm{r}}(b_i) + O(\Delta b)\right) 	\\
		= & \ \frac{1}{\Delta b} \log\frac{i^2-2i+1}{i^2-2i} + O(1) \rightarrow +\infty, \quad \Delta b \rightarrow 0^+, 	\\
	\end{aligned}
\end{equation}
where we use the simplified model where all the weights $a_i$ are set to $1$ and
Taylor expansion to conclude that $\mathfrak{F}_0\lp b_{i} \right) - \mathfrak{F}_0\lp b_{i - 1} \right) = p_{\mathrm{r}}(b_i)\Delta b + O(\Delta b^2)$ and $p(b_{i-1})$
follows the same spirit. This term appears with positive sign in the RHS of 
$\dot{b}_i$ and negative sign in the RHS of $\dot{b}_{i-1}$, indicating that the left (right) node $b_{i-1}$($b_i$)
will move fast towards left (right) respectively. This repulsion behavior guarantees that 
the Lagrangian coordinates will never collide with each other and the mesh degeneracy will not appear.

Next, we analyze our scheme using the time derivative of the Lagrangian coordinate.
It is a well-known result that under the heat flow the mean of the distribution is fixed. 
Therefore, due to the diffusive nature of the heat equation, one can imagine that the position of the quantile greater than the mean should move right in the heat equation and vice versa. Suppose $x \in [b_i, b_{i+1}]$ is a quantile with $b_i$ greater than the mean $0$. As the base measure
is a standard Gaussian distribution whose probability density function decreases over $[0, \infty)$,
we conclude that 
\begin{equation}
	0 < b_i < b_{i+1} \Longrightarrow p_{\mathrm{r}}(b_i) > p_{\mathrm{r}}(b_{i+1}) \Longrightarrow  \log\frac{\sum_{j = 1}^{i-1} a_j}{\sum_{j = 1}^{i} a_j} p_{\mathrm{r}}\lp b_i \right) < \log\frac{\sum_{j = 1}^{i} a_j}{\sum_{j = 1}^{i+1} a_j} p_{\mathrm{r}}\lp b_{i+1} \right) < 0.
\end{equation}
Consequently, the Lagrangian coordinate $f_{b}(z)$ is indeed moving towards right, which matchs the intuition from the heat equation.

\end{proof}

The neural projected dynamics can be understood as a Lagrangian scheme \cite{carrillo2021lagrangian, LIU2020109566, doi:10.1137/050633019} with neural network basis. Specifically, fixing basis as ReLU components in \cref{equ:ReLU-network}, one can view $a_i$'s and $b_i$'s as the shape and location coefficients of the basis functions respectively. Updating $a_i$'s is similar to classical finite-element method with fixed basis functions, while adding the degree of freedom of $b_i$'s is similar to the moving mesh method. The Lagrangian schemes can handle the problem of the free boundary such as porous medium, e.g. in \cite{LIU2020109566}, they use finite element method to solve the mapping function of the porous medium equation with high accuracy. While most Lagrangian schemes are based on updating the $a_i$'s parameters, our methods have more flexibility and expressivity as it takes more degree of freedom into account. The primal-dual structure of the Wasserstein gradient flow also leverages a lot of usage of Lagrangian schemes \cite{carrillo2021lagrangian}.

On the other hand, our numerical algorithm and the moving mesh method. The principal ingredients of the moving mesh method include the equidistribution principle, the moving mesh equation, and the method of lines approach \cite{tang2005moving}. The moving mesh equation is solved during the simulation to ensure the adaptivity such that the mesh can resolve to the detailed structure. In many classical moving mesh methods, the mesh equations are solved separately from the governing PDE itself to guarantee the adaptivity of the numerical methods. This implies that how the mesh change will not depend explicitly on the underlying PDE. There also exist moving mesh methods such that the mesh updates take into account of the governing PDE (e.g., the arbitrary Lagrangian-Eularian methods \cite{arbitrary}). From this perspective, the projected dynamics provide a PDE-specific moving mesh equation, i.e. the mesh moved according to the PDE dynamics to simulate which is more adaptive and efficient. Moreover, through a detailed study of the simple case, we can establish a theoretical guarantee on the quality of our moving mesh method in \cref{prop:mesh-quality-analysis}.

\subsection{Truncated error analysis for general neural projected Wasserstein gradient flow}\label{sec:general-truncated}
The proof of the consistency of the numerical scheme relies on the analytic formula derived before which is restrictive. In this section, we provide another methodology to prove the consistency of the
numerical scheme we derived in this paper. Instead of calculating the evolution of the
mapping explicitly, we calculate the deviation of the projected gradient w.r.t.
the original gradient direction. Let us first state a geometric proposition where we attempt
to be as general as possible. This result is also proved in \cite{doi:10.1137/20M1344986} and we prove it here for completeness.

Let $\mcX$ be a manifold (possibly infinite-dimensional) with a Riemannian metric $g_{\mcX}$, 
which provides an inner product on the tangent space $T_x\mcX$ (possibly infinite-dimensional Hilbert space) 
for each $x \in \mcX$. Let $\mcY \subset \mcX$ be its submanifold with induced metric denoted by $g_{\mcY}$, i.e.
$\forall y \in \mcY$:
\begin{equation*}
	g_{\mcY}(y): T_y\mcY \times T_y\mcY \rightarrow \mbR, \quad g_{\mcY}(y)(v, w) = g_{\mcX}(y)(v, w), \quad \forall v, w \in T_y\mcY.
\end{equation*}
Furthermore, let $H: \mcX \rightarrow \mbR$ be a functional defined over $\mcX$ and we use $\wtd H: \mcY \rightarrow \mbR$ for its restriction
on $\mcY$. We have the following proposition.
\begin{proposition}\label{prop:proj-gradient}
	Let $\nabla_{g_{\mcX}}H(y) \in T_y\mcX$ $(\nabla_{g_{\mcY}}\wtd H(y) \in T_y\mcY)$ denote the gradient of the functional 
	$H$ w.r.t. the metric $g_{\mcX}$ $(g_{\mcY})$ at $y \in \mcX$ $(y \in \mcY)$. Then, we have 
	\begin{equation}\label{equ:proj}
		\nabla_{g_{\mcY}}\wtd H(y) = \Pi(y)\nabla_{g_{\mcX}}H(y),
	\end{equation}
	where $\Pi(y)$ is the orthogonal projection operator from $T_y\mcX$ to $T_y\mcY$.
\end{proposition}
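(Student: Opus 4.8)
The plan is to check directly that $\Pi(y)\nabla_{g_{\mcX}}H(y)$ satisfies the defining property of the Riemannian gradient of $\wtd H$ on $\mcY$ at $y$, and then invoke uniqueness of the gradient. Fix $y \in \mcY$ and an arbitrary $v \in T_y\mcY$, which we also regard as an element of $T_y\mcX$ via the inclusion $T_y\mcY \hookrightarrow T_y\mcX$. Since $\wtd H = H|_{\mcY}$ and any curve in $\mcY$ through $y$ with velocity $v$ is in particular a curve in $\mcX$, the differentials coincide on $T_y\mcY$: $d\wtd H(y)(v) = dH(y)(v)$. Rewriting both sides with the definition of the Riemannian gradient yields $g_{\mcY}(y)\!\left(\nabla_{g_{\mcY}}\wtd H(y),\, v\right) = g_{\mcX}(y)\!\left(\nabla_{g_{\mcX}}H(y),\, v\right)$.

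Next I would use the orthogonal splitting $T_y\mcX = T_y\mcY \oplus (T_y\mcY)^{\perp}$ afforded by $\Pi(y)$ and write $\nabla_{g_{\mcX}}H(y) = \Pi(y)\nabla_{g_{\mcX}}H(y) + w$ with $w \in (T_y\mcY)^{\perp}$. Because $v \in T_y\mcY$, the cross term drops, $g_{\mcX}(y)(w,v) = 0$; and since both $\Pi(y)\nabla_{g_{\mcX}}H(y)$ and $v$ lie in $T_y\mcY$, on which $g_{\mcX}$ restricts to $g_{\mcY}$ by hypothesis, we get $g_{\mcX}(y)\!\left(\Pi(y)\nabla_{g_{\mcX}}H(y),\, v\right) = g_{\mcY}(y)\!\left(\Pi(y)\nabla_{g_{\mcX}}H(y),\, v\right)$. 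Chaining the identities gives $g_{\mcY}(y)\!\left(\nabla_{g_{\mcY}}\wtd H(y) - \Pi(y)\nabla_{g_{\mcX}}H(y),\, v\right) = 0$ for every $v \in T_y\mcY$; since $g_{\mcY}(y)$ is a nondegenerate (indeed positive-definite) bilinear form on $T_y\mcY$ and the bracketed vector lies in $T_y\mcY$, it must vanish, which is exactly \eqref{equ:proj}.

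There is no substantive geometric obstacle here; the only points needing care are the Hilbert-space technicalities, which are already implicit in the hypotheses. Existence of $\nabla_{g_{\mcX}}H(y)$ and $\nabla_{g_{\mcY}}\wtd H(y)$ presupposes that $dH(y)$ and $d\wtd H(y)$ are bounded linear functionals on the respective tangent spaces (so Riesz representation applies), and the phrase ``orthogonal projection operator $\Pi(y)$'' presupposes that $T_y\mcY$ is a \emph{closed} subspace of the Hilbert space $T_y\mcX$, so that the decomposition $T_y\mcX = T_y\mcY \oplus (T_y\mcY)^{\perp}$ holds. In the setting actually used in this paper, where $\mcX$ is the $L^2(p_{\mathrm r})$ mapping space and $\mcY$ is the finite-dimensional neural mapping manifold $(\Theta, G_{\mathrm W})$, both are automatic. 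I would therefore record these as standing assumptions and then present the short computation above.
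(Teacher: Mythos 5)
Your proposal is correct and follows essentially the same route as the paper's proof: both rely on the orthogonal splitting $T_y\mathcal{X}=T_y\mathcal{Y}\oplus(T_y\mathcal{Y})^{\perp}$, the fact that $d\widetilde{H}(y)$ is the restriction of $dH(y)$ to $T_y\mathcal{Y}$, and the fact that $g_{\mathcal{Y}}$ is the restriction of $g_{\mathcal{X}}$. The only difference is presentational — you verify the identity by pairing against an arbitrary $v\in T_y\mathcal{Y}$ and invoking nondegeneracy, which in fact spells out the final step the paper asserts more tersely via its inclusion/restriction maps and commutative diagram.
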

\begin{proof}
	As $\mcY$ is a submanfold of $\mcX$, we have inclusion map $\mathrm{I}(y): T_y\mcY \rightarrow T_y\mcX$ and
	restriction map $\mathrm{I}^*(y): T_y^*\mcX \rightarrow T_y^*\mcY$ for each $y \in \mcY$. Both mappings are 
	linear and are adjoint to each other. Therefore, viewing the metric tensor $g_{\mcY}(y)$ as a linear 
	mapping between $T_y\mcY \rightarrow T_y^* \mcY$, we have
	\begin{equation*}
		g_{\mcY}(y) = \mathrm{I}^*(y) \circ g_{\mcX}(y) \circ \mathrm{I}(y), \quad \forall y \in \mcY.
	\end{equation*}
	Moreover, the inner product $g_{\mcX}(y)$ on the Hilbert space $T_y\mcX$ induces an orthogonal decomposition:
	\begin{equation*}
		T_y\mcX = T_y\mcY \oplus T_y\mcY^{\perp}, \quad \forall y \in \mcY,
	\end{equation*}
	along with an orthogonal projection operator $\Pi(y)$. Now, recall that the Riemannian gradient $\nabla_{g_{\mcX}}H(y)$ is defined as 
	\begin{equation*}
		g_{\mcX}(y)\nabla_{g_{\mcX}}H(y) = dH(y).
	\end{equation*}
	The differential of $H(\cdot)$ and $\wtd H(\cdot)$ is related by
	\begin{equation*}
		d\wtd H(y) = \mathrm{I}^*(y)d H(y), \quad \forall y \in \mcY.
	\end{equation*}
	Therefore, gathering all the ingredients, we have the following commutative diagram
	\begin{equation*}
		\begin{tikzcd}
			& T_y\mcY \arrow[r, "\mathrm{I}(y)", yshift=0.7ex] \arrow[d, "g_{\mcY}(y)"'] & T_y\mcX \arrow[d, "g_{\mcX}(y)"] \arrow[l, "p_{\mathrm{r}}(y)", yshift=-0.7ex] &\rotatebox[origin=c]{180}{$\in$} \nabla_{g_{\mcX}}H(y)\\
			d\wtd H(y) \in & T_y^*\mcY  & T_y^*\mcX \arrow[l, "\mathrm{I}^*(y)"'] &\rotatebox[origin=c]{180}{$\in$} dH(y)
		\end{tikzcd}
	\end{equation*} 
	As $\Pi(y)$ is the orthogonal projection, we conclude that 
	\begin{equation*}
		\nabla_{g_{\mcY}}\wtd H(y) = (\mathrm{I}^*(y) g_{\mcX}(y) \mathrm{I}(y))^{-1}\mathrm{I}^*(y)d H(y) = \Pi(y)\nabla_{g_{\mcX}}H(y).
	\end{equation*}
\end{proof}
We can prove the consistency of our numerical schemes over different PDEs with the Wasserstein gradient flow 
structures by leveraging this proposition in the case $\mcX = P_2^{\infty}(\mbR)$ is the density manifold and 
$g_{\mcX}$ is chosen to be the $W_2$ metric. To achieve this, we can rewrite \cref{equ:proj} as
\begin{equation}
	\nabla_{g_{\mcY}}\wtd H(y) = \argmin_{v \in T_y\mcY}\norml \nabla_{g_{\mcX}}H(y) - v \normr_{g_{\mcX}}.
\end{equation}
Therefore, $\forall v \in T_y\mcY$ will provide an upper bound for the truncated error of our approximation scheme.
Moreover, if we assume that the submanifold $\mcY \subset P_2^{\infty}(\mbR)$ is identical to a generative model via mapping function $f_{\theta\#}$, 
i.e. $\mcY = f_{\theta\#}p_{\mathrm{r}}$ with $\theta \in \Theta$ and $p_{\mathrm{r}}$ the base measure. Then, the projected gradient direction can also be characterized
using the metric over the mapping space, i.e.
\begin{equation}\label{equ:projected-gradient}
	\nabla_{\Theta}\wtd H(\theta) = \argmin_{v \in T_{\theta}\Theta}\int (\nabla_{\theta} H(\theta)(x) - v(x))^2 f_{\theta\#}p_{\mathrm{r}}(x)dx,
\end{equation}
where $\theta$ is mapped to point $y$ and we abuse the notion of $\nabla_{\Theta} H(\theta)$ to denote the gradient vector in the mapping coordinate. 
Moreover, we can perform truncated error analysis directly over the mapping space, which is more convenient
and clear. Let us focus on the ReLU network mapping \cref{equ:ReLU-network}. The tangent space in the mapping coordinate is spanned 
by the vectors in \cref{equ:ReLU-network-derivative}. Meanwhile, the tangent space in the density coordinate is spanned by
\begin{equation}
	\partial_{b_i}f_{\theta\#}p_{\mathrm{r}}(x) = \frac{a_i}{N}p_{\mathrm{r}}'(x)\mathbf{1}_{[f(\theta, b_i), \infty)} ,\quad \partial_{a_i}f_{\theta\#}p_{\mathrm{r}}(x) =  \frac{f_{\theta}^{-1}(x)-b_i}{N}p_{\mathrm{r}}'(x)\mathbf{1}_{[f(\theta, b_i), \infty)} ,
\end{equation}
where the notation $f_{\theta}(\cdot ) = f(\theta, \cdot)$. Here we use the fact that the mapping $f_{\theta}$ is linear with slope $\frac{\sum_{j=1}^i a_j}{N}$ over the interval $[b_i, b_{i+1}]$. If $b_i$s are fixed, the projected dynamics belongs to projection-based model reduction \cite{benner2015survey} where the basis is fixed to be neurons. While changing $b_i$s correspond to model reduction with adaptive basis.
\begin{proposition}\label{proposition:order}
	The numerical scheme based on ReLU network mapping is consistent with order $2$ using both $a, b$ parameters and
	of order $1$ with either $a$ or $b$ parameters.
\end{proposition}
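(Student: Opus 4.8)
The plan is to deduce the statement from the projection identity of \cref{prop:proj-gradient}, reducing the truncation error to a one-dimensional best-approximation problem which I would then estimate cell by cell in the Lagrangian coordinate. By \cref{equ:projected-gradient}, at a fixed state $\theta$ the projected velocity is the $L^2(p_{\mathrm{r}})$-orthogonal projection of the exact Wasserstein velocity onto the model tangent space, so the consistency error of the semidiscrete scheme is, up to a constant, the best-approximation error
\[
\mathcal E(\theta):=\min_{v\in T_\theta\Theta}\Big(\int_{\mathbb R}\big(u_\theta(z)-v(z)\big)^2 p_{\mathrm{r}}(z)\,dz\Big)^{1/2},\qquad u_\theta(z):=-\Big(\nabla_x\tfrac{\delta}{\delta p}\mathcal F(f_{\theta\#}p_{\mathrm{r}})\Big)\big(f(\theta,z)\big),
\]
and the whole proof amounts to bounding $\mathcal E(\theta)$ for the three choices of active parameters. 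Writing $x_i:=f(\theta,b_i)$ for the image mesh, I also keep track of which coordinate ($z$ or $x$) the mesh width $h$ is measured in.

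First I would record the shape of the tangent space. From \cref{equ:ReLU-network-derivative}, in the mapping coordinate $T_\theta\Theta$ is spanned by the ramps $z\mapsto\frac1N\sigma(z-b_i)$ (the $a$-directions) and the steps $z\mapsto-\frac{a_i}{N}\mathbf 1_{[b_i,\infty)}(z)$ (the $b$-directions). Hence the $a$-only space is the \emph{continuous} piecewise-linear functions on the partition $\{b_1<\dots<b_N\}$ vanishing on $(-\infty,b_1)$, the $b$-only space is the piecewise-constant functions with breaks only at the $b_i$, and the full space is their sum, i.e.\ all piecewise-linear functions with nodes $\{b_i\}$ that may jump at each node. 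The structural fact I would exploit is that $f(\theta,\cdot)$ is affine on each cell $(b_i,b_{i+1})$ with slope $s_i=\frac1N\sum_{j\le i}a_j$; consequently $u_\theta$ has, on each \emph{open} cell, the same interior smoothness as $\nabla_x\frac{\delta}{\delta p}\mathcal F$ (two bounded derivatives under the standing hypotheses, e.g.\ $\|V''\|_\infty<\infty$ in the potential case), whereas \emph{across} a node $b_i$ the function $u_\theta$ can jump: for internal energies such as the entropy/heat flow it does, because $f_{\theta\#}p_{\mathrm{r}}$ is merely Lipschitz and $\nabla_x\log p$ registers the slope mismatch $s_{i-1}\ne s_i$ at the mesh point.

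With this in place the three estimates are classical. For the full model take $v$ the cellwise linear interpolant of $u_\theta$ (jumps allowed, and constant on $(-\infty,b_1)$ up to the tail $\mathfrak F_0(b_1)$); the per-cell bound $|u_\theta-v|\lesssim(b_{i+1}-b_i)^2\sup|u_\theta''|\lesssim(b_{i+1}-b_i)(x_{i+1}-x_i)$ yields $\mathcal E(\theta)=O(h^2)$, which is the order-$2$ claim and recovers \cref{prop:potential-flow-consistency} when $\mathcal F$ is linear. For the $b$-only model $v$ is forced piecewise constant, so $\|u_\theta-v\|$ is bounded below by the per-cell oscillation of $u_\theta$, which is $\Theta(h)$ once $u_\theta$ is non-constant: order exactly one. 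For the $a$-only model $v$ must be continuous at every node; if $u_\theta$ has a one-sided jump $J_i:=u_\theta(b_i^+)-u_\theta(b_i^-)$ at $b_i$, then on the two adjacent cells any continuous $v$ is off by $\gtrsim|J_i|$ on a set of $p_{\mathrm{r}}$-mass $\gtrsim h$, giving $\mathcal E(\theta)=O(h)$ from above (average the one-sided nodal values and interpolate linearly) and $\gtrsim h^{1/2}\max_i|J_i|$ from below — again order one, with an additional boundary contribution from the enforced vanishing on $(-\infty,b_1)$.

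The routine parts are the per-cell interpolation estimate and the interior regularity of $u_\theta$. The main obstacle I anticipate is the \emph{lower} bounds that pin the order at exactly one for the single-parameter schemes: for $b$-only one must exhibit an admissible state and flow with genuinely non-constant $u_\theta$; for $a$-only one must show the node jumps $J_i$ of the heat/entropy velocity are $\Theta(1)$ — they arise from $s_{i-1}\ne s_i$, and perturbing the $a_i$'s, which is precisely what changes those slopes, does not cancel them because the jump of $u_\theta$ is governed by $\partial_x\log p$ and not by the chosen tangent direction (for a purely linear $\mathcal F$, where $u_\theta$ is continuous, the $a$-only scheme can in fact do better, so the first-order barrier for linear functionals is the one from the $b$-directions in \cref{prop:potential-flow-consistency}, not from the $a$-directions). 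A secondary technical point is to verify that the untracked tail $z<b_1$, where $u_\theta$ is a nonzero constant while every admissible $v$ vanishes, contributes only lower order; this is where I would impose a mild compatibility condition on the mesh such as $\mathfrak F_0(b_1)=O(h^2)$, which for a Gaussian reference holds automatically once $b_1$ is taken a few mesh widths into the tail.
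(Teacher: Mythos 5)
Your argument is correct in outline and is essentially the paper's argument carried out in full: the paper's own proof is a three-sentence sketch that identifies the $b$-span with Heaviside (piecewise-constant) functions and the joint span with piecewise-linear functions and then quotes the classical approximation orders, whereas you supply the reduction to a best-approximation problem via \cref{prop:proj-gradient} and \cref{equ:projected-gradient}, the per-cell interpolation estimates, and the lower bounds that make ``order'' sharp rather than ``at most''. The substantive thing you add is the $a$-only case, which the paper's proof never addresses even though the statement claims it: your observation that the $a$-span consists of \emph{continuous} piecewise-linear functions vanishing on $(-\infty,b_1]$ is exactly right, and so is the caveat that for a linear potential functional (continuous $u_\theta$) this space is second-order accurate away from $b_1$, so the first-order claim for the $a$-only scheme has to come from the nodal jumps of $u_\theta$ that internal energies produce. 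One quantitative correction there: with the $\tfrac1N$ normalization of \cref{equ:ReLU-network}, the slope of $f_\theta$ changes by $a_i/N=O(\Delta b)$ across a node, so the jump $J_i$ of the entropy velocity $-\partial_x\log p\circ f_\theta$ is $\Theta(\Delta b)$, not $\Theta(1)$; the order-one barrier is then obtained by aggregating the $O(N)$ jumps, $\bigl(\sum_i J_i^2\,\Delta b\bigr)^{1/2}=\Theta(\Delta b)$, whereas your stated lower bound $\Delta b^{1/2}\max_i|J_i|$ only yields $\Delta b^{3/2}$ and should be replaced by this sum. With that adjustment all three orders come out as claimed, and your treatment of the tail $z<b_1$ is a genuine point the paper also leaves implicit.
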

\begin{proof}	
	In view of \cref{equ:ReLU-network-derivative}, we have that the approximation using only $\partial_{b_i}f_{\theta}$ is simply 
	piece-wise constant approximation. As each ingredient has the shape of a Heaviside function, it is consistent with order $1$. While the approximation using both $\partial_{b_i}f_{\theta}$ and $\partial_{a_i}f_{\theta}$
	is a piece-wise linear approximation, thereby consistent of order $2$. This is because another set of ReLU-shape functions is added to the basis.
\end{proof}

The connection between the ReLU neural network and the linear finite element space is systematically studied in \cite{sci_2020}. They theoretically establish that at least two hidden layers are needed in a ReLU neural network to represent any linear finite element functions in $\Omega \subset \mbR^d$ when $d \geq 2.$

Based on this concrete understanding of the structure of the tangent space, we can calculate the local truncation error of the projected gradient flow.
\begin{theorem}\label{thm:consistent}
	Given a tangent vector $v(x) \in T_{f_{\theta\#}p_{\mathrm{r}}}\mcP(\mbR)$ whose approximated tangent vector in projected dynamics is given by $\nabla_{\theta}H(\theta)$, the local truncation error in the ReLU network mapping is given by 
	\begin{equation}\label{equ:relu-approximation}
		\sum_{i=1}^{N} \int_{b_i}^{b_{i+1}} v^2(f_{\theta}(z))p_{\mathrm{r}}(z)dx - \frac{\lp\int_{b_i}^{b_{i+1}} v(f_{\theta}(z))(z - m_i)p_{\mathrm{r}}(z)dz\right)^2}{\int_{b_i}^{b_{i+1}}(z - m_i)^2p_{\mathrm{r}}(z)dz} - \frac{\lp\int_{b_i}^{b_{i+1}} v(f_{\theta}(z))p_{\mathrm{r}}(z)dz\right)^2}{\mathfrak{F}_0(b_{i+1}) - \mathfrak{F}_0(b_i)}
	\end{equation}
	where $m_i$ is the center of mass of $p_{\mathrm{r}}(z)$ in $[b_i, b_{i+1}]$ and $b_{N+1}$ is understood as $+\infty$. Under the assumption that
	$v$ has bounded second order derivative and $b_{i+1} - b_i < \Delta b, \forall i.$
	\begin{equation}\label{equ:relu-approximation-taylor}
		\norml v(x) - \nabla_{\theta}H(\theta) \normr_{L^2(f_{\theta\#}p_{\mathrm{r}})}^2 = \frac{1}{4}\lp\frac{\sum_{j=1}^N a_j}{N}\right)^2\norml v''\normr_{\infty} O(\Delta b^4).
	\end{equation}
\end{theorem}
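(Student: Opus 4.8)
The plan is to identify the left-hand side of \eqref{equ:relu-approximation-taylor} with a weighted least-squares residual over the tangent space of the ReLU model, make that residual explicit, and then bound it by a one-interval Taylor estimate. By \cref{prop:proj-gradient} instantiated with $\mathcal{X}$ the density manifold and $g_{\mathcal{X}}$ the $W_2$ metric — equivalently by \eqref{equ:projected-gradient} — the quantity $\|v-\nabla_\theta H(\theta)\|_{L^2(f_{\theta\#}p_{\mathrm{r}})}^2$ equals $\min_{w\in T_\theta\Theta}\int_{\mathbb{R}}(v(f(\theta,z))-w(z))^2 p_{\mathrm{r}}(z)\,dz$, after pulling the pushforward measure back through $x=f(\theta,z)$. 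By \eqref{equ:ReLU-network-derivative}, in mapping coordinates (and assuming, as elsewhere, $a_i>0$ so $G_{\mathrm{W}}$ is invertible) $T_\theta\Theta=\mathrm{span}\{\,\partial_{b_i}f=-\tfrac{a_i}{N}\mathbf{1}_{[b_i,\infty)},\ \partial_{a_i}f=\tfrac1N(z-b_i)\mathbf{1}_{[b_i,\infty)}\,\}_{i=1}^N$.

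The key step is a structural lemma: ordering $b_1<\dots<b_N$ and writing $I_i:=[b_i,b_{i+1}]$ with $b_{N+1}=+\infty$, the restriction map is a linear isomorphism of $T_\theta\Theta$ onto $\bigoplus_{i=1}^N\{\text{affine functions on }I_i\}$, and every element of $T_\theta\Theta$ vanishes on $(-\infty,b_1]$. This is checked by a left-to-right sweep over the intervals: on $I_i$, $\partial_{a_i}f$ supplies an arbitrary slope increment at $b_i$ and $\partial_{b_i}f$ an arbitrary jump at $b_i$, so the affine piece on $I_i$ may be prescribed independently of the earlier pieces (surjectivity), and the same sweep forces all coefficients to vanish if $w\equiv0$ (injectivity); the dimension count $2N=\sum_i 2$ is consistent. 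Because the $I_i$ are disjoint, the $L^2(p_{\mathrm{r}})$-minimisation decouples, so the error is $\sum_{i=1}^N R_i$ with $R_i=\min_{\ell\ \text{affine}}\int_{I_i}(v(f(\theta,z))-\ell(z))^2 p_{\mathrm{r}}(z)\,dz$ (up to the boundary term $v(0)^2\mathfrak{F}_0(b_1)$ from $(-\infty,b_1]$, which \eqref{equ:relu-approximation} suppresses and which is negligible when $\mathfrak{F}_0(b_1)\approx0$). On $I_i$ the pair $\{1,\,z-m_i\}$ is $p_{\mathrm{r}}$-orthogonal, orthogonality being exactly the center-of-mass identity $\int_{I_i}(z-m_i)p_{\mathrm{r}}\,dz=0$, so projecting onto it and using $\int_{I_i}p_{\mathrm{r}}=\mathfrak{F}_0(b_{i+1})-\mathfrak{F}_0(b_i)$ gives the Pythagorean remainder which is precisely the $i$-th summand of \eqref{equ:relu-approximation}; summing proves that formula.

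For the rate, bound $R_i$ by replacing the optimal $\ell$ with the linear interpolant $\ell_i$ of $z\mapsto v(f(\theta,z))$ at the endpoints of $I_i$. On $I_i$ the map $f(\theta,\cdot)$ is affine with slope $s_i=\tfrac1N\sum_{j\le i}a_j$, and $0\le s_i\le\bar s:=\tfrac1N\sum_{j=1}^N a_j$ since $a_j\ge0$; the chain rule gives $\partial_z^2\,v(f(\theta,z))=s_i^2\,v''(f(\theta,z))$, so $|\partial_z^2\,v(f_\theta)|\le\bar s^2\|v''\|_\infty$ on $I_i$. The classical interpolation bound $\|g-\ell_i\|_{\infty,I_i}\le\tfrac18\|g''\|_{\infty,I_i}|I_i|^2$ with $|I_i|<\Delta b$ then yields $\|v(f_\theta)-\ell_i\|_{\infty,I_i}\le\tfrac18\bar s^2\|v''\|_\infty\Delta b^2$, hence $R_i\le\tfrac1{64}\bar s^4\|v''\|_\infty^2\Delta b^4(\mathfrak{F}_0(b_{i+1})-\mathfrak{F}_0(b_i))$; summing and telescoping ($\sum_i(\mathfrak{F}_0(b_{i+1})-\mathfrak{F}_0(b_i))=1-\mathfrak{F}_0(b_1)\le1$) gives the total error $\le\tfrac1{64}\bar s^4\|v''\|_\infty^2\Delta b^4$, which is of the stated form $\tfrac14\bar s^2\|v''\|_\infty\,O(\Delta b^4)$ after absorbing the bounded factors $\bar s^2,\|v''\|_\infty$ into the $O$-term.

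I expect the structural lemma to be the only real obstacle: one must be sure that enlarging the basis from the $b$-directions (which alone give piecewise-constant approximation, the order-$1$ case of \cref{proposition:order}) to include the $a$-directions yields exactly interval-wise-independent affine functions, with no concealed continuity constraint across the $b_i$. Everything afterward is a weighted least-squares identity and a standard Taylor/interpolation estimate. Two minor caveats should be recorded: the ReLU map collapses $(-\infty,b_1]$ to an atom at $x=0$, producing the term $v(0)^2\mathfrak{F}_0(b_1)$ that the stated formula omits (harmless in the numerical regime where $b_1$ is pushed far left); and the displayed constant in \eqref{equ:relu-approximation-taylor} is best read as leading-order bookkeeping, since the dimensionally faithful bound carries $\bar s^4\|v''\|_\infty^2$ rather than $\bar s^2\|v''\|_\infty$.
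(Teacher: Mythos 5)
Your proposal is correct and follows the same overall strategy as the paper: both identify the truncation error with a decoupled, per-interval weighted least-squares problem over affine functions, exploit the $p_{\mathrm{r}}$-orthogonality of the pair $\{1,\,z-m_i\}$ on $[b_i,b_{i+1}]$ to obtain the Pythagorean remainder \eqref{equ:relu-approximation}, and then estimate each summand by a second-order Taylor argument. The differences are in the details, and both are worth recording. For the exact formula, the paper merely asserts that the tangent space in mapping coordinates consists of interval-wise affine functions with no continuity constraint across the nodes (``exactly the same as the linear spline interpolation''), whereas you prove this with a left-to-right sweep over the intervals; your version is the more complete one, and it correctly isolates the only place where invertibility of $G_{\mathrm{W}}$ (i.e., $a_i\neq 0$) is used. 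For the rate, the paper Taylor-expands $v(f_{\theta}(\cdot))$ about $m_i$ and computes explicitly how far the optimal affine approximant deviates from the first-order Taylor polynomial, identifying the per-interval leading coefficient $\tfrac14 s_i^4\, v''(f_{\theta}(m_i))^2$ with $s_i=\tfrac1N\sum_{j\le i}a_j$; you instead bound the minimum by the endpoint interpolant and invoke the classical $\tfrac18\|g''\|_{\infty}|I|^2$ interpolation estimate. Your route is shorter and yields an honest upper bound with explicit constants; the paper's yields the leading-order asymptotics of the error rather than just a bound. Finally, both of your caveats are legitimate and are not addressed in the paper: the atom that $f_{\theta}$ places at $0$ contributes $v(0)^2\,\mathfrak{F}_0(b_1)$, which \eqref{equ:relu-approximation} silently drops, and the prefactor in \eqref{equ:relu-approximation-taylor} should dimensionally read $\bigl(\sum_j a_j/N\bigr)^4\|v''\|_{\infty}^2$ rather than $\bigl(\sum_j a_j/N\bigr)^2\|v''\|_{\infty}$, as the paper's own per-interval computation in fact shows.
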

\begin{proof}
	As mentioned in the above theorem, the approximation using ReLU network mapping is equivalent to
	piecewise linear approximation in the mapping coordinate. Moreover, at each node $b_i$, the slope and value of the
	The function does not need to be continuous, which is exactly the same as the linear spline interpolation. The main difference 
	is that the grid points $b_i$ are not fixed since they can evolve over time. Therefore, we rewrite the optimization 
	problem \cref{equ:projected-gradient} as 
	\begin{equation}
		\arg\min_{c_i, d_i}\quad\sum_{i=1}^{N}\int_{f_{\theta}(b_i)}^{f_{\theta}(b_{i+1})} \lp v(x) - c_ix - d_i \right)^2 f_{\theta\#}p_{\mathrm{r}}(x)dx,
	\end{equation}
	which can be further reduced to $N-1$ separated optimization problem of $c_i, d_i$ over small interval $[f_{\theta}(b_i), f_{\theta}(b_{i+1})]$.
	For each subproblem, we have
	\begin{equation*}
		\begin{aligned}
			\int_{f_{\theta}(b_i)}^{f_{\theta}(b_{i+1})} \lp v(x) - c_ix - d_i \right)^2 f_{\theta\#}p_{\mathrm{r}}(x)dx
			= \int_{b_i}^{b_{i+1}} \lp v(f_{\theta}(z)) - c_if_{\theta}(z) - d_i \right)^2 p_{\mathrm{r}}(z)dz.
		\end{aligned}
	\end{equation*}
	This is a quadratic optimization problem of $c_i, d_i$ with positive definite Hessian matrix. Taking derivative w.r.t. $c_i, d_i$, we obtain
	\begin{equation*}
		\begin{aligned}
			\int_{b_i}^{b_{i+1}} \lp v(f_{\theta}(z)) - c_if_{\theta}(z) - d_i \right) p_{\mathrm{r}}(z)dz	& = 0,		\\
			\int_{b_i}^{b_{i+1}} f_{\theta}(z)\lp v(f_{\theta}(z)) - c_if_{\theta}(z) - d_i \right) p_{\mathrm{r}}(z)dz	& = 0.
		\end{aligned}
	\end{equation*}
	Now, using the fact that $f_{\theta}(z)$ is a linear function over the interval $[b_i, b_{i+1}]$, we have
	\begin{equation}\label{equ:linear-spline}
		\begin{aligned}
			c_if_{\theta}(z) + d_i = \frac{\int_{b_i}^{b_{i+1}} v(f_{\theta}(z))(z - m_i)p_{\mathrm{r}}(z)dx}{\int_{b_i}^{b_{i+1}}(z - m_i)^2p_{\mathrm{r}}(z)dz}(z - m_i) + \frac{\int_{b_i}^{b_{i+1}} v(f_{\theta}(z))p_{\mathrm{r}}(z)dx}{\mathfrak{F}_0(b_{i+1}) - \mathfrak{F}_0(b_i)}.
		\end{aligned}
	\end{equation}
	Plugging back, we obtain the approximation error as 
	\begin{equation}
		\begin{aligned}
			& \int_{b_i}^{b_{i+1}} \lp v(f_{\theta}(z)) - c_if_{\theta}(z) - d_i \right)^2 p_{\mathrm{r}}(z)dz		\\
			= & \ \int_{b_i}^{b_{i+1}} v(f_{\theta}(z))\lp v(f_{\theta}(z)) - c_if_{\theta}(z) - d_i \right) p_{\mathrm{r}}(z)dz		\\
			= & \ \int_{b_i}^{b_{i+1}} v^2(f_{\theta}(z))p_{\mathrm{r}}(z)dz - \frac{\lp\int_{b_i}^{b_{i+1}} v(f_{\theta}(z))(z - m_i)p_{\mathrm{r}}(z)dz\right)^2}{\int_{b_i}^{b_{i+1}}(z - m_i)^2p_{\mathrm{r}}(z)dz} - \frac{\lp\int_{b_i}^{b_{i+1}} v(f_{\theta}(z))p_{\mathrm{r}}(z)dz\right)^2}{\mathfrak{F}_0(b_{i+1}) - \mathfrak{F}_0(b_i)}.
		\end{aligned}
	\end{equation}
	Next, we assume all the intervals $[b_i, b_{i+1}]$ are short (of scale $O(\Delta)$) and consider expanding the $v$ as Taylor series around $m_i$, i.e.
\begin{equation}
	\begin{aligned}
	v(f_{\theta}(z)) = & \ v(f_{\theta}(m_i)) + \frac{\sum_{j=1}^i a_j}{N}v'(f_{\theta}(m_i))(z - m_i) \\
	& \ + \frac{1}{2}\lp\frac{\sum_{j=1}^i a_j}{N}\right)^2v''(f_{\theta}(m_i))(z - m_i)^2 + O(\Delta^3).
	\end{aligned}
\end{equation}
Here, we use the fact that $f_{\theta}(z)$ is a linear function with slope $\frac{\sum_{j=1}^i a_j}{N}$ over the interval $[b_i, b_{i+1}]$. 
Plugging into \cref{equ:linear-spline}, we obtain
\begin{equation}
	\begin{aligned}
	c_if_{\theta}(z) - d_i = & \ \frac{\sum_{j=1}^i a_j}{N}v'(f_{\theta}(m_i))(z - m_i) + v(f_{\theta}(m_i)) \\
	& \ + \frac{1}{2}\lp\frac{\sum_{j=1}^i a_j}{N}\right)^2v''(f_{\theta}(m_i))\frac{\int_{b_i}^{b_{i+1}} (z-m_i)^2p_{\mathrm{r}}(z)dz}{\mathfrak{F}_0(b_{i+1}) - \mathfrak{F}_0(b_i)} \\
	& \ + \frac{1}{2}\lp\frac{\sum_{j=1}^i a_j}{N}\right)^2v''(f_{\theta}(m_i))\frac{\int_{b_i}^{b_{i+1}} (z - m_i)^3p_{\mathrm{r}}(z)dz}{\int_{b_i}^{b_{i+1}}(z - m_i)^2p_{\mathrm{r}}(z)dz}(z - m_i) + O(\Delta^3).
	\end{aligned}
\end{equation}
Notice that the first two terms are exactly the zero-th and first order term of the $v(f_{\theta}(z))$ function which is similar to classical linear function approximation by discarding all the higher order term. The appearance of residue terms is due to approximation in $L^2(p)$ sense. To calculate the 
$L^2$-approximation error, we have
\begin{equation}
	\begin{aligned}
		& \ \lb \frac{1}{2}\lp\frac{\sum_{j=1}^i a_j}{N}\right)^2v''(f_{\theta}(m_i))\rb^2 \\
		& \ \int_{b_i}^{b_{i+1}} \lp (z-m_i)^2 - \frac{\int_{b_i}^{b_{i+1}} (z-m_i)^2p_{\mathrm{r}}(z)dz}{\mathfrak{F}_0(b_{i+1}) - \mathfrak{F}_0(b_i)} - \frac{\int_{b_i}^{b_{i+1}} (z - m_i)^3p_{\mathrm{r}}(z)dz}{\int_{b_i}^{b_{i+1}}(z - m_i)^2p_{\mathrm{r}}(z)dz}(z - m_i) + O(\Delta^3)\right)^2 p_{\mathrm{r}}(z)dz		
		\\ = & \  \lb \frac{1}{2}\lp\frac{\sum_{j=1}^i a_j}{N}\right)^2v''(f_{\theta}(m_i))\rb^2 O((b_{i+1}-b_i)^5)p_{\mathrm{r}}(b_i) + O((b_{i+1}-b_i)^6)p_{\mathrm{r}}(b_i).
	\end{aligned}
\end{equation}
In summary, the $L^2$ approximation error consists of the sum over all the interval $[b_i, b_{i+1}]$, with each term depends on $a_i$  through
the factor $\frac{\sum_{j=1}^i a_j}{N}$, on $b_i$ through $(b_{i+1}-b_i)^5$ and the term $v''(f_{\theta}(m_i))$, which also contains $a_i, b_i$.
\end{proof}

Let us calculate a special case of the Fokker-Planck equation
\begin{equation*}
	\partial_t p(t,x) - \nabla \cdot (p(t,x) \nabla V(x)) - \gamma\Delta p(t,x) = 0.
\end{equation*}
Under the Wasserstein metric, the tangent vector in the mapping space is given by
\begin{equation*}
	v(x) = -V'(x) - \gamma \frac{ p'(t,x)}{ p(t,x)}.
\end{equation*}
In this case, we have that
\begin{equation*}
	v''(x) = -V^{(3)}(x) - \gamma \frac{p^{(3)}(t,x)p(t,x)^2 + 2p'(t,x)^3 - 3p(t,x)p'(t,x)p''(t,x)}{p(t,x)^3}.
\end{equation*}
The above function will determine the approximation quality of the projected dynamics.

\begin{remark}
    The high-order neural mapping function class and associated high-order projected dynamics can also be derived following a similar procedure. For example, we can add a quadratic term of the ReLU function into the network mapping function as
    \bequ\label{equ:2-layer-ReLU-highorder}
	f\lp \theta, z \right) = \frac{1}{N}\sum_{i = 1}^N a_i\sigma \lp z - b_i \right) + c_i\sigma^2 \lp z - b_i \right).
    \eequ
    Notice that adding high order ReLU term is different from increasing the layers in the ReLU neural network which corresponds to function composition. We leave the detailed analysis and numerical experiments on high-order methods in future work.
\end{remark}

\section{Numerical Examples}\label{sec5}
In this section, we provide several numerical experiments to test our algorithm and theories. We focus our attention on the linear transport equation, Fokker-Planck equation, porous medium equations, and Keller-Segel equation. They all correspond to some specific energy functionals in the probability space equipped with the Wasserstein-2 distance. 

\subsection{Neural Network structure}\label{nn struct} 
We first describe the structure of our neural network for the experiment. We focus on two-layer neural network with ReLU as activation functions.  
\begin{equation}\label{eq:nn}
    f(\theta,z) = \sum_{i = 1}^N a_i\cdot \sigma(z-b_i) + \sum_{i = N+1}^{2N} a_i \cdot\sigma(b_i-z) \,.
\end{equation}
Here $\theta \in \RR^{4N}$ represents the collection of weights $\{a_i\}_{i=1}^{2N}$ and bias $\{b_i\}_{i=1}^{2N}$. To simplify our notation, we have absorbed the $1/N$ factor into $a_i$'s. At initialization, we set $a_i = 1/N$ for $i\in\{1,\ldots,N\}$ and $a_i = -1/N$ for $i\in\{N+1,\ldots,2N\}$. To choose the $b_i$'s, we first set $\mathbf{b} = \textrm{linspace}(-B,B,N)$ for some positive constant $B$ (e.g. $B=4$ or $B=10$). We then set $b_i = \mathbf{b}[i]$ for $i=1,\ldots,N$ and $b_j= \mathbf{b}[j-N]+\varepsilon$ for $j=N+1,\ldots,2N$. Here $\varepsilon=5\times 10^{-6}$ is a small offset which will be explained later in Section \ref{sec:FPK}. Our initialization is chosen such that $f(\theta,\cdot)$ approximates the identity map at initialization. In practice, we find it beneficial to perform a rescaling of the weights $a_i$'s. We replace $a_i$ with $\overline a_i/\beta$ for some fixed constant $\beta >0$. And we initialize $\overline a_i = \beta/N$ for $i\in\{1,\ldots,N\}$ and $\overline a_i = -\beta/N$ for $i\in\{N+1,\ldots,2N\}$. This rescaling makes sure that $f(\theta,\cdot)$ still approximates the identity map at initialization. We provide a brief intuition for rescaling. Let us consider $g(a,b,z) = a\cdot \sigma(b-z)$ for $b=\mathcal{O}(1)$, $z = \mathcal{O}(1)$, $a=\mathcal{O}(1/N)$. Then $\partial_a g = \sigma(z-b) = \mathcal{O}(1)$. On the other hand, $\partial_b g = a \cdot \sigma'(b-z) = \mathcal{O}(1/N)$. This simple calculation shows that the partial gradient of $\eqref{eq:nn}$ with respect to weights and bias are of different scales. Therefore, to make them the same scale, a natural choice is choosing $\beta = \mathcal{O}(N)$. 
\begin{remark}
    The choice of neural network \eqref{eq:nn} is slightly more complicated than the one studied in Section \ref{sec4}. This symmetric structure is used in numerical experiments to overcome ReLU's drawback such that only the positive input is activated. Moreover, \eqref{eq:nn} allows us to construct an approximation to the identity map over $\RR$ easily. However, the results of Proposition \ref{proposition:order} still hold for \eqref{eq:nn}. And Theorem \ref{thm:consistent} can be generalized to neural network of the form given in \eqref{eq:nn} in a straightforward manner. The metric tensor $G_{\mathrm{W}}$ is now a $4N\times 4N$ matrix. The calculations of the individual components of $G_{\mathrm{W}}$ follow the same procedure presented in \cref{prop:metric-2layer}. 
\end{remark}
\begin{remark}
    We remind our readers that our algorithm takes the form of 
    \begin{equation*}
\theta^{k+1}=\theta^k-h G_{\mathrm{W}}(\theta)^\dagger\nabla_\theta \tilde F(\theta^k)\,.
\end{equation*}
During implementation, $\nabla_{\theta}\tilde F(\theta)$ can be obtained by backpropagating $\tilde F(\theta)$ in the case of Example \ref{ex4} and Example \ref{ex5}. However, we need to pay special attention to $\partial_{b_i} F(\theta)$ when dealing with Example \ref{ex6}. This will be elaborated further in Section \ref{sec:FPK} and Section \ref{sec:PM}. 
\end{remark}

\subsection{Linear transport PDE}\label{sec:linear_pde}
We investigate the linear transport PDE given by Eq.~\eqref{eq:transport_pde} with several choices of potential $V(x)$, corresponding to the gradient flow of them under the Wasserstein metric. For a simple potential function, this example can serve as a sanity check of the projected dynamics formulation. The trajectories of the particles for Eq.~\eqref{eq:transport_pde} (i.e. Lagrangian formulation) follows the following ODE 
\begin{equation}\label{eq:transport_pde_lagrangian}
    \dot x(t) = -\nabla V(x) \,.
\end{equation}
Let us denote by $T(t,z_0)$ the solution to Eq.~\eqref{eq:transport_pde_lagrangian} with initial condition $x(0) = z_0$. In other words, $T(t,z_0)$ is the transport map at time $t$ starting from position $z_0$. We define the error at time $t$ by 
\begin{align}
    \mathrm{error} &= \int_{-\infty}^{\infty} |f(\theta_t,z_0) - T(t,z_0)| p_0(z_0) \;dz_0 \nonumber \\
    &\approx \frac{1}{N_1}\sum_{j=1}^{N_1} |f(\theta_t,z_j) - T(t,z_j)| p_0(z_j)\,,\label{eq:error_def}
\end{align}
where we discretize the integration domain by $N_1$ equally spaced points to approximate the integral. And $p_0(z_0)$ denotes the initial distribution of $z_0$. Below we test our projected dynamics under three choices of potential functions and investigate the convergence behavior of two projected dynamics: (i) fixing the bias terms $b_i$ and only updating the weights $a_i$ and (ii) updating both bias $b_i$ and weights $a_i$. Note that when the bias terms $b_i$'s are fixed, we have that $G_{\mathrm{W}} \in \RR^{2N\times 2N}$. Recall that we are essentially simulating the gradient flow on parameter $\theta_t$ given by Eq.~\eqref{eq:gf_linear}. We use $M=5\times 10^5$ particles sampled from a standard Gaussian distribution for approximating $\mathbb{E}_{\tilde z\sim p_{\mathrm{r}}}\Big[V(f(\theta,\tilde z))\Big]$. Once we have the empirical loss function 
$$
 \mathbb{E}_{\tilde z\sim p_{\mathrm{r}}}\Big[ V(f(\theta,\tilde z))\Big] \approx \frac{1}{M}\sum_{i=1}^M V(f(\theta, z_i))\,,
$$
we can backpropagate this loss to obtain 
$$
\mathbb{E}_{\tilde z\sim p_{\mathrm{r}}}\Big[ \nabla_{\theta} V(f(\theta, \tilde z))\Big] \approx \frac{1}{M}\sum_{i=1}^M \nabla_{\theta} V(f(\theta, z_i))\,,
$$
which will be used in the update of $\theta_t$ given by Eq.~\eqref{eq:gf_linear}. 

\subsubsection{Quadratic potential}
As the first example for linear transport PDE, we consider the quadratic potential $V(x)=\frac{1}{2}(x-\mu_0)^2$ as a sanity check. The stationary distribution will be the delta mass supported at $\mu_0$. Using the method of characteristics, one can show that the solution at time $t>0$ is given by 
\begin{equation}
    p(t,x) = p_0\big((x-\mu_0)e^t + \mu_0\big) e^t \,,
\end{equation}
where $p_0(x) = p(0,x)$ is the initial distribution. In Lagrangian coordinates, the transport map of a point $z_0$ at time $t$ is given by 
\begin{equation}\label{eq:analytic_map_x_power_2}
   T(t,z_0) = \mu_0 + e^{-t}(z_0-\mu_0)\,.
\end{equation}
One can check that $T(z_0,0) =z_0$ and $T(t,z_0)\to \mu_0$ as $t\to \infty$. It is worthwhile mentioning that at each $t>0$, the Lagrangian map $x_t(z_0):z_0 \mapsto T(t,z_0)$ is a \emph{linear map}. For simplicity, we take $ \delta_0 = 0$. We choose $dt=10^{-3}$ and run for 1000 steps. We compare our numerical results with Eq.~\eqref{eq:analytic_map_x_power_2}. The result is demonstrated in Fig.~\ref{fig:linear_quadratic}. In Fig.~\ref{fig:x_power_2_mapping}, we have provided a visualization of the analytic solution to the linear transport PDE in Lagrangian coordinates at $t=1$ and our computed solution. As shown in Fig.~\ref{fig:x_power_2_mapping}, the analytic transport map is linear while the neural mapping function is piecewise linear. Increasing $N$ does not necessarily give a smaller approximation error. In fact, we see in Fig.~\ref{fig:linear_quadratic_err} that larger $N$ usually gives a larger error, commonly known as overfitting in machine learning.

\begin{figure}
     \centering
     \begin{subfigure}[b]{0.45\textwidth}
         \centering
         \includegraphics[width=\textwidth]{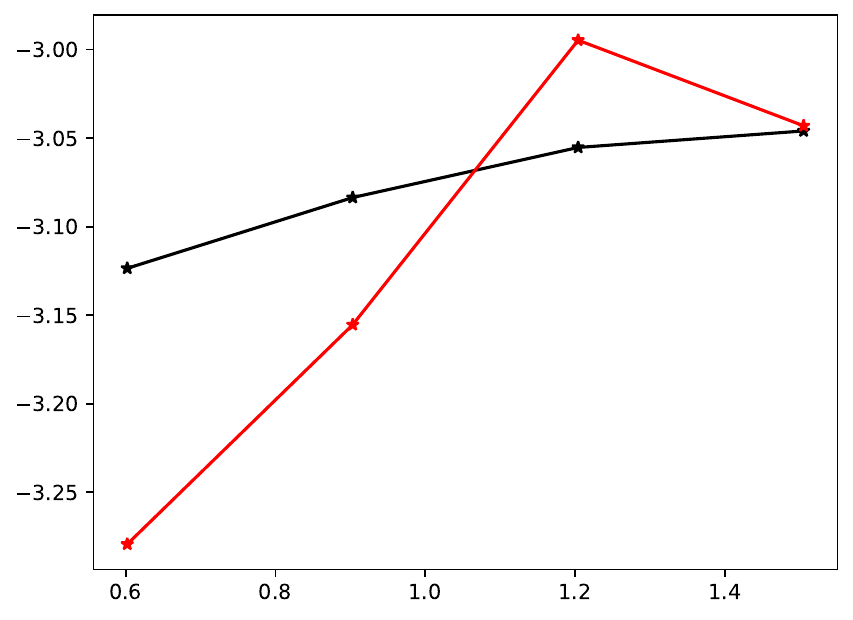}
         \caption{Error}\label{fig:linear_quadratic_err}
     \end{subfigure}
     \begin{subfigure}[b]{0.43\textwidth}
         \centering
         \includegraphics[width=\textwidth]{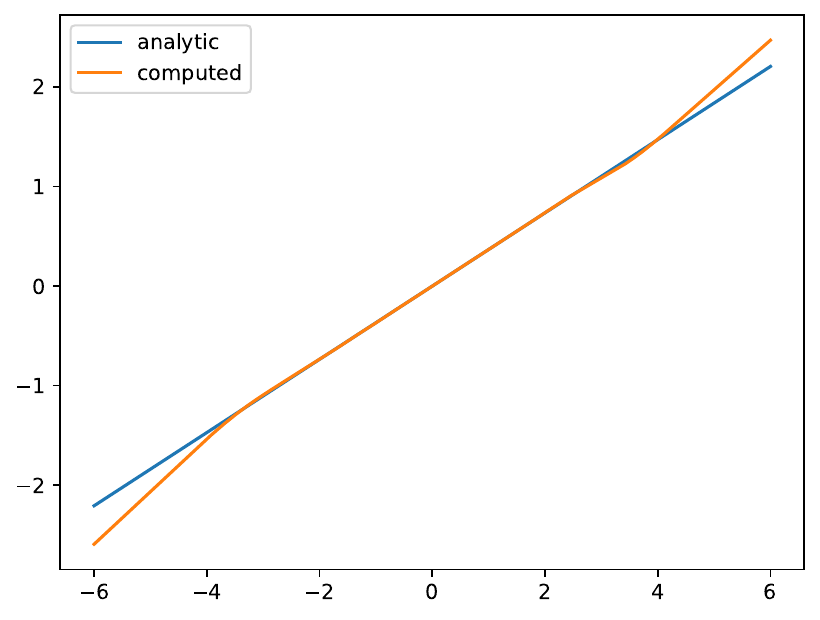}
         \caption{Mapping comparison}\label{fig:x_power_2_mapping}
     \end{subfigure}
        \caption{Left: log-log plot of linear transport PDE with a quadratic potential. The y-axis represents $\log_{10}$ error defined by \eqref{eq:error_def}. x-axis represents $\log_{10}(N)$. The bias terms $b_i$ are initialized based on Section \ref{nn struct} with $B=4$. Red line represents results when only the weights terms are updated. Black line represents results when both weights and bias are updated. Right: Mapping comparison between $T(t,z)$ given by Eq.~\eqref{eq:analytic_map_x_power_2} and our computed solution $f(\theta_t,z)$.}
        \label{fig:linear_quadratic}
\end{figure}

\subsubsection{Quartic potential } 
Let us consider $V(x)=(x-1)^4/4-(x-1)^2/2$. The analytic solution of the transport map is given by
\begin{equation}\label{eq:analytic_map_x_power_4}
    T(t,z_0) = \left\{ \begin{aligned}   
     & \mathrm{sgn}(z_0-1)\frac{e^t}{\sqrt{(z_0-1)^{-2}+e^{2t}-1}}+1, \quad z_0 \neq 1\,, \\
     & 1, \hspace{6.5cm} z_0 = 1\,.
     \end{aligned}\right.
\end{equation}
Basic settings are the same as the previous case. We choose $dt=2\times10^{-4}$ and run for 1000 steps. We compare our numerical results with Eq.~\eqref{eq:analytic_map_x_power_4}.  We present our results in Fig.~\ref{fig:linear_x_4}. In Fig.~\ref{fig:linear_x_4_err}, we observe a clear decrease in error as the number of neurons becomes larger. In Fig.~\ref{fig:x_power_4_mapping}, we visualize the analytic solution to the linear transport PDE in Lagrangian coordinates at $t=0.2$ and our computed solution. We can see that even when the optimal transport map is nonlinear, our computed solution still matches the analytic solution very accurately. 

\begin{figure}
     \centering
     \begin{subfigure}[b]{0.45\textwidth}
         \centering
         \includegraphics[width=\textwidth]{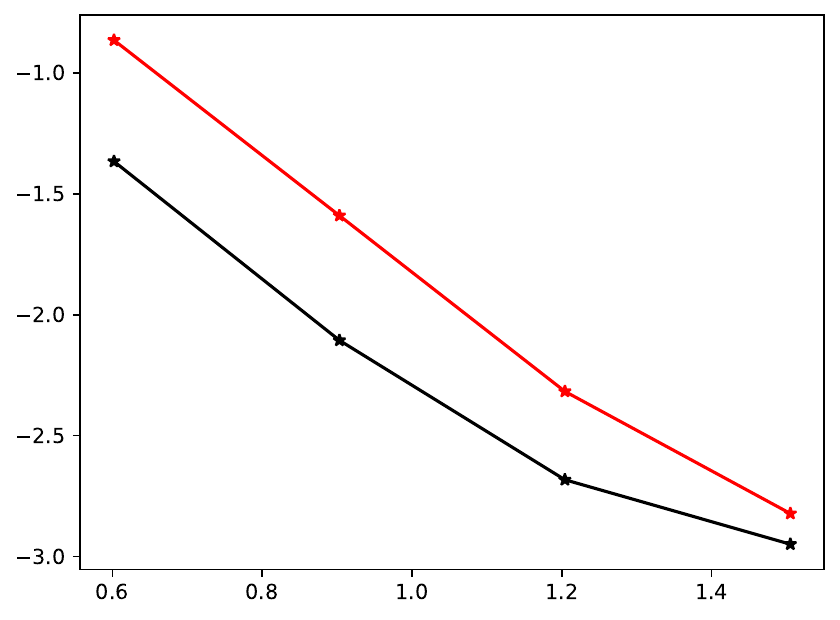}
         \caption{Error}\label{fig:linear_x_4_err}
     \end{subfigure}
     \begin{subfigure}[b]{0.43\textwidth}
         \centering
         \includegraphics[width=\textwidth]{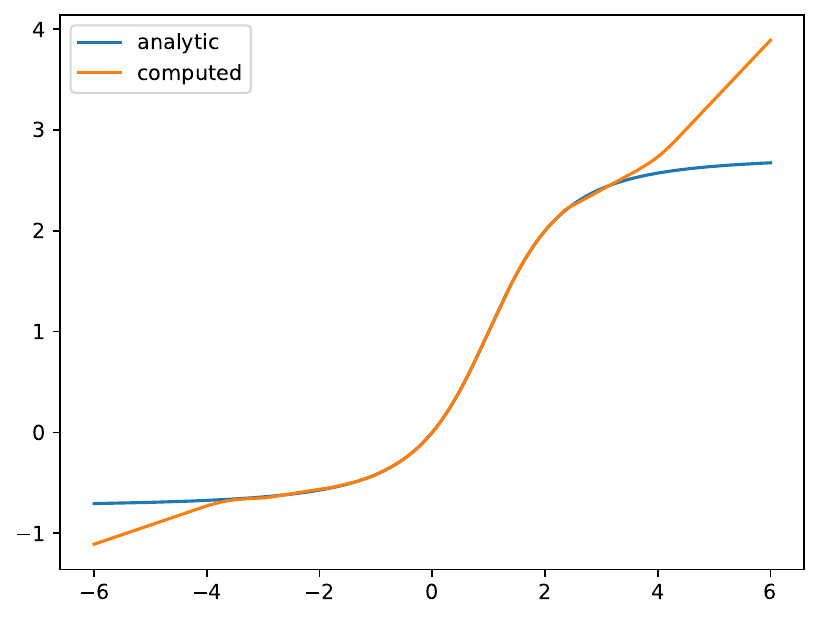}
         \caption{Mapping comparison} \label{fig:x_power_4_mapping}
     \end{subfigure}
        \caption{Left: log-log plot of linear transport PDE with quartic polynomial potential. The y-axis represents $\log_{10}$ error defined by \eqref{eq:error_def}. x-axis represents $\log_{10}(N)$. The bias terms $b_i$ are initialized based on Section \ref{nn struct} with $B=4$. Red line represents results when only the weights terms are updated. Black line represents results when both weights and bias are updated. Right: Mapping comparison between $T(t,z)$ given by Eq.~\eqref{eq:analytic_map_x_power_4} and our computed solution $f(\theta_t,z)$.}
        \label{fig:linear_x_4}
\end{figure}

\subsubsection{Sixth order polynomial potential} 
Let us consider $V(x) = (x-4)^6/6$. The analytic solution of the transport map is given by
\begin{equation}
\label{eq:analytic_map_x_power_6}
    T(t,z_0) = \left\{ \begin{aligned}
        & 4 + \mathrm{sgn}(z_0-4) \frac{1}{\sqrt{2 \sqrt{\frac{1}{4(z_0-4)^4}+t}}}, \quad z_0 \neq 4\,, \\
        & 4 ,\hspace{5.68cm} z_0 =4\,.
    \end{aligned} \right.
\end{equation}
We choose $dt=10^{-6}$ and run for 1000 steps. The reason to choose such a small step size is that the ODE \eqref{eq:transport_pde_lagrangian} is stiff when $V(x)$ is a sixth order polynomial. This can be readily seen by considering the forward Euler scheme for solving \eqref{eq:transport_pde_lagrangian}, which results in the popular gradient descent algorithm. The step size that can guarantee convergence in gradient descent is at most $2/L$ where $L$ is the Lipschitz constant of the gradient function. In our case, the gradient function $\nabla V(x)$ is not globally Lipschitz. Even if we consider a fixed interval $(-l,l)$, the Lipschitz constant is $L = 5(l+4)^4$. If we take $l=10$, then we get $L = \mathcal{O}(10^{-6})$.  We compare our numerical results with Eq.~\eqref{eq:analytic_map_x_power_6}. We have chosen $\{z_j\}_{j=1}^{N_1}$ to be a uniform mesh of size $N_1 = 4\times 10^6$ on $[-6,6]$ in Eq.~\eqref{eq:error_def} and $p_0$ is the standard Gaussian distribution. Note that $N_1$ is chosen to be much larger than the number of neurons $N$ in the network mapping function as it is used to evaluate the accuracy of our algorithm. We present our results in Fig.~\ref{fig:linear_x_6}. We can see a clear decrease in error when $N$ increases from Fig.~\ref{fig:linear_x_6_err}. It is also clear from Fig.~\ref{fig:linear_x_6_err} that updating both weights and bias tends to have a smaller error than just updating the weights, although the difference becomes smaller when $N$ increases and more mesh points become available. Comparing dashed and solid lines in Fig.~\ref{fig:linear_x_6_err}, we find that the initialization of $b_i$ also plays a role in the overall performance of our solution. The error is smaller when the initial mesh points (i.e., the $b_i$'s) are more concentrated near the center of the reference measure. In our case, the reference measure is a standard Gaussian, whose measure is ``almost'' supported on $[-4,4]$. Hence we see that the solid lines show a smaller error than the dashed lines in Fig.~\ref{fig:linear_x_6_err}. In Fig.~\ref{fig:x_power_6_mapping}, we have given a visualization of the analytic solution to the linear transport PDE in Lagrangian coordinates at $t=10^{-3}$ and our computed solution. It is worth noting from Fig.~\ref{fig:x_power_6_mapping} that our learned Lagrangian map approximates the analytic Lagrangian map well near the center of the reference distribution, which is concentrated near the origin. Even though the error of the learned Lagrangian map is larger outside of $[-4,4]$, the overall error from Eq.~\eqref{eq:error_def} is still small since the reference measure (standard Gaussian measure) on $\RR \setminus [-4,4]$ is exponentially small. 
\begin{remark}
    According to Proposition \ref{proposition:order}, updating both $a_i$ and $b_i$ is a second order method. This can be seen from Fig.~\ref{fig:linear_x_6_err} when $N$ is small. When $N$ is large, the numerical advantage of updating both $a_i$ and $b_i$ is less significant compared with updating only $a_i$. This is partially explained by the condition number of the $G_{\mathrm{W}}(\theta)$ grows too large when $\theta$ contains all of $a_i$ and $b_i$. This phenomenon is also observed in our other experiments. Using the implicit scheme or proximal scheme (without solving the linear system that involves $G_{\mathrm{W}}(\theta)$ directly) might help with this difficulty, which we leave as a future study.
\end{remark}

\begin{figure}
     \centering
     \begin{subfigure}[b]{0.45\textwidth}
         \centering
         \includegraphics[width=\textwidth]{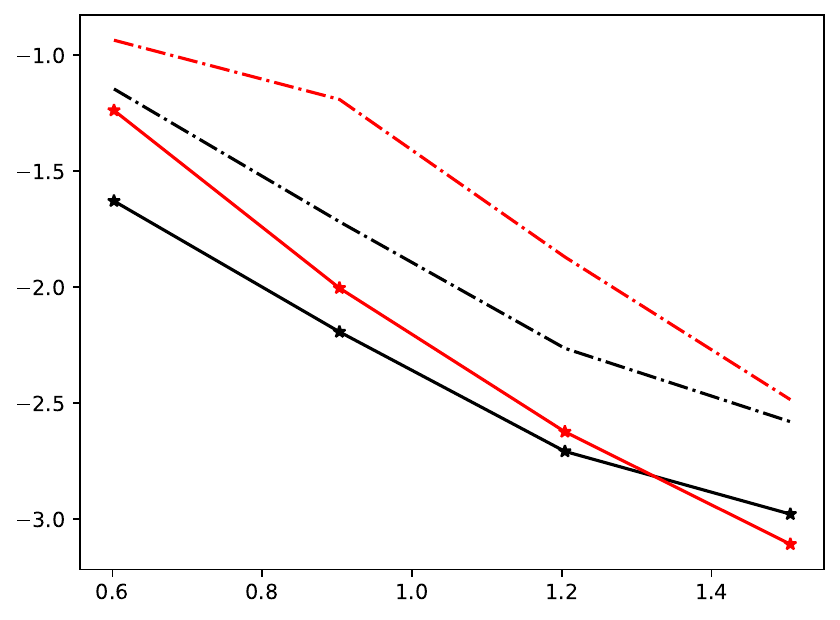}
         \caption{Error}\label{fig:linear_x_6_err}
     \end{subfigure}
     \begin{subfigure}[b]{0.43\textwidth}
         \centering
         \includegraphics[width=\textwidth]{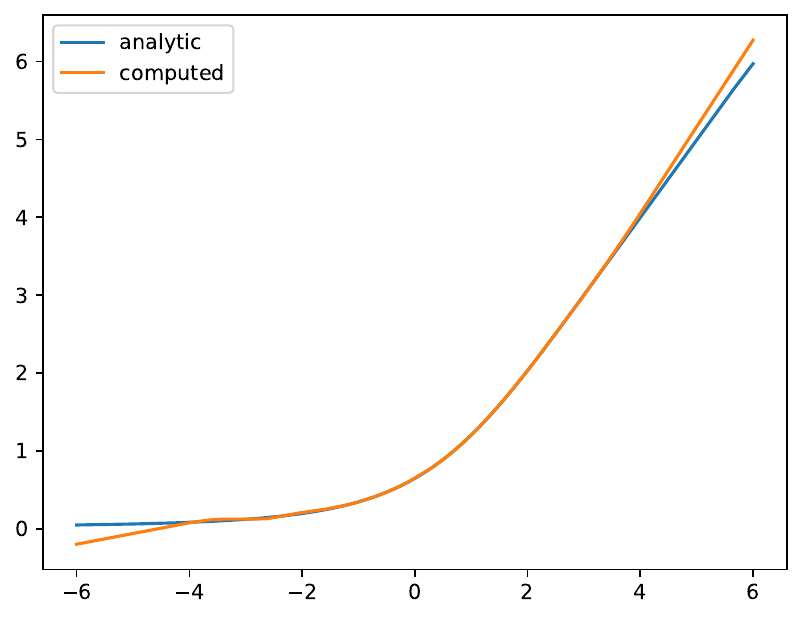}
         \caption{Mapping comparison}\label{fig:x_power_6_mapping}
     \end{subfigure}
        \caption{Left: log-log plot of linear transport PDE with sixth order polynomial potential. The y-axis represents $\log_{10}$ error defined by \eqref{eq:error_def}. x-axis represents $\log_{10}(N)$. The bias terms $b_i$ are initialized based on Section \ref{nn struct} with $B=10$ for dashed line and $B=4$ for solid line. Red lines represent results when only the weights terms are updated. Black lines represent results when both weights and bias are updated. Right: Mapping comparison between $T(t,z)$ given by Eq.~\eqref{eq:analytic_map_x_power_6} and our computed solution $f(\theta_t,z)$. }
        \label{fig:linear_x_6}
\end{figure}

\subsection{Fokker-Planck Equation}\label{sec:FPK}
We consider Fokker-Planck equations. In general, there is no closed-form solution for either the Eulerian or Lagrangian coordinate except for some special forms of potential $V$ (e.g. quadratic). We can still have an approximation of the analytic transport map by realizing that the optimal transport map of a point $z_0$ at time $t$ is given by 
\begin{equation}\label{eq:trasnport_numerical}
    T(t,z_0) = \mathfrak{F}_t\big({\mathfrak{F}_0^{-1}}(z_0)\big)
\end{equation}
where $\mathfrak{F}_t$ is the cumulative distribution function (CDF) of $p(t,x)$. $\mathfrak{F}_0$ has a closed form expression when we choose our reference measure to be a standard Gaussian. But we still need to know $p(t,x)$. Therefore, to investigate the performance of our algorithm, we need to use a numerical solver to solve for $p(t,x)$. We choose a center difference in space, implicit in time discretization as our choice of numerical solver with vanishing boundary condition. Recall that we are essentially simulating the gradient flow on parameter $\theta_t$ given by Eq.~\eqref{eq:gf_linear} and Eq.~\eqref{eq:gf_entropy}. To calculate the derivative of the energy functionals, we used $M=10^6$ particles sampled from a standard Gaussian distribution for approximating $\mathbb{E}_{ z\sim p_{\mathrm{r}}}\Big[ V(f(\theta, z))+\hat U(\frac{p_{\mathrm{r}}(z)}{D_zf(\theta,z)})\Big]$. Approximating $\mathbb{E}_{ z\sim p_{\mathrm{r}}}\Big[ \nabla_{\theta}V(f(\theta, z))\Big]$ is straightforward and has been explained in detail in Section \ref{sec:linear_pde}. On the other hand, some care needs to be taken when approximating $\mathbb{E}_{ z\sim p_{\mathrm{r}}}\Big[ \nabla_{\theta}\hat U(\frac{p_{\mathrm{r}}(z)}{D_zf(\theta,z)})\Big]$ as explained in Section \ref{sec:gradient_neg_entropy}. Suppose that all of the $\{b_k\}_{k=1}^{2N}$ are different. Take $2\leq j \leq N$. Let us also assume that the $b_k$'s are ordered so that $b_1\leq b_2\leq \cdots\leq b_N$. 
\begin{align}
    \mathbb{E}_{ z\sim p_{\mathrm{r}}}\partial_{b_j} \log(D_z f(\theta,z)) &= \mathbb{E}_{ z\sim p_{\mathrm{r}}}\partial_{b_j} \log\left(\sum_{i=1}^N a_i \mathbf{1}_{[b_i,\infty)}(z) - \sum_{i=N+1}^{2N} a_i \mathbf{1}_{(-\infty,b_i]}(z)\right) \nonumber \\
    &= p_{\mathrm{r}}(b_j)\log \left( \frac{\sum_{i=1}^{j-1} a_i  - \sum_{i=N+1}^{2N} a_i \mathbf{1}_{(-\infty,b_i]}(b_j)}{\sum_{i=1}^j a_i  - \sum_{i=N+1}^{2N} a_i \mathbf{1}_{(-\infty,b_i]}(b_j)}\right)\,.
\end{align}
And 
\begin{equation}
    \mathbb{E}_{ z\sim p_{\mathrm{r}}}\partial_{b_1} \log(D_z f(\theta,z)) 
    = p_{\mathrm{r}}(b_1)\log \left( \frac{\sum_{i=N+1}^{2N} -a_i \mathbf{1}_{(-\infty,b_i]}(b_1)}{a_1  - \sum_{i=N+1}^{2N} a_i \mathbf{1}_{(-\infty,b_i]}(b_1)}\right)\,.
\end{equation}
Similarly, if we assume that $b_{N+1}\geq b_{N+2} \geq \cdots \geq b_{2N}$ and let $N+2\leq j\leq 2N$, we have 
\begin{align}
    \mathbb{E}_{ z\sim p_{\mathrm{r}}}\partial_{b_j} \log(D_z f(\theta,z)) &= \mathbb{E}_{ z\sim p_{\mathrm{r}}}\partial_{b_j} \log\left(\sum_{i=1}^N a_i \mathbf{1}_{[b_i,\infty)}(z) - \sum_{i=N+1}^{2N} a_i \mathbf{1}_{(-\infty,b_i]}(z)\right) \nonumber \\
    &= p_{\mathrm{r}}(b_j)\log \left( \frac{\sum_{i=1}^{N} a_i \mathbf{1}_{[b_i,\infty)}(b_j) - \sum_{i=N+1}^{j} a_i }{\sum_{i=1}^{N} a_i \mathbf{1}_{[b_i,\infty)}(b_j)  - \sum_{i=N+1}^{j-1} a_i }\right)\,.
\end{align}
And 
\begin{equation}
    \mathbb{E}_{ z\sim p_{\mathrm{r}}}\partial_{b_{N+1}} \log(D_z f(\theta,z)) 
    = p_{\mathrm{r}}(b_{N+1})\log \left( \frac{\sum_{i=1}^{N} a_i \mathbf{1}_{[b_i,\infty)}(b_{N+1}) -  a_{N+1} }{\sum_{i=1}^{N} a_i \mathbf{1}_{[b_i,\infty)}(b_{N+1})}\right)\,.
\end{equation}
Note that during implementation, we do not have to order the $b_j$'s in order to evaluate the above partial derivatives. Let $0<\delta \leq \frac{1}{2} \min_{i\neq j} |b_i-b_j|$. Then by a straightforward calculation, we have 
\begin{equation}
     \mathbb{E}_{ z\sim p_{\mathrm{r}}}\partial_{b_j} \log(D_z f(\theta,z)) =\begin{cases}
      p_{\mathrm{r}}(b_j)  \log\left(\frac{D_z f(\theta,b_j -\delta )}{ D_z f(\theta,b_j)} \right)\,, \quad 1\leq j \leq N\,. \\
       p_{\mathrm{r}}(b_j)  \log\left(\frac{D_z f(\theta,b_j  )}{ D_z f(\theta,b_j+\delta)} \right)\,, \quad N+1\leq j \leq 2N\,.
      \end{cases}
\end{equation}
In our experiment, we set $\delta=\varepsilon/2$ where $\varepsilon$ is the small offset we introduced in Section \ref{nn struct} during initialization. 
\subsubsection{Quadratic potential}

As a first example for the Fokker-Planck equation, we use the quadratic potential as a sanity check. Here $V(x)$ is chosen to be a quadratic function. This is one of the rare cases where the Fokker-Planck equation has a closed-form analytic solution. In Lagrangian coordinates, the trajectories of the particles follow the following SDE, commonly known as the Ornstein-Uhlenbeck process: 
\begin{equation}\label{eq:sde_OU}
    dX_t = -\gamma_0(X_t-\mu_0)dt + \sigma_0 dW_t\,.
\end{equation}
The corresponding Langevin equation for the density $p(t,x)$ is given by 
\begin{equation}\label{eq:langevin_OU}
    \frac{\partial p}{\partial t} = \gamma_0 \frac{\partial }{\partial x}\big((x-\mu_0)p \big ) + D \frac{\partial^2 p}{\partial x^2}\,,
\end{equation}
where $D = \sigma_0^2/2$. It can be shown that the solution to \eqref{eq:langevin_OU} is given by 
\begin{equation}\label{eq:solution_OU}
    p(t,x) = \sqrt{\frac{\gamma_0}{2\pi D (1-\mathrm{e}^{-2\gamma_0 t})}}\int_{-\infty}^{\infty} \mathrm{exp}\Big(-\frac{\gamma_0}{2D}\frac{(x-\mu_0-x' \mathrm{e}^{-\gamma_0 t})^2}{1-\mathrm{e}^{-2\gamma_0 t}}\Big)p_0(x')\, \mathrm{d}x' \,,
\end{equation}
where $p_0(x) = p(0,x)$ is the initial distribution. In our experiment, $p_0(x)$ is a standard Gaussian. Then \eqref{eq:solution_OU} implies that $p(t,x)$ is also Gaussian with mean $\mu_0(1-e^{-\gamma_0 t})$ and variance $e^{-2\gamma_0 t} + \frac{D(1-e^{-2\gamma_0 t})}{\gamma_0}$. Then the transport map is given by the optimal transport map between two Gaussians, which has a closed form expression. In this example, the transport map is 
\begin{equation}\label{eq:transport_map_OU}
    T(t,z) = \mu_0(1-e^{-\gamma_0 t}) + z \sqrt{e^{-2\gamma_0 t} + D(1-e^{-2\gamma_0 t})/\gamma_0 }\,,
\end{equation}
which is always a linear map, no matter the choice of $\mu_0$, $\gamma_0$ and $D$. We use $M=10^6$ particles sampled from a standard Gaussian distribution for approximating $\mathbb{E}_{ z\sim p_{\mathrm{r}}}\Big[\nabla_\theta V(f(\theta, z))+\nabla_\theta \hat U(\frac{p_{\mathrm{r}}(z)}{D_zf(\theta,z)})\Big]$. We choose $dt=10^{-3}$ and run for 1000 steps. We used a neural network with $m=32$ and $B=4$ following the setup in Section \ref{nn struct}. We have the following two choices of parameters corresponding to different dynamics. 
\begin{itemize}
    \item \emph{Moving and widening Gaussian.} We choose $\gamma_0=1$, $\mu_0=30$, $\sigma_0 = 4$. Under this setting, the solution at time $t$ is a Gaussian distribution with mean $30(1-e^{-t})$ and variance $e^{-2t} + 8(1-e^{-2t}) $. This evolution is shown on the left panel of Fig.~\ref{fig:FPK_2_density}. 
    \item \emph{Moving and shrinking Gaussian.} We choose $\gamma_0=1$, $\mu_0=10$, $\sigma_0 = 0.01$. Under this setting, the solution at time $t$ is a Gaussian distribution with mean $10(1-e^{-t})$ and variance $e^{-2t} + 5\times 10^{-5}(1-e^{-2t}) $. This evolution is shown on the right panel of Fig.~\ref{fig:FPK_2_density}.  
\end{itemize}
Our results are demonstrated in Fig.~\ref{fig:FPK_2_density}. As shown in Fig.~\ref{fig:FPK_2_density}, the computed density closely follows the analytic density of the Fokker-Planck equation from $t=0$ to $t=1$.
\begin{figure}
     \centering
     \begin{subfigure}[b]{0.4\textwidth}
         \centering
         \includegraphics[width=\textwidth]{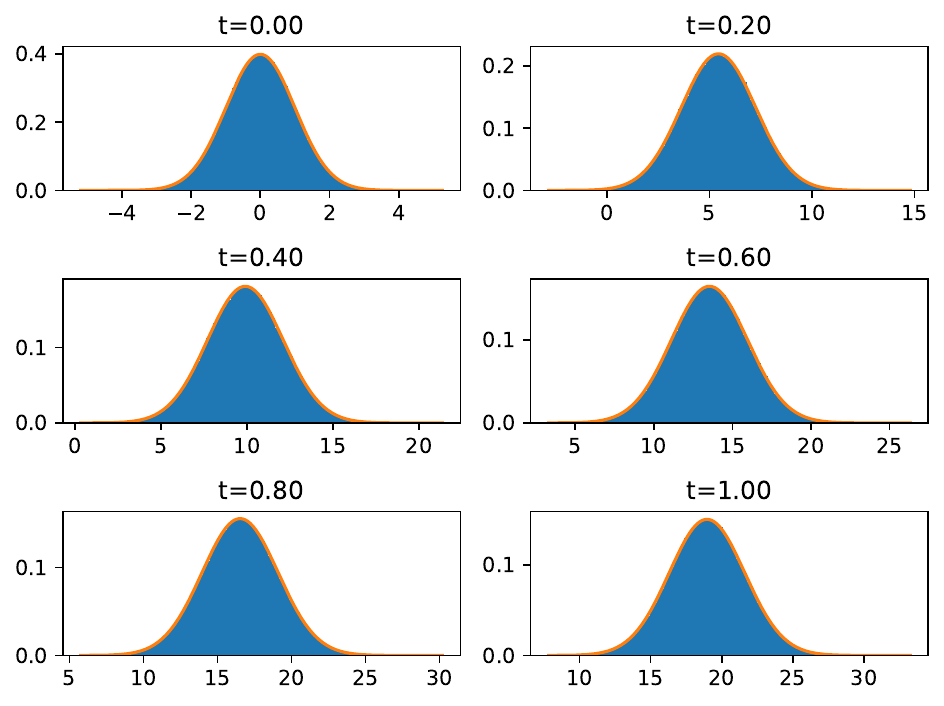}
         \caption{Moving, widening Gaussian}
     \end{subfigure}
     \begin{subfigure}[b]{0.4\textwidth}
         \centering
         \includegraphics[width=\textwidth]{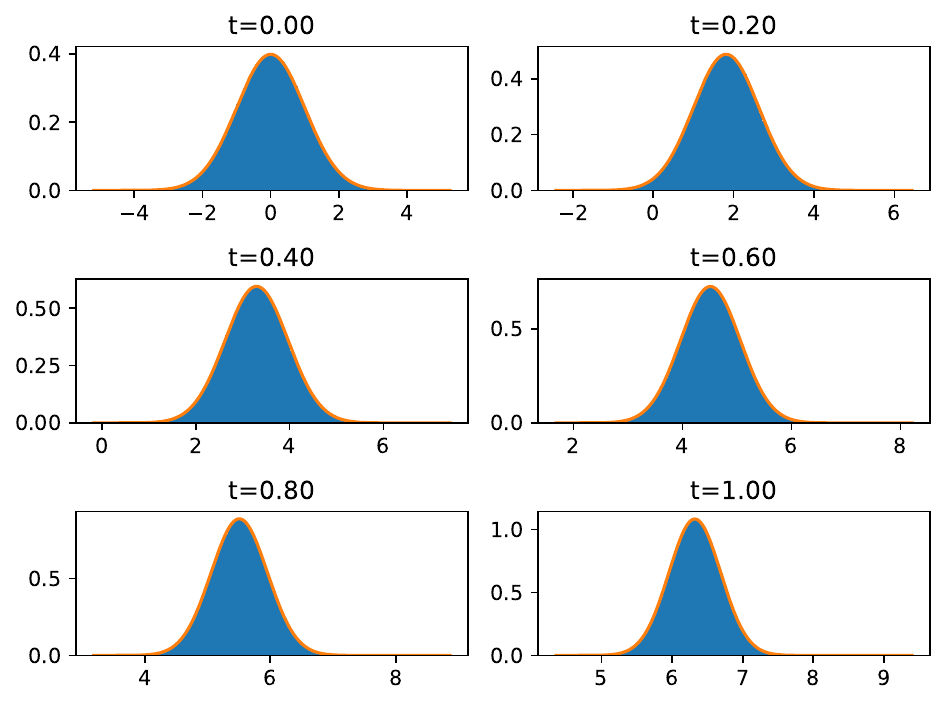}
         \caption{Moving, shrinking Gaussian}
     \end{subfigure}
        \caption{Density evolution of Eq.~\eqref{eq:langevin_OU}. Orange curve represents the solution given by Eq.~\eqref{eq:solution_OU}. Blue rectangles represent the histogram using $10^6$ particles in 100 bins from $t=0$ to $t=1$. Left panel: a Gaussian distribution shifting to the right with increasing variance. Right panel: a Gaussian distribution shifting to the right with decreasing variance. }\label{fig:FPK_2_density}
\end{figure}

\subsubsection{Quartic potential}
We consider $V(x) = (x-1)^4/4-(x-1)^2/2$. We choose $dt=2\times10^{-4}$ and run for 1000 steps. We compare our numerical results with the transport map computed from Eq.~\eqref{eq:trasnport_numerical}. The results are shown in Fig.~\ref{fig:FPK_power4}. In Fig.~\ref{fig:FPK_power4_err}, we observe a clear decrease in error when the number of neurons increases. In Fig.~\ref{fig:FPK_power4_map}, we plot a comparison between our computed Lagrangian map $f(\theta,z)$ vs the transport map computed from Eq.~\eqref{eq:trasnport_numerical} using a numerical solver. The evolution of the density is demonstrated in Fig.~\ref{fig:FPK_power4_density} from $t=0$ to $t=0.2$. 

\begin{figure}
 \centering
     \begin{subfigure}[b]{0.32\textwidth}
         \centering
         \includegraphics[width=\textwidth]{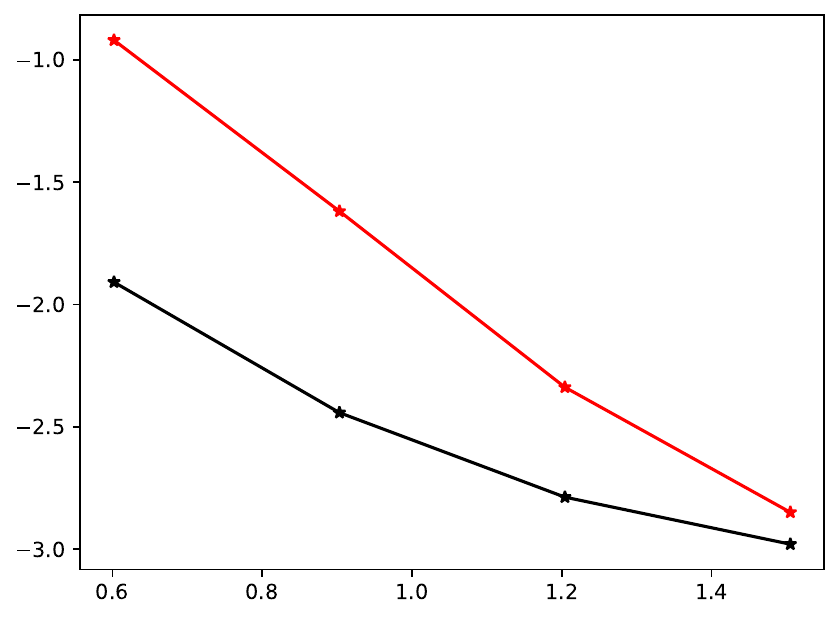}
         \caption{Error} \label{fig:FPK_power4_err}
     \end{subfigure}
     \centering
     \begin{subfigure}[b]{0.31\textwidth}
         \centering
         \includegraphics[width=\textwidth]{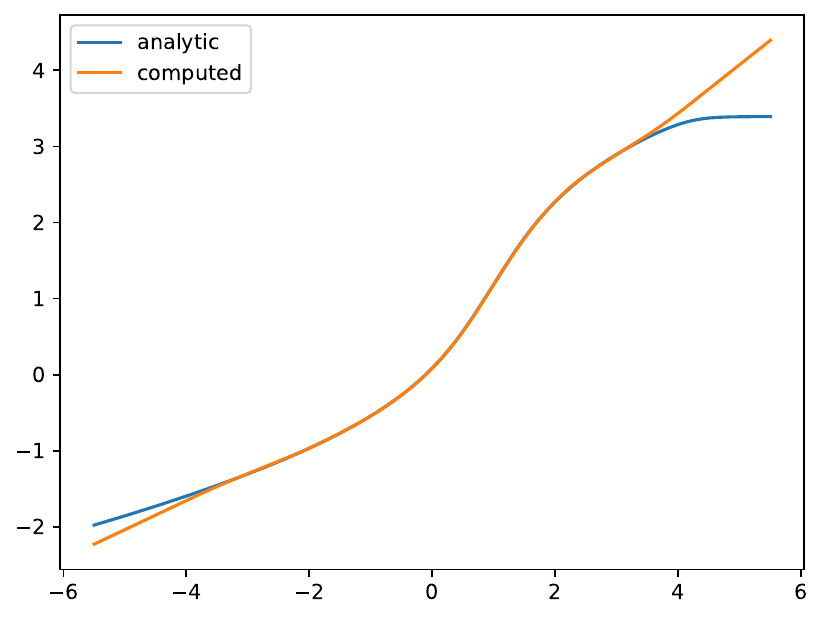}
         \caption{Mapping comparison} \label{fig:FPK_power4_map}
     \end{subfigure}
     \begin{subfigure}[b]{0.33\textwidth}
         \centering
         \includegraphics[width=\textwidth]{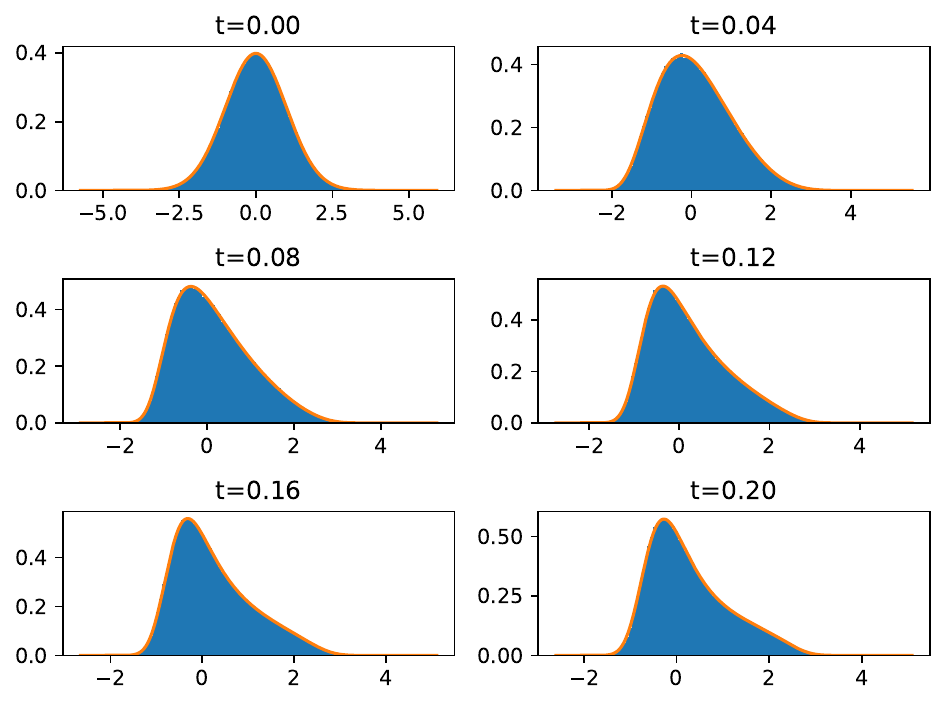}
         \caption{Density evolution} \label{fig:FPK_power4_density}
     \end{subfigure}
        \caption{Left: log-log plot of Fokker-Planck equation with a quartic polynomial potential. The y-axis represents $\log_{10}$ error defined by \eqref{eq:error_def}. x-axis represents $\log_{10}(N)$. The bias terms $b_i$ are initialized based on Section \ref{nn struct} with $B=4$. Red lines represent results when only the weights terms are updated. Black lines represent results when both weights and bias are updated. Middle: mapping comparison between $T(t,z)$ (using Eq.~\eqref{eq:trasnport_numerical}) and our computed solution $f(\theta_t,z)$. Right: density evolution of the Fokker-Planck equation with a quartic polynomial potential. Orange curve represents the density $p(t,x)$ computed by a numerical solver. Blue rectangles represent the histogram of $10^6$ particles in 100 bins from $t=0$ to $t=0.2$.  }
        \label{fig:FPK_power4}
\end{figure}

\subsubsection{Sixth order polynomial potential }
We consider $V(x)=(x-4)^6/6$. We choose $dt=10^{-6}$ and run for 1000 steps. We compare our numerical results with the transport map computed from Eq.~\eqref{eq:trasnport_numerical}. The results are shown in Fig.~\ref{fig:FPK_6}. We have observed similar behavior as in the case of linear transport PDE: the error becomes smaller when $N$ increases. Moreover, comparing dashed and solid lines in Fig.~\ref{fig:FPK_6_err} we see that as the initial mesh points (i.e. the $b_i$'s) concentrate nearer the center of our reference measure, the errors are smaller. In Fig.~\ref{fig:FPK_power6_map} we show a comparison between Lagrangian maps computed by our method and the numerical solver. We have also plotted the evolution of the density in Fig.~\ref{fig:FPK_power6_density} from $t=0$ to $t=10^{-3}$.

\begin{figure}[ht]
     \centering
     \begin{subfigure}[b]{0.32\textwidth}
         \centering
         \includegraphics[width=\textwidth]{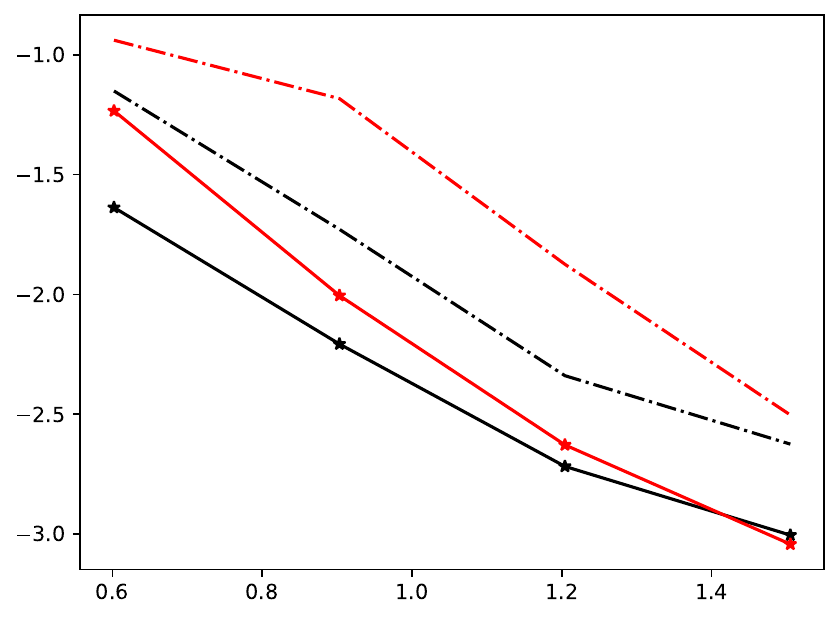}
         \caption{Error}\label{fig:FPK_6_err}
     \end{subfigure}
     \begin{subfigure}[b]{0.31\textwidth}
         \centering
         \includegraphics[width=\textwidth]{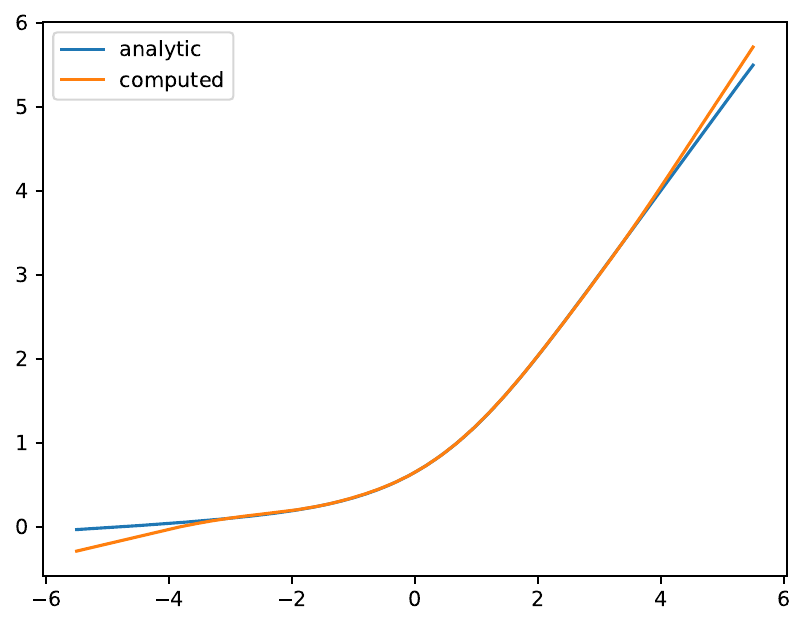}
         \caption{Mapping comparison} \label{fig:FPK_power6_map}
     \end{subfigure}
      \begin{subfigure}[b]{0.33\textwidth}
         \centering
         \includegraphics[width=\textwidth]{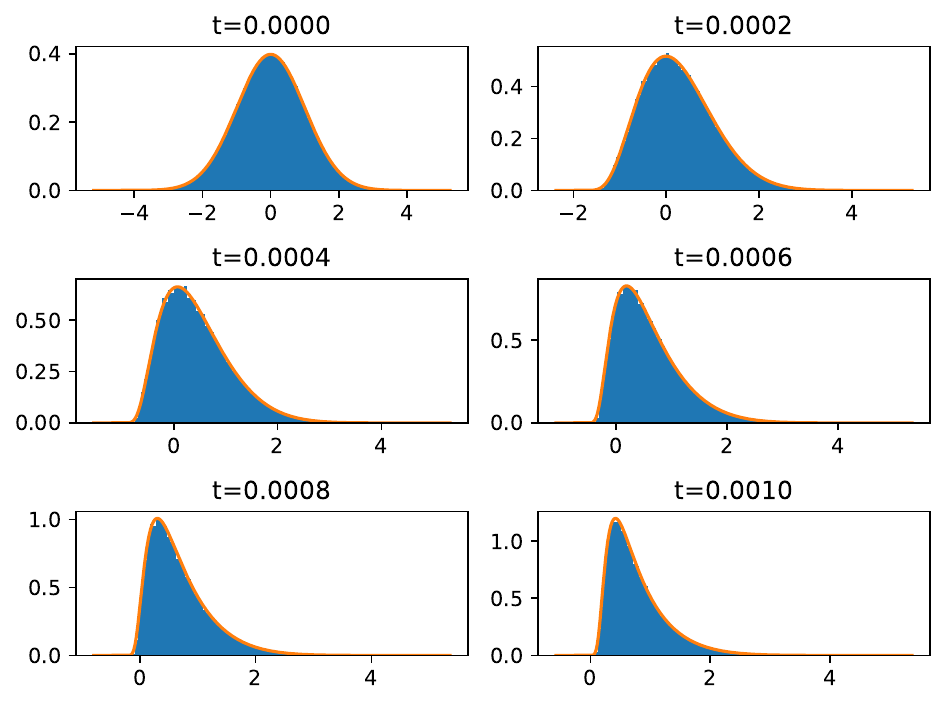}
         \caption{Density evolution} \label{fig:FPK_power6_density}
     \end{subfigure}
        \caption{Left: log-log plot of Fokker-Planck equation with a sixth order polynomial potential. The y-axis represents $\log_{10}$ error defined by \eqref{eq:error_def}. x-axis represents $\log_{10}(N)$. The bias terms $b_i$ are initialized based on Section \ref{nn struct} with $B=10$ for the dashed line and $B=4$ for the solid line. Red lines represent results when only the weights terms are updated. Black lines represent results when both weights and bias are updated. Middle: mapping comparison between $T(t,z)$ (using Eq.~\eqref{eq:trasnport_numerical}) and our computed solution $f(\theta_t,z)$. Right: density evolution of the Fokker-Planck equation with a sixth order polynomial potential. Orange curve represents the density $p(t,x)$ computed by a numerical solver. Blue rectangles represents the histogram of $10^6$ particles in 100 bins from $t=0$ to $t=10^{-3}$.  }
        \label{fig:FPK_6}
\end{figure}

\begin{figure}
\centering
     \begin{subfigure}[b]{0.32\textwidth}
         \centering
         \includegraphics[width=\textwidth]{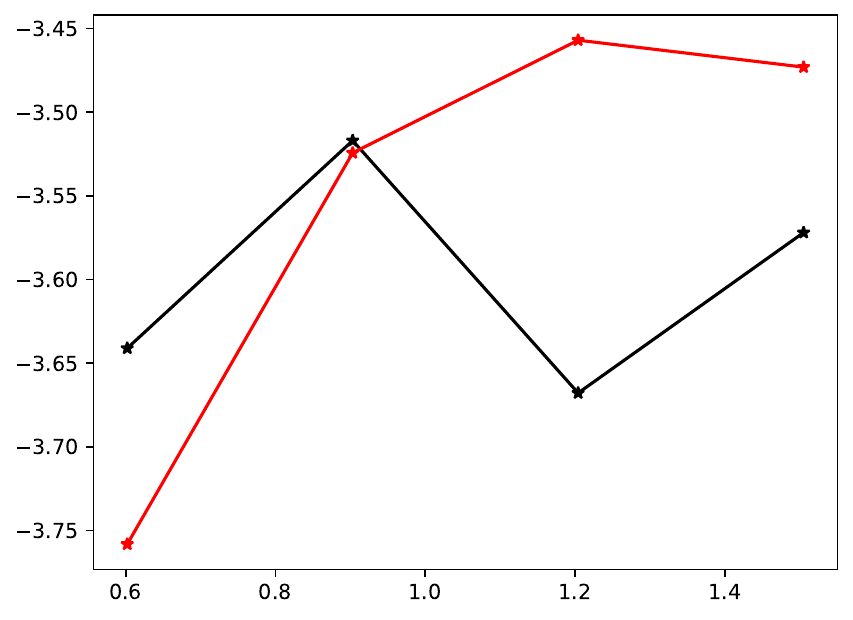}
         \caption{Error}\label{fig:porous_err}
     \end{subfigure}
     \centering
     \begin{subfigure}[b]{0.31\textwidth}
         \centering
         \includegraphics[width=\textwidth]{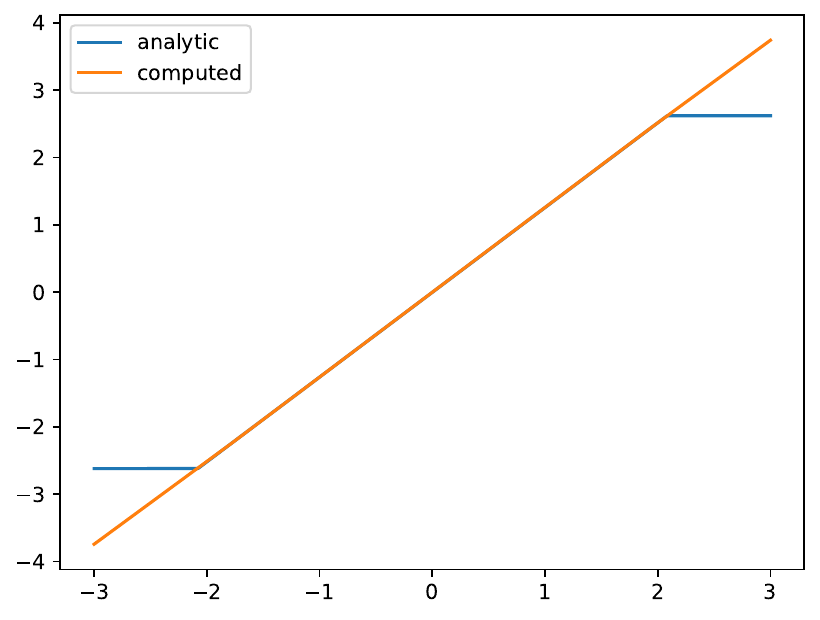}
         \caption{Mapping comparison}\label{fig:porous_map}
     \end{subfigure}
     \begin{subfigure}[b]{0.33\textwidth}
         \centering
         \includegraphics[width=\textwidth]{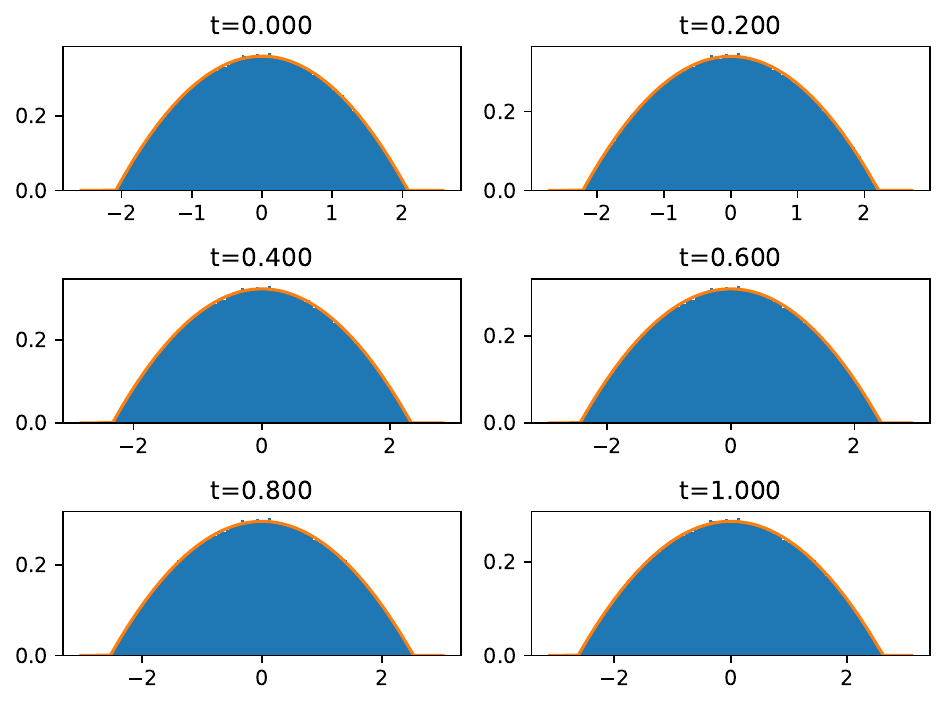}
         \caption{Density evolution}\label{fig:porous_density}
     \end{subfigure}
        \caption{Left: log-log plot of porous medium equation. The y-axis represents $\log_{10}$ error defined by \eqref{eq:error_def}. x-axis represents $\log_{10}(N)$. The bias terms $b_i$ are initialized based on Section \ref{nn struct} with $B=3^{2/3}$. Red lines represent results when only the weights terms are updated. Black lines represent results when both weights and bias are updated. Middle: mapping comparison between $T(t,z)$ (using Eq.~\eqref{eq:trasnport_numerical}) and our computed solution $f(\theta_t,z)$. Right: density evolution of the porous medium equation. Orange curve represents the density $p(t,x)$ given by Eq.~\eqref{eq:porous_medium}. Blue rectangles represent the histogram of $10^6$ particles in 100 bins from $t=0$ to $t=1$.}
        \label{fig:porous}
\end{figure}

\subsection{Porous Medium Equation}\label{sec:PM}
We consider Example \ref{ex6} with the functional $U(p(x)) = \frac{1}{m-1}p(x)^m$, $m>1$. This choice of $U$ yields the porous medium equation
\begin{equation}\label{eq:porous_medium}
    \partial_t p(t,x) = \Delta p(t,x)^m\,.
\end{equation}
It is known that Eq.~\eqref{eq:porous_medium} admits solutions given by the Barenblatt profile 
\begin{equation}\label{eq:barenblatt}
    p(t,x) = (t_0+t)^{-\alpha}\Big(C-\beta \norm{x}^2(t_0+t)^{-2\alpha/d}\Big)^{\frac{1}{m-1}}_+ \,,
\end{equation}
where $x\in \RR^d$, $\alpha=\frac{d}{d(m-1)+2}$, $\beta=\frac{(m-1)\alpha}{2dm}$, $t_0>0$ and $C$ is a normalization constant so that Eq.~\eqref{eq:barenblatt} integrates to 1 for all $t\geq0$. In this example, we consider the case when $m=2$. Then $\alpha=\frac{1}{3}$, $\beta=\frac{1}{12}$ and $C = \frac{3^{1/3}}{4}$. Eq.~\eqref{eq:barenblatt} also suggests that the support of the reference measure $p_0(x)=p(x,0)$ is bounded in $[-3^{2/3}t_0^{1/3},3^{2/3}t_0^{1/3}]$, which could help us with initializing the bias. More precisely, we cound initialize our $b_i$'s following Section \ref{nn struct} with $B=3^{2/3}t_0^{1/3}$. In our experiment, we set $t_0 =1$. We use $dt=10^{-3}$ and run for 1000 steps. We use $M=10^6$ particles sampled from $p(x,0)$ defined in Eq.~\eqref{eq:barenblatt} using importance sampling to approximate $\mathbb{E}_{z\sim p_{\mathrm{r}}}\Big[\nabla_\theta \hat U(\frac{p_{\mathrm{r}}(z)}{D_zf(\theta,z)})\Big]$, where $\hat{U}(p) = p$. Similar to the case of Fokker-Planck equation, special care needs to be taken when evaluating $\partial b_i \mathbb{E}_{z\sim p_{\mathrm{r}}}\Big[\hat U(\frac{p_{\mathrm{r}}(z)}{D_zf(\theta,z)})\Big]$. Using similar analysis from Section \ref{sec:FPK}, we have that 
\begin{equation}
    \partial b_i \mathbb{E}_{z\sim p_{\mathrm{r}}}\Big[\hat U(\frac{p_{\mathrm{r}}(z)}{D_zf(\theta,z)})\Big] = \begin{cases}
         \frac{p_{\mathrm{r}}(b_i)^2}{D_zf(\theta,b_i-\delta)}-\frac{p_{\mathrm{r}}(b_i)^2}{D_zf(\theta,b_i)},\quad 1\leq i \leq N\,. \\
        \frac{p_{\mathrm{r}}(b_i)^2}{D_zf(\theta,b_i)}-\frac{p_{\mathrm{r}}(b_i)^2}{D_zf(\theta,b_i+\delta)},\quad N+1\leq i \leq 2N\,.
    \end{cases}
\end{equation}
Our results are demonstrated in Fig.~\ref{fig:porous}. In Fig.~\ref{fig:porous_map}, \ref{fig:porous_density} we have $N=32$; both the bias and weights are updated. 

\begin{figure}
     \centering
     \begin{subfigure}[b]{0.4\textwidth}
         \centering
         \includegraphics[width=\textwidth]{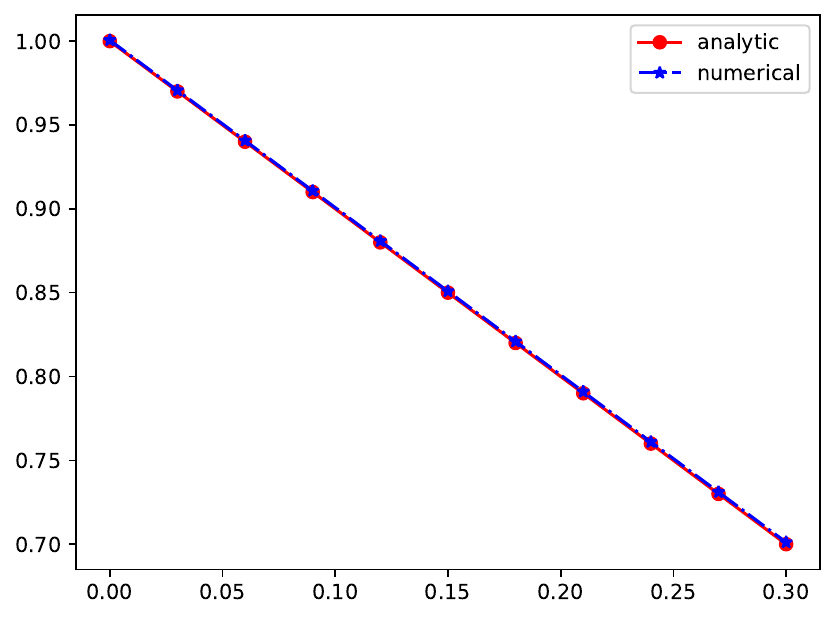}
         \caption{$\chi=1.5$}
     \end{subfigure}
     \begin{subfigure}[b]{0.4\textwidth}
         \centering
         \includegraphics[width=\textwidth]{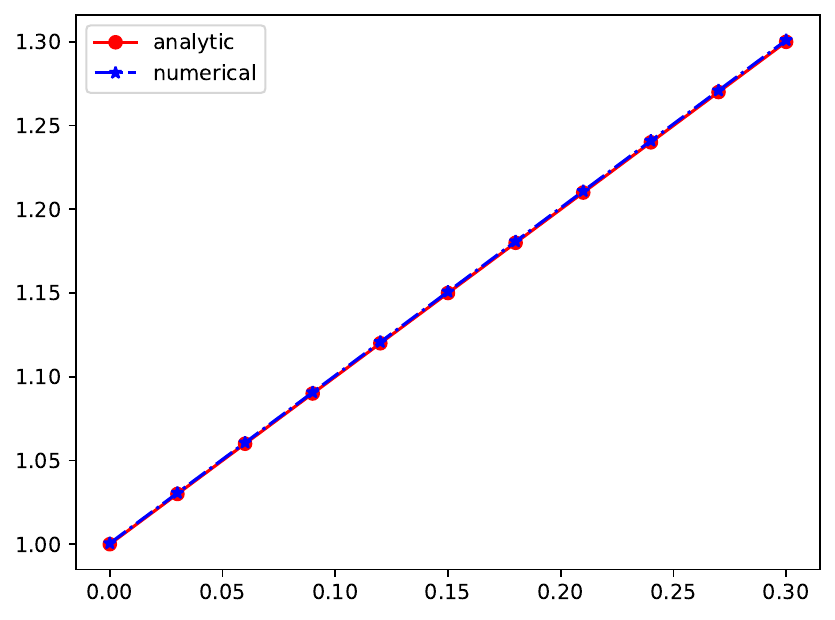}
         \caption{$\chi=0.5$}
     \end{subfigure}
        \caption{Second moment comparison between our numerical solution and analytic solution \eqref{eq:second_moment}. $x$-axis represents time.  }
        \label{fig:KS_2_moment}
\end{figure}

\begin{figure}
     \centering
     \begin{subfigure}[b]{0.4\textwidth}
         \centering
         \includegraphics[width=\textwidth]{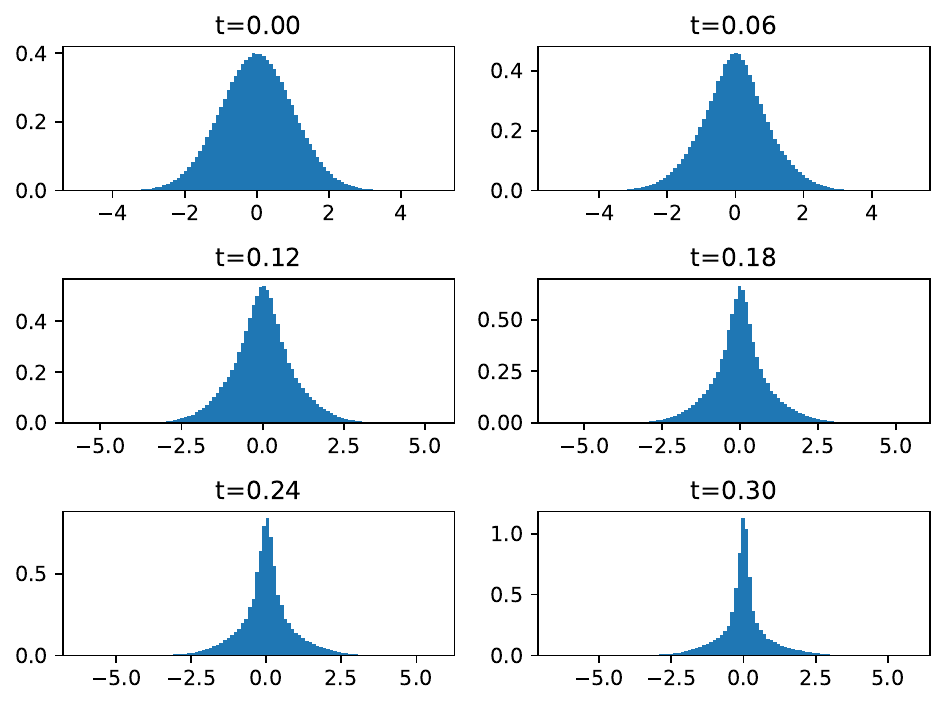}
         \caption{$\chi=1.5$}
     \end{subfigure}
     \begin{subfigure}[b]{0.4\textwidth}
         \centering
         \includegraphics[width=\textwidth]{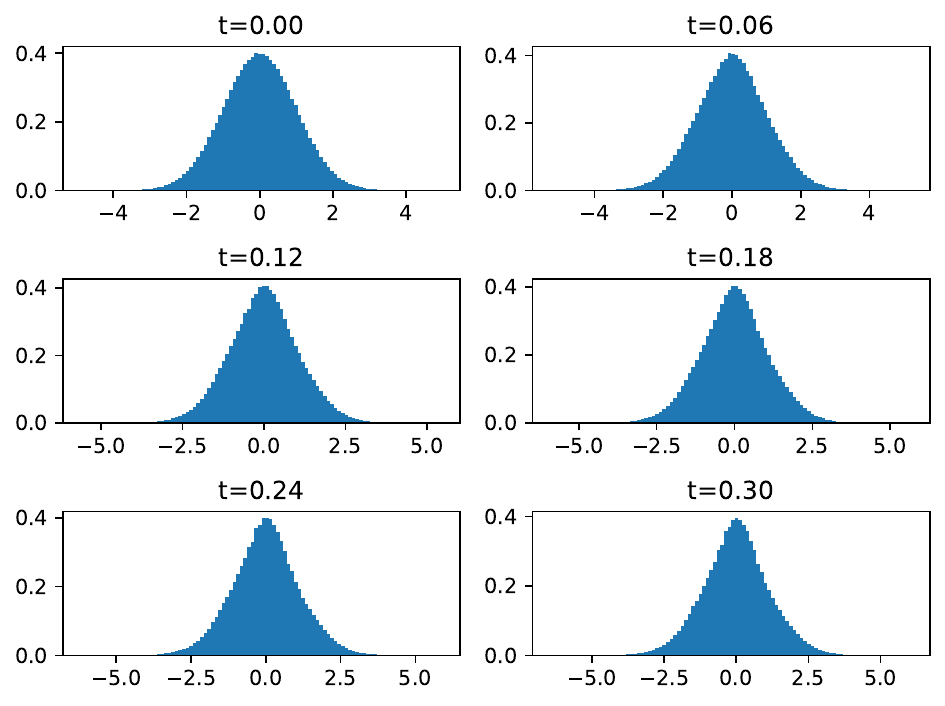}
         \caption{$\chi=0.5$}
     \end{subfigure}
        \caption{Density evolution of Keller-Segel equation with different $\chi$. Blue rectangles represent the histogram of $10^6$ particles in 100 bins from $t=0$ to $t=0.3$.  }
        \label{fig:KS_density}
\end{figure}
\begin{figure}
     \centering
     \begin{subfigure}[b]{0.4\textwidth}
         \centering
         \includegraphics[width=\textwidth]{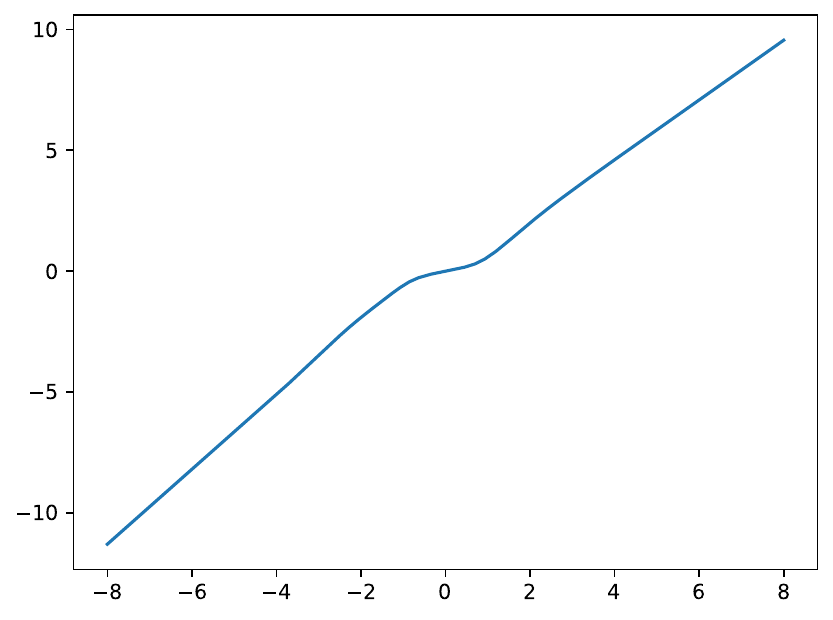}
         \caption{$\chi=1.5$}
     \end{subfigure}
     \begin{subfigure}[b]{0.4\textwidth}
         \centering
         \includegraphics[width=\textwidth]{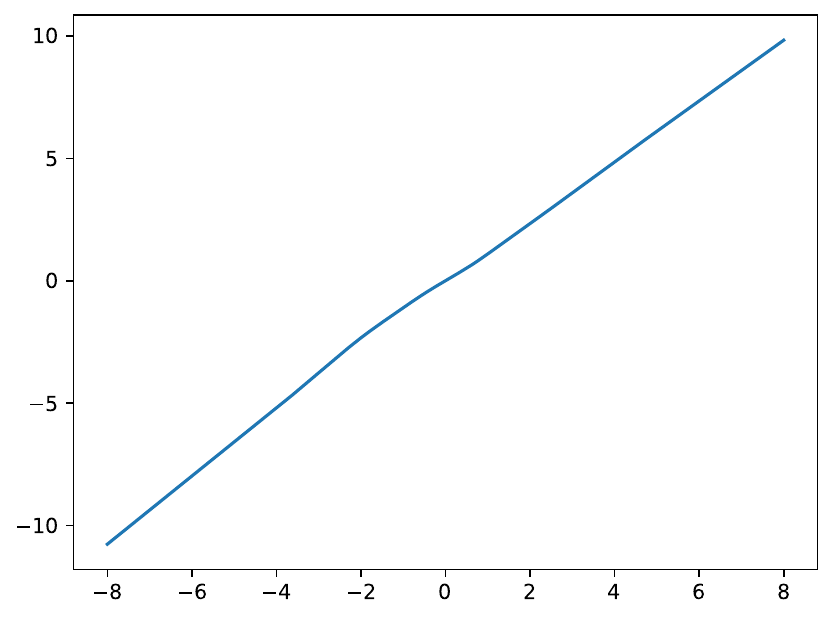}
         \caption{$\chi=0.5$}
     \end{subfigure}
        \caption{Lagrangian mapping of Keller-Segel equation with different $\chi$ at $t=0.3$. }
        \label{fig:KS_mapping}
\end{figure}
\subsection{Keller-Segel equation}
We consider the one-dimensional modified Keller-Segel equation, which is a combination of interaction energy in Example \ref{ex5} and  potential energy in Example \ref{ex6}: 
\begin{equation}
    \partial_t p(t,x) = \nabla \cdot \big( p(t,x) \nabla (U'(p) + W * p) \big),
\end{equation}
where $U(p) = p \log p$ and $W(x) = 2 \chi \log|x| $, $\chi>0$ is a constant. The second moment of $p(t,x)$ has an analytic form given by 
\begin{equation}\label{eq:second_moment}
    \mathbb E_{z\sim p(\cdot,t)} [z^2] = 2(1-\chi)t \, \mathbb E_{z\sim p(\cdot,0)} [z^2]\,.
\end{equation}
It is clear from Eq.~\eqref{eq:second_moment} that $\chi=1$ is a critical value. When $\chi>1$, the solution blows up as $t\to \infty$. So we consider two cases: $\chi=1.5$, and $\chi=0.5$. We present our results in Fig.~\ref{fig:KS_2_moment}, \ref{fig:KS_density} and \ref{fig:KS_mapping}. We used 2000 particles with a standard Gaussian initial distribution. We set $dt = 3\times10^{-4}$ and run for 1000 steps. The interaction term $W*p$ is evaluated using the 2000 particles with self-interaction excluded. We used a neural network with $N=32$ and $B=4$ following the setup and initialization in Section \ref{nn struct}. We update both the bias and weights terms in our experiment. 

\section{Discussion}
This paper analyzes the neural network projected dynamics for one-dimensional Wasserstein gradient flows of general energy functionals. For two-layer neural network functions with ReLU activations, we analyze the convergence and stability issues for the proposed numerical schemes from location parameter $b$ and scale parameter $a$. In numerical experiments, we demonstrate the second-order spatial domain accuracy as discussed in the numerical analysis. 

In future work, we shall study neural projected dynamics as a computational framework for building theoretical guaranteed machine learning numerical schemes. Various topics in this direction remain to be studied. First, we shall design neural network approximations to approximate the initial value of high-dimensional PDEs, which traditional PDE solvers cannot efficiently solve due to the curse of dimensionality. In particular, how can we understand the numerical accuracy of deep neural network functions in high dimensions when approximating PDEs?  
Next, we shall generalize the neural projected dynamics to dynamical systems for conservative-dissipative equations in statistical physics. The equation includes Hamiltonian structures induced from the conservative system and the related mean-field control problems. Furthermore, considering the closed relationship between the Wasserstein density manifold and sampling algorithms, we shall investigate sampling using the projected dynamics on neural parameter spaces and study their theoretical and statistical properties. We also consider the time-implicit (proximal-type) computations of the proposed algorithm \cite{lin2021wasserstein,li2019affine}, which could improve the performance and stability of the scheme.

\bibliographystyle{plain} 
\bibliography{Paper}

\end{document}